\numberwithin{equation}{section}
\numberwithin{figure}{section}
\numberwithin{table}{section}
\numberwithin{algorithm}{section}
\newcommand{\expt}[1]{\mathrm{E}\left[#1\right]}
\newcommand{\exptpibar}[1]{\mathrm{E}_{\bar{\pi}_{\ell}}\left[#1\right]}
\newcommand{\rset}{\mathbb{R}}
\newcommand{\nset}{\mathbb{N}}
\newcommand{\zset}{\mathbb{Z}}
\newcommand{\PERIOD}{.}
\newcommand{\COMMA}{,}
\newcommand{\Ordo}[1]{{\mathcal{O}}\left(#1\right)}
\newcommand{\ordo}[1]{{o}\left(#1\right)}
\def\BState{\State\hskip-\ALG@thistlm}
\title{Importance sampling for a robust  and efficient multilevel Monte Carlo estimator for  stochastic reaction networks} 
\author{Chiheb Ben Hammouda\thanks{King Abdullah University of Science and Technology (KAUST), Computer, Electrical and Mathematical Sciences \& Engineering Division (CEMSE), Thuwal $23955-6900$, Saudi Arabia ({\tt chiheb.benhammouda@kaust.edu.sa}).}
        \and Nadhir Ben Rached \thanks{Chair of Mathematics for Uncertainty Quantification, RWTH Aachen University, Aachen $52072$, Germany. ({\tt benrached@uq.rwth-aachen.de}).} 
\and  Ra\'ul Tempone\thanks{King Abdullah University of Science and Technology (KAUST), Computer, Electrical and Mathematical Sciences \& Engineering Division (CEMSE), Thuwal $23955-6900$, Saudi Arabia ({\tt raul.tempone@kaust.edu.sa}).} \thanks{Alexander von Humboldt Professor in Mathematics for Uncertainty Quantification, RWTH Aachen University, Aachen $52072$, Germany.}}
\begin{document}
	\date{}
\maketitle
\begin{abstract}
The multilevel Monte Carlo (MLMC) method for continuous-time Markov chains, first introduced by
Anderson and Higham \cite{Anderson2012}, is a highly efficient simulation technique that can be used to estimate various  statistical quantities for stochastic reaction networks (SRNs), in particular for stochastic biological systems. Unfortunately, the robustness and performance of the multilevel method can be affected  by the high kurtosis, a phenomenon  observed at the deep levels of MLMC, which leads to inaccurate estimates of  the sample variance. In this work, we address cases where the high-kurtosis phenomenon is due to \textit{catastrophic coupling} (characteristic of pure jump processes where coupled consecutive paths are identical in most of the simulations, while  differences only appear in a tiny proportion) and introduce a pathwise-dependent importance sampling (IS) technique that improves the robustness and efficiency of the multilevel method.   Our  theoretical results, along with the conducted numerical experiments, demonstrate that our proposed method significantly reduces  the kurtosis of the deep levels of MLMC,  and also improves the strong convergence rate from $\beta=1$ for the standard case (without IS), to $\beta=1+\delta$, where $0<\delta<1$ is a user-selected parameter in our  IS algorithm. Due to the complexity theorem of MLMC, and given  a pre-selected tolerance, $\text{TOL}$, this results in an improvement of the complexity from  $\Ordo{\text{TOL}^{-2} \log(\text{TOL})^2}$ in the standard case to $\Ordo{\text{TOL}^{-2}}$, which is the optimal complexity of the MLMC estimator. We achieve all these improvements with a  negligible additional cost since our IS algorithm is only applied a few times across each simulated path.
 
\

\textbf{Keywords} Multilevel Monte Carlo. Continuous-time Markov chains. Stochastic reaction networks. Stochastic biological systems. Importance sampling.

\textbf{2010 Mathematics Subject Classification} 		60H35. 60J27. 60J75. 92C40.
\end{abstract}

\thispagestyle{plain}

\setcounter{tocdepth}{1}

 \section{Introduction}
In this work, we propose a novel importance sampling (IS) algorithm that can be combined with the multilevel Monte Carlo (MLMC) estimator to numerically solve stochastic differential equations (SDEs) driven  by Poisson random measures \cite{li2007analysis,cinlar2011probability}. 

We  focus on a particular class of continuous-time Markov chains known as stochastic reaction networks (SRNs) (see Section \ref{sec:pjp} for a short introduction). SRNs describe the time evolution of biochemical reactions, epidemic processes \cite{brauer2001mathematical,anderson2015stochastic}, and transcription and translation in genomics and virus kinetics \cite{srivastava2002stochastic,hensel2009stochastic},  among other important applications. 

Let $\mathbf{X}$ be an SRN taking values in $\nset^d$ and defined in the time-interval $[0,T]$, where $T>0$ is a user-selected final time. We aim to provide accurate MLMC estimations of the expected value, $\expt{g(\mathbf{X}(T))}$,  where $g:\rset^d\to\rset$ is a given   scalar observable of $\mathbf{X}$. 

The main goal of our new proposed method is to improve the robustness  and performance of the MLMC estimator by i) solving the  high-kurtosis phenomenon encountered when using the multilevel method in the context of continuous-time Markov chains (see Section \ref{sec:The large kurtosis issue}), and ii) improving the complexity of the MLMC estimator by  increasing   the strong  convergence rate.

Many methods have been developed to simulate exact sample paths of SRNs;
for instance, the {stochastic simulation algorithm} (SSA) was
introduced by Gillespie in \cite{gillespie1976general} and the {modified next reaction method} (MNRM) was proposed by Anderson in \cite{anderson2007modified}.  Pathwise exact realizations of SRNs may be  computationally very costly when some reaction channels have high reaction rates. To overcome this issue, Gillespie \cite{gillespie2001approximate} and Aparicio and Solari  \cite{aparicio2001population} independently proposed the explicit tau-leap (TL) method  (see Section \ref{sec:exp_tau})  to simulate approximate paths of  $\mathbf{X}$ by evolving the process with fixed time steps, keeping the reaction rates fixed within each time step. Furthermore, other simulation schemes have been proposed to deal with situations with well-separated fast and slow time scales \cite{cao2005trapezoidal,rathinam2007reversible,abdulle2010chebyshev,ahn2013implicit,moraes2016multilevel_splitting,hammouda2017multilevel}.

To reduce the computational work needed to estimate  $\expt{g(\mathbf{X}(T))}$, Anderson and Higham \cite{Anderson2012} introduced the MLMC method \cite{giles2008multilevel,giles2015multilevel} based on the explicit TL scheme in the context of SRNs. Many extensions of the MLMC method have since been introduced to address other challenges. For instance, adaptive multilevel estimators \cite{lester2015adaptive,moraes2016multilevel,moraes2016multilevel_splitting} were proposed to improve the performance of non-adaptive estimators \cite{Anderson2012} to simulate SRNs with markedly different time scales. \cite{hammouda2017multilevel} extended \cite{Anderson2012} to  systems with slow and fast time scales, and introduced a hybrid multilevel estimator that uses an  implicit scheme for levels where explicit TL cannot be used due to numerical instability.

One important challenge encountered when using MLMC in the context of SRNs is the high-kurtosis phenomenon (see Section  \ref{sec:The large kurtosis issue} for more details), which may occur due to  either \textit{catastrophic coupling} (characteristic of pure jump processes where coupled consecutive paths are identical in most of the simulations, while  differences only appear in a tiny proportion; see Section \ref{sec:Catastrophic coupling} for more details) or \textit{catastrophic decoupling} (observed for general stochastic processes where terminal values of the sample paths of both coarse and fine levels become very different from each other; see Section  \ref{sec:Catastrophic decoupling} for more details). This  poor  behavior of the kurtosis affects the accurate estimation of the sample variance needed for the MLMC algorithm.  Consequently,  it affects the robustness and performance  of the multilevel estimator in many cases   (see Section \ref{sec:The large kurtosis issue} for the illustration of this issue). As of today,  few  works have addressed this issue; for instance, the authors in \cite{moraes2016multilevel} mentioned this issue and developed a more accurate estimator for the multilevel variance based on dual-weighted residual expansion techniques. In \cite{lester2018robustly}, a new method has been proposed to address the high-kurtosis phenomenon when it is due to \textit{catastrophic decoupling}, and  introduced a new approach of coupling consecutive levels of MLMC called  the \textit{common process method} (CPM), instead of using the  \textit{split propensity method} (SPM) proposed in \cite{Anderson2012}. The CPM is based on the use of common inhomogeneous Poisson processes for both coarse and fine sample paths.  Although the CPM improves the robustness and reliability of the multilevel estimator by dramatically decreasing  the kurtosis, it nonetheless incurs remarkable additional computational and memory costs  because  for each level  it requires  i) running the TL algorithm twice, and ii) storing the total number of times each Poisson process has fired over each time step. 

In the work presented here, compared to \cite{lester2018robustly}, we address cases of  high kurtosis observed in the MLMC estimator due to \textit{catastrophic coupling} and   propose a novel method that  provides a more robust multilevel estimator. We introduce a  pathwise-dependent IS technique to dramatically  decrease the high kurtosis caused by the  SPM strategy for coupling the paths of two consecutive levels. We should note that other IS  methods were proposed, in the context of biochemical systems and SRNs, but for the efficient estimation of rare events \cite{kuwahara2008efficient,daigle2011automated,cao2013adaptively}. Furthermore,  these IS  methods were combined with the MC method instead of the MLMC method that we present here.

We show that our  proposed method  not only improves the robustness of the multilevel estimator by significantly reducing  the kurtosis, but also  improves the strong convergence rate from $\beta=1$ for the standard case (without IS), to $\beta=1+\delta$, where $0<\delta<1$ is  a user-selected parameter in our  IS algorithm. Due to the complexity theorem of MLMC \cite{cliffe2011multilevel}, and given  a pre-selected tolerance, $\text{TOL}$, this results in an improvement of the complexity of MLMC from  $\Ordo{\text{TOL}^{-2} \log(\text{TOL})^2}$  to the optimal complexity, \ie,  $\Ordo{\text{TOL}^{-2}}$. We achieve all these improvements with a  negligible additional cost since our IS algorithm is only applied a few times across each simulated path.

Alternatively, the optimal MLMC complexity of order $\Ordo{\text{TOL}^{-2}}$ can be achieved by using  (i) MC with an exact scheme (for instance SSA), or (ii) an unbiased MLMC estimator \cite{Anderson2012}, where the deepest level is simulated with an exact scheme, or (iii) a biased hybrid MLMC estimator  \cite{moraes2016multilevel}, where the paths are simulated in a hybrid fashion that switches adaptively, based on the relative computational cost, between the TL and an exact method. Both approaches (i) and (ii)  incur a substantial  additional cost by introducing an exact scheme. This significant additional cost is not manifested in the rate exponent but in a large constant that deteriorates the actual complexity. Although our method is based on a biased MLMC estimator, without steps simulated with an exact scheme as in \cite{moraes2016multilevel}, it still achieves   a complexity of order $\Ordo{\text{TOL}^{-2}}$ with a smaller constant than those produced by   the methods (i), (ii) and (iii)  mentioned above. Compared to \cite{Anderson2012}, we suggest an orthogonal approach of lowering the complexity rate by improving the strong convergence rate, instead of removing the bias (weak error). Similarly to our work, the authors in \cite{moraes2016multilevel}  improve the strong convergence rate to reach the complexity of order $\Ordo{\text{TOL}^{-2}}$. However, compared to \cite{moraes2016multilevel}, we use a different strategy based on a  pathwise-dependent IS coupled with the TL scheme, instead of using a hybrid approach that switches between an exact and the TL scheme. 

We also propose a new approach to overcome the high-kurtosis phenomenon, which affects the robustness and  reliability of the MLMC estimator introduced in \cite{Anderson2012}. Although this issue can be addressed differently,  using the dual-weighted residual expansion techniques developed in \cite{moraes2016multilevel} in order to estimate more accurately the sample variance and bias on the deepest levels of MLMC, we believe that our approach has two main advantages over the approach in \cite{moraes2016multilevel}: first, our method is much simpler and  easier to generalize to other schemes, such as the split-step implicit TL scheme \cite{hammouda2017multilevel} where it is difficult  to get estimates using  the dual-weighted residual expansion techniques. Furthermore, although the approach in \cite{moraes2016multilevel} provides  a more accurate estimate of the variance than the sample variance estimate, there is still no clear analysis of how accurate (biased) those estimates are. The difficulty of establishing such analysis is  mainly  due to  the lack of sharp concentration inequalities for linear combinations of independent Poisson random variables (rdvs), as stated in Remark 4 in \cite{moraes2016multilevel}.  Finally, we should emphasize that  the hybrid scheme in \cite{moraes2016multilevel}   is an efficient algorithm that avoids   the simulated paths to take negative values, which is an undesirable consequence of the TL approximation. In this case, for problems where we are close to the boundary, combining the two approaches (our approach and the approach in \cite{moraes2016multilevel}) may lead to  more  efficient results.

This work is structured  as follows: we start by giving an overview of concepts used in this work such as SRNs (Section \ref{sec:pjp}), explicit TL approximation (Section \ref{sec:exp_tau}), and the MLMC method (Section \ref{sec:MLMC)}). Then, in Section \ref{sec:The large kurtosis issue}, we explain the high-kurtosis phenomenon along with its  leading causes in the context of SRNs. In Sections  \ref{sec:Idea}, \ref{sec:main results} and, \ref{sec:About the choice of the new measure and the importance sampling algorithm}, we present the details of our IS algorithm that we combine with the MLMC method. We start by presenting in Section \ref{sec:Idea} the motivation of our idea by the sampling under an optimal measure for simulating SRNs. Then, in Section \ref{sec:main results}, we present a summary of the main results of this work, and  in Section \ref{sec:About the choice of the new measure and the importance sampling algorithm}, we analyze our proposed IS algorithm and state the main convergence theorems related to the kurtosis and the variance estimates of our approach. Furthermore, we  present, in the same section, a cost analysis of the MLMC methods presented in this work, with and without IS.  Before concluding,  we show,  in Section \ref{sec:num_experiments}, the results obtained through the  numerical experiments conducted across different examples of SRNs.
\subsection{Stochastic Reaction Networks (SRNs)}
\label{sec:pjp}
We are interested in the time evolution of a homogeneously mixed  chemical reacting system described by the Markovian pure jump process,  $\mathbf{X}:[0,T]\times \Omega \to \nset^d$, where ($\Omega$, $\mathcal{F}$, $P$) is a probability space. In this framework, we assume that $d$ different species interact through $J$ reaction channels. 
The $i$-th component,  $X^{(i)}(t)$, describes the abundance of the $i$-th species present in the chemical system at time $t$. This work aims to  study the time evolution of the state vector, 
\begin{equation*}
 \mathbf{X}(t) = (X^{(1)}(t), \ldots, X^{(d)}(t)) \in
  \nset^d \PERIOD
\end{equation*}
Each reaction channel, $\mathcal{R}_j$, is a pair $(a_j, \boldsymbol{\nu}_{j})$ defined by its propensity function, $a_{j}:\rset^{d} \rightarrow \rset_{+}$, and its state change vector, $ \boldsymbol{\nu}_{j}=( \nu_{j,1},\nu_{j,2},..., \nu_{j,d})$, satisfying\footnote{Hereafter,  we use $\text{Prob}\left(A;B\right)$ and  $\expt{A;B}$ to denote the conditional probability and  conditional  expectation of $A$ given $B$, respectively.}
\begin{align}\label{reaction_channel}
 \text{Prob}\left(\mathbf{X}(t+ \Delta t)=\mathbf{x}+ \boldsymbol{\nu}_{j} ; \mathbf{X}(t)=\mathbf{x}\right)=a_{j}(\mathbf{x})\Delta t + \ordo{\Delta t}, \: j=1,2,...,J  \PERIOD 
\end{align}
Formula   \eqref{reaction_channel} states that the probability of observing a jump in the process, $\mathbf{X}$, from  state $\mathbf{x}$ to  state $\mathbf{x} +  \boldsymbol{\nu}_{j}$, a consequence of the firing of  reaction $\mathcal{R}_{j}$ during a small time interval, $(t, t + \Delta  t]$, is proportional to the length of the time interval, $\Delta  t$, with $a_{j}(\mathbf{x})$  as the constant of proportionality. 

We set $a_j(\mathbf{x}){=}0$ for  $\mathbf{x}$ such that $\mathbf{x}{+}\boldsymbol{\nu}_j\notin \nset^d$ (\emph{the non-negativity assumption}: the system can never produce negative population values). 

As a consequence of relation (\ref{reaction_channel}), the process $\mathbf{X}$  is a continuous-time, discrete-space Markov chain that can be characterized by the random time change representation of Kurtz \cite{kurtz_2005}
\begin{equation}
  \label{eq:exact_process}
\mathbf{X}(t)= \mathbf{x}_{0}+\sum_{j=1}^{J} Y_j \LP \int_0^t  a_{j}(\mathbf{X}(s)) \, \ud s \RP \boldsymbol{\nu}_j  \COMMA
\end{equation}
where $Y_j:\rset_+{\times} \Omega \to \nset$ are independent unit-rate Poisson
processes. 
Conditions on the  reaction channels  can be imposed to ensure uniqueness  \cite{anderson2015stochastic} and to avoid explosions in finite time \cite{engblom2012stability,rathinam2013moment,gupta2014scalable}.

We emphasize that, by using the \textit{stochastic mass-action kinetics}
principle,  we assume that the propensity function, $a_j(.)$, for a reaction channel $\mathcal{R}_j$, represented by the following  diagram\footnote{$\alpha_{j,i}$ molecules of the species $S_i$ are consumed and $\beta_{j,i}$ are produced. Thus, $(\alpha_{j,i},\beta_{j,i}) \in \nset^2$ but $\beta_{j,i}-\alpha_{j,i}$, can be a negative integer, constituting the vector $\boldsymbol{\nu}_j=\left(\beta_{j,1}-\alpha_{j,1},\dots,\beta_{j,d}-\alpha_{j,d}\right) \in \zset^d$.}
\begin{equation*}
\alpha_{j,1} S_1+\dots+\alpha_{j,d} S_d \overset{\theta_j}{\rightarrow}\beta_{j,1} S_1+\dots+\beta_{j,d} S_d \COMMA
\end{equation*}
obeys the following  relation 
\begin{equation}\label{eq:prop_dynamics}
a_j(\mathbf{x}):=\theta_j \prod_{i=1}^d \frac{x_i!}{(x_i-\alpha_{j,i})!} \mathbf{1}_{\{x_i\ge \alpha_{j,i}\}}\COMMA
\end{equation}
where $\{\theta_j\}_{j=1}^J$ are positive constant reaction rates, $x_i$ is the counting number of the species $S_i$, and $\mathbf{1}_{\mathcal{A}}$ is the indicator function of the set $\mathcal{A}$.

\subsection{The Explicit Tau-Leap (Explicit-TL) Approximation}
\label{sec:exp_tau}
The explicit-TL scheme is a pathwise-approximate method independently introduced in \cite{gillespie2001approximate} and \cite{aparicio2001population} to overcome the computational drawback of exact methods, \ie, when many reactions fire during a short time interval. This scheme can be derived from the random time change representation of Kurtz  \eqref{eq:exact_process} by approximating the integral $\int_{t_i}^{t_{i+1}} a_{j}(\mathbf{X}(s)) \ud s $ by $a_j(\mathbf{X}(t_i))\,(t_{i+1}-t_i)$, \ie, using the forward-Euler method with a time mesh  $\{t_{0}=0, t_{1},...,t_{N}= T\}$. In this way, the  \nameexp approximation of $\mathbf{X}$ should satisfy for $k\in\{1,2,\ldots,N\}$
\begin{equation*}\label{approx}
\mathbf{Z}(t_{k}) = \mathbf{x}_{0}+\sum_{j=1}^{J} Y_{j} \LP  \sum_{i=0}^{k-1} a_{j}(\mathbf{Z}(t_i))(t_{i+1}-t_{i}) \RP   \boldsymbol{\nu}_{j} \PERIOD
\end{equation*}
Given a uniform time mesh of size $\Delta t$ and $\mathbf{Z}(t_0) := \mathbf{x}_{0}$, we  simulate a path of $\mathbf{Z}$ as follows
\begin{equation*}
\mathbf{Z}(t_k):=\mathbf{z}+\sum_{j=1}^{J} \mathcal{P}_{j}(a_{j}(\mathbf{z}) \Delta t)  \boldsymbol{\nu}_{j} \COMMA \: 1 \le k \le N,
\end{equation*}
iteratively, where $\mathbf{z}=\mathbf{Z}(t_{k-1})$ and $\{\mathcal{P}_{j}(r_j)\}_{j=1}^J $ are independent Poisson rdvs with respective rates, $r_j$. Note that the  \nameexp path, $\mathbf{Z}$, is defined only at the points of the time mesh, but it can be naturally extended to $[0,T]$  as a piecewise constant path.

\subsection{The Multilevel Monte Carlo (MLMC) Method}
\label{sec:MLMC)}
Let $\mathbf{X}$ be a stochastic process and $g: \rset ^{d} \rightarrow \rset$ a  scalar observable. 
Let us assume that we want to approximate $\expt{g(\mathbf{X}(T))}$, but instead of sampling directly from $\mathbf{X}(T)$, we sample from $\mathbf{Z}_{\Delta t}(T)$, which are rdvs generated by an approximate method with step size $\Delta t$.  
Let us also assume  that the variates $\mathbf{Z}_{\Delta t}(T)$ are generated with an algorithm with weak order, $\Ordo{\Delta t}$, \ie, $\expt{g(\mathbf{X}(T))- g(\mathbf{Z}_{\Delta t}(T) )}= \Ordo{\Delta t}$.\footnote{We  refer to \cite{li2007analysis} for the underlying assumptions and proofs of this statement, in the context of the TL scheme.}
 
Let $\mu_{M}$ be the standard Monte Carlo estimator  of $\expt{g(\mathbf{Z}_{\Delta t}(T))}$ defined by
\begin{equation*}
\mu_{M} :=\frac{1}{M}\sum_{m=1}^{M} g(\mathbf{Z}_{\Delta t,[m]}(T))\COMMA
\end{equation*}
where $\{\mathbf{Z}_{\Delta t,[m]}(T)\}_{m=1}^M$ are independent and distributed as $\mathbf{Z}_{\Delta t}(T)$. 

We define the global error of the MC estimator as  $ \left(\expt{\left(\expt{g(\mathbf{X}(T))} - \mu_{M}\right)^2}\right)^{\frac{1}{2}}$. Then, we write the following error decomposition
\begin{equation*}
\expt{\left(\expt{g(\mathbf{X}(T))} - \mu_{M}\right)^2} = \underset{ \text{squared bias}}{\underbrace{\left( \expt{g(\mathbf{X}(T))- g(\mathbf{Z}_{\Delta t}(T) )} \right)^2 }}+ \underset{\text{Variance}}{\underbrace{\left(\expt{g(\mathbf{Z}_{\Delta t}(T) )} - \mu_{M} \right)^2}} \PERIOD
\end{equation*}
To achieve the desired accuracy, $\text{TOL}$, it is sufficient to take  $\Delta t=\Ordo{\text{TOL}}$ so that the bias  is $\Ordo{\text{TOL}}$ and  impose $M=\Ordo{\text{TOL}^{-2}}$ so that the variance is $\Ordo{\text{TOL}}$ \cite{duffie1995efficient}.  
As a consequence, the expected total computational work is $\Ordo{\text{TOL}^{-3}}$.

The MLMC estimator, introduced by Giles \cite{giles2008multilevel} (see also \cite{kebaier2005statistical} for the two-level construction), allows us to reduce the total computational work  up to  $\Ordo{	\text{TOL}^{-2-	\max\left(0, \frac{\gamma-\beta}{\alpha}\right)} \log\left(\text{TOL}\right)^{2 \times \mathbf{1}_{\{\beta=\gamma\}}}}$, where  $(\alpha,\beta,\gamma)$ are weak, strong, and work rates, respectively (see Theorem \ref{thm:MLMC_comlexity} for more details).  The basic idea of MLMC is to generate,  and couple in a clever manner, paths with different step sizes.
%, which results in
%\begin{itemize}
%\item[i)] Stochastically coordinated sequences of paths having different step sizes where the paths with large step sizes are computationally less expensive than those with tiny step sizes.
%\item[ii)] For deeper levels of MLMC (smaller time steps),  a decreasing variance of the difference between two coupled paths, implying significantly fewer samples in the estimation (see \eqref{MLMC1}).
%\end{itemize}
We can construct the MLMC estimator as follows: consider a hierarchy of nested meshes of the time interval $[0,T]$, indexed by $\ell=0, 1,\dots, L$. We denote by $\Delta t_{0}$ the step size used at level $\ell=0$. The size of the subsequent time steps for levels $\ell \geq 1$ is given by $\Delta t_{\ell}=K^{-\ell} \Delta t_{0}$, where $K{>}1$ is a given integer constant. In this work, we take $K = 2$. Furthermore, we denote by $M_{\ell}$ the number of samples per level in the MLMC estimator. To simplify the notation,  hereafter $\mathbf{Z}_{\ell}$ denotes the approximate process generated using a step size of $\Delta t_{\ell}$.

Consider now the following telescoping decomposition of $\expt{g(\mathbf{Z}_{L}(T))}$
\begin{align}\label{MLMC1}
\expt{g(\mathbf{Z}_{L}(T))}&= \expt{g(\mathbf{Z}_{0}(T))}+ \sum_{\ell=1}^{L} \expt{g(\mathbf{Z}_{\ell}(T))- g(\mathbf{Z}_{\ell-1}(T))} \\
&\: \text{Var}[g(\mathbf{Z}_{0}(T))]  \gg   \text{Var}[g(\mathbf{Z}_{\ell}(T))- g(\mathbf{Z}_{\ell-1}(T))] \searrow \: \text{as} \: \ell \nearrow \nonumber \\ 
	&\quad M_0   \: \quad  \quad \quad   \quad \gg M_{\ell} \searrow \: \text{as} \: \ell \nearrow \PERIOD \nonumber 
\end{align} 
Then, by defining 
\begin{align}\label{eq:MLMC_MC_estimators}
\begin{cases} 
\hat{Q}_{0}:= \frac{1}{M_{0}} \sum\limits_{m_{0}=1}^{M_{0}} g(\mathbf{Z}_{0,[m_{0}]}(T)) \\ 
\hat{Q}_{\ell}:= \frac{1}{M_{\ell}} \sum\limits_{m_{\ell}=1}^{M_{\ell}}  \LP g(\mathbf{Z}_{\ell,[m_{\ell}]}(T))-g(\mathbf{Z}_{\ell-1,[m_{\ell}]}(T)) \RP \COMMA \\
\end{cases}
\end{align}
we arrive at the unbiased MLMC estimator, $\hat{Q}$, of  $\expt{g(\mathbf{Z}_{L}(T))}$
\begin{equation}\label{eq:MLMC_estimator}
\hat{Q}:= \sum\limits_{\ell=0}^{L} \hat{Q}_{\ell}\PERIOD
\end{equation}
We note that the key point here is that both $\mathbf{Z}_{\ell,[m_{\ell}]}(T)$ and $\mathbf{Z}_{\ell-1,[m_{\ell}]}(T)$ are sampled using different time discretizations but with the same generated randomness. 

Theorem \ref{thm:MLMC_comlexity} from \cite{cliffe2011multilevel} states the computational complexity of the MLMC estimator for different scenarios:
\begin{theorem}[MLMC complexity]\label{thm:MLMC_comlexity}
Let $g:=g\left(\mathbf{X}\right)$ denote a rdv, and let $g_{\ell}:=g\left(\mathbf{Z}_{\ell}\right)$ denote the corresponding level $\ell$ numerical approximation. If there exist independent estimators $\hat{Q}_{\ell}$ based on $M_{\ell}$ Monte Carlo samples, each with expected cost $W_{\ell}$ and variance $V_{\ell}$, and positive constants $\alpha$ (weak convergence rate), $\beta$ (strong convergence rate), $\gamma$ (work rate), $c_1$, $c_2$, $c_3$ such that $\alpha \ge \min(\beta,\gamma)$ and
\begin{enumerate}
\item[i)] $\abs{\expt{g_{\ell}-g}}\le c_1 2^{-\alpha \ell}$
\item[ii)] $\expt{\hat{Q}_{\ell}}=\begin{cases}
               \expt{g_{0}},\quad \ell=0\\
               \expt{g_{\ell}-g_{\ell-1}},\quad \ell>0
            \end{cases}$
\item[iii)] $V_{\ell}:= \text{Var}\left[g_{\ell}-g_{\ell-1}\right] \le c_2 2^{-\beta \ell}$
\item[iv)]  $W_{\ell} \le c_3 2^{\gamma \ell}$,
\end{enumerate}
then there exists a positive constant $c_4$ such that for any $\text{TOL}<e^{-1}$, there are
values $L$ and $M_{\ell}$ for which the multilevel estimator
\begin{equation*}
\hat{Q}=\sum_{\ell=0}^L \hat{Q}_{\ell},
\end{equation*}
has a mean-square-error with bound 
\begin{align*}
\expt{\left(\hat{Q}-\expt{g}\right)^2} < \text{TOL}^2,
\end{align*}
with  a computational complexity $W$ with bound
\begin{align}
\expt{W}=\begin{cases}
               c_4 \text{TOL}^{-2},\quad  &\beta>\gamma, \nonumber \\
               c_4 \text{TOL}^{-2} \left(\log(\text{TOL})\right)^2,\quad  &\beta=\gamma, \nonumber \\
               c_4 \text{TOL}^{-2-\frac{\gamma-\beta}{\alpha}},\quad  &\beta<\gamma.
            \end{cases}
\end{align}
\end{theorem}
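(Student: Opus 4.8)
The plan is to follow the classical MLMC complexity argument of Giles and of Cliffe et al.: split the mean-square error into squared bias and variance, choose the number of levels $L$ so that the bias is controlled, and then choose the sample sizes $M_\ell$ to control the variance at minimal cost. First I would use the telescoping property (assumption (ii)) together with the mutual independence of the level estimators $\hat{Q}_\ell$ to write
\begin{equation*}
\expt{\left(\hat{Q}-\expt{g}\right)^2}=\left(\expt{g_L}-\expt{g}\right)^2+\sum_{\ell=0}^{L}\frac{V_\ell}{M_\ell}\COMMA
\end{equation*}
and bound the first term by $c_1^2\,2^{-2\alpha L}$ via (i). Taking $L$ to be the smallest integer with $c_1 2^{-\alpha L}<\text{TOL}/\sqrt{2}$ makes the squared bias strictly below $\text{TOL}^2/2$; this forces $L=\Ordo{\log(\text{TOL}^{-1})}$ and, crucially, $2^{L}=\Ordo{\text{TOL}^{-1/\alpha}}$, a bound I will reuse repeatedly.

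Next, for that fixed $L$, I would treat the $M_\ell$ as continuous variables and minimize the expected cost $\sum_\ell M_\ell W_\ell$ subject to $\sum_\ell V_\ell/M_\ell\le\text{TOL}^2/2$ by a Lagrange-multiplier computation, obtaining the standard optimal allocation $M_\ell\propto\sqrt{V_\ell/W_\ell}$. Rounding up, I would set $M_\ell=\left\lceil 2\,\text{TOL}^{-2}\sqrt{V_\ell/W_\ell}\sum_{k=0}^{L}\sqrt{V_kW_k}\right\rceil$. With this choice the variance term is at most $\text{TOL}^2/2$, so the total mean-square error is $<\text{TOL}^2$, while
\begin{equation*}
\expt{W}\le 2\,\text{TOL}^{-2}\left(\sum_{\ell=0}^{L}\sqrt{V_\ell W_\ell}\right)^{2}+\sum_{\ell=0}^{L}W_\ell\COMMA
\end{equation*}
where the second term is the price paid for the ceilings.

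Then I would insert (iii)–(iv): $\sqrt{V_\ell W_\ell}\le\sqrt{c_2c_3}\,2^{(\gamma-\beta)\ell/2}$, so $\sum_{\ell=0}^{L}\sqrt{V_\ell W_\ell}$ is a geometric series with ratio $2^{(\gamma-\beta)/2}$, which I would bound case by case: it is $\Ordo{1}$ when $\beta>\gamma$, it is $\Ordo{L}=\Ordo{\log(\text{TOL}^{-1})}$ when $\beta=\gamma$, and it is $\Ordo{2^{(\gamma-\beta)L/2}}=\Ordo{\text{TOL}^{-(\gamma-\beta)/(2\alpha)}}$ when $\beta<\gamma$. Squaring and multiplying by $\text{TOL}^{-2}$ reproduces the three advertised rates $\text{TOL}^{-2}$, $\text{TOL}^{-2}\log(\text{TOL})^2$, and $\text{TOL}^{-2-(\gamma-\beta)/\alpha}$. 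It then remains to check that the correction $\sum_{\ell=0}^{L}W_\ell\le c_3\sum_{\ell=0}^{L}2^{\gamma\ell}=\Ordo{2^{\gamma L}}=\Ordo{\text{TOL}^{-\gamma/\alpha}}$ is always dominated by the main term: this is precisely where the hypothesis $\alpha\ge\min(\beta,\gamma)$ is used. When $\beta\ge\gamma$ it gives $\gamma/\alpha\le 1<2$; when $\beta<\gamma$ it gives $\beta\le\alpha$, hence $\gamma/\alpha\le 2+(\gamma-\beta)/\alpha$. Collecting all constants yields the claimed $c_4$.

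The argument is conceptually routine, so the \emph{main obstacle} is bookkeeping rather than a genuine difficulty: (a) verifying that the ceiling-induced term $\sum_\ell W_\ell$ never dominates, which, as above, forces one to invoke $\alpha\ge\min(\beta,\gamma)$ exactly at the right place; and (b) tracking the constants and the strict-versus-non-strict inequalities so that the mean-square error is genuinely below $\text{TOL}^2$ rather than merely at most $\text{TOL}^2$ — this is where arranging the bias to sit strictly below $\text{TOL}^2/2$ and letting the ceilings carry the variance pays off. The hypothesis $\text{TOL}<e^{-1}$ is used to guarantee $L\ge 1$, to make the $\lceil\cdot\rceil$ bounds uniform, and to absorb the logarithmic factor cleanly into a single constant in the borderline case $\beta=\gamma$.
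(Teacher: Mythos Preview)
Your argument is the standard Giles/Cliffe--Giles--Scheichl--Teckentoff proof and is correct in outline and in the bookkeeping you highlight. Note, however, that the paper does \emph{not} supply its own proof of this theorem: it is quoted verbatim from \cite{cliffe2011multilevel} and used as a black box, so there is no ``paper's proof'' to compare against --- what you have written is essentially the proof given in that reference.
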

We emphasize that Theorem \ref{thm:MLMC_comlexity} still applies to our approach, proposed in Section \ref{sec:About the choice of the new measure and the importance sampling algorithm},  since  we only  modify the way we sample coupled paths in this context, by combining the standard way of coupling two tau-leap paths with  our IS algorithm. Our  proposed IS technique  does not change the weak rate but improves   the  strong convergence rate, $\beta$,  thus leading to an improvement of the MLMC complexity rate, to reach the optimal rate.
%\begin{remark}
%Although we are just interested in the forward problem in our work, MLMC can be applied to accelerate sampling methods for the computation of posterior expectations \cite{beskos2017multilevel,warne2018multilevel,warne2019simulation}.
%\end{remark}

\subsection{The High-Kurtosis Phenomenon}\label{sec:The large kurtosis issue}
Let $g$ denote a rdv, and let $g_{\ell}$ denote the corresponding level $\ell$ numerical approximation. We also define $Y_{\ell}:= g_{\ell}-g_{\ell-1}$. The standard deviation of the sample variance for the rdv $Y_{\ell}$ is given by 
\begin{equation}\label{eq:approx_var}
\sigma_{\mathcal{S}^2(Y_{\ell})} =\frac{\text{Var}[Y_{\ell}]}{\sqrt{M}}  \sqrt{(\kappa-1)+\frac{2}{M-1}},
\end{equation}
where the kurtosis $\kappa = \frac{\expt{\left(Y_{\ell}-\expt{Y_{\ell}}\right)^4}}{\left(\text{Var}\left[Y_{\ell}\right]\right)^2}$. 

For the setting of the MLMC method,  accurate estimates of $V_{\ell}=\text{Var}[Y_{\ell}]$ are required since the optimal number of samples per level, $M^{
\ast}_{\ell} $, for the MLMC estimator is given by (see \cite{giles2015multilevel} for more details)
\begin{equation}\label{eq:optimal_number_samples}
M^{
\ast}_{\ell} = \left[2 \text{TOL}^{-2} \sqrt{V_{\ell} W^{-1}_{\ell}  } \sum_{\ell=L_0}^L \sqrt{V_{\ell} W_{\ell} }  \right],
\end{equation}
where $[x]:=\text{ceil}(x)$,  $W_{\ell}$ is the cost per sample path,  $\text{TOL}$ is the accuracy of the MLMC estimator, and  $L_0$ \footnote{We set $L_0=0$ unless otherwise stated. In our numerical experiments, we select $L_0$ such that $\var{g_{L_0+1}{-}g_{L_0}} \ll \var{g_{L_0}}$, in order to  ensure the stability of the variance of the coupled paths of our MLMC estimator.} and $L$ are the coarsest and the deepest levels of the MLMC estimator, respectively.

The high kurtosis makes it challenging to estimate  $V_{\ell}$ accurately, since $\Ordo{\kappa}$ samples are required to obtain a reasonable estimate of the variance (see \ref{eq:approx_var}). Two possible consequences of the high kurtosis may occur, and deteriorate the robustness and the performance of the MLMC estimator
\begin{itemize}
\item The sample variance, $V_{\ell}$, is an under-estimate. The effect is that the required confidence interval semi-length is not faithfully attained,  due to $\sigma_{\mathcal{S}^2(Y_{\ell})}$ given by \eqref{eq:approx_var}.
\item The sample variance, $V_{\ell}$, is an over-estimate. In this case, too many sample paths are generated, and the algorithm takes substantially more time to run.
\end{itemize}
Several studies  \cite{giles2015multilevel,gou2016estimating,moraes2016multilevel,hammouda2017multilevel,lester2018robustly,bayer2020numerical} discussed the issue of high kurtosis when using MLMC, for different applications. In the context of SRNs, there are mainly two causes of  the high-kurtosis phenomenon: i) \textit{Catastrophic coupling} or ii) 
\textit{Catastrophic decoupling}. In the following subsections, we explain these two causes.
\subsubsection{Catastrophic Coupling}\label{sec:Catastrophic coupling}
The high-kurtosis phenomenon, in this case, is caused by  \textit{catastrophic coupling} (see Section 1.7 of \cite{moraes2016multilevel}), which is a characteristic of pure jump processes that motivates this work.  When using the MLMC estimator in this context, the following issue is usually encountered: When $\ell$ (MLMC level) becomes large, due to the used  coupling strategy (see Section \ref{sec:Idea}), $Y_{\ell}$ is different from zero only in a tiny  proportion of the simulated coupled paths (see Figures \ref{fig:catastrophic_coupling_illustration_decay_X_1}, \ref{fig:catastrophic_coupling_illustration_example2_X_1} \ref{fig:catastrophic_coupling_illustration_example4_X_3}). This behavior is one of the leading causes  of the high-kurtosis phenomenon (see Figures \ref{fig:MLMC_decay_X_0_10}, \ref{fig:MLMC_exp2_X_1} and \ref{fig:MLMC_exp4_X_3}), resulting in inaccurate estimates of the sample variance (see \eqref{eq:approx_var}). 

As an illustration of  \textit{catastrophic coupling}, consider an example  when $g$ takes values in $\{0,1\}$, and let $g_{\ell}$ denote the corresponding level $\ell$ numerical approximation in the MLMC estimator. In this case, we have
\begin{align}\label{eq:exp_high_kurt}
Y_{\ell}=g_{\ell}-g_{\ell-1}=\begin{cases}
               1,\quad  \text{with probability} \quad p_{\ell}\\
               -1,\quad \text{with probability}\quad q_{\ell}\\
               0,\quad  \text{with probability} \quad 1-p_{\ell}-q_{\ell}.
            \end{cases}
\end{align}
Observe that this example is a true illustration of the SRNs that we consider in this work. For instance, by observing the histograms in Figures  \ref{fig:catastrophic_coupling_illustration_decay_X_1}, \ref{fig:catastrophic_coupling_illustration_example2_X_1} and \ref{fig:catastrophic_coupling_illustration_example4_X_3}, we can check that we usually encounter the situation manifested by   \eqref{eq:exp_high_kurt}, with $p_{\ell},q_{\ell} \ll 1$, and  $\text{Prob}\left(g_{\ell}-g_{\ell-1}=0\right) \rightarrow 1$ as $\ell$ increases. 

If $p_{\ell},q_{\ell} \ll 1$, then $\expt{Y_{\ell}} \approx0$  and $\kappa_{\ell} \approx (p_{\ell}+q_{\ell})^{-1}\gg 1$. Therefore, many samples are required for an accurate estimate of $V_{\ell}=\text{Var}[Y_{\ell}]$, since using \eqref{eq:approx_var}, we need $M_{\ell} \gg \kappa_{\ell} \overset{\ell \rightarrow \infty} {\longrightarrow}\infty  $; otherwise, we may get all samples $Y_{\ell}=0$, which  gives an estimated variance of zero. Furthermore, the kurtosis  becomes worse as $\ell \rightarrow \infty$ since $p_{\ell},q_{\ell} \rightarrow0$ due to weak convergence.

%Example \ref{eq:exp_high_kurt} is also justified by the coupling strategy explained in Section \ref{sec:Idea}. In fact, for $g(x)=x$, \eqref{eq:exp_high_kurt} is similar to \eqref{eq:Final_error_diff_local_rep_approx_op1}, with $p_{\ell}=q_{\ell}=\Lambda_{\ell} \Delta t_\ell$ (see \eqref{eq:prob_Final_error_diff_local_rep_approx_op1}).
\subsubsection{Catastrophic Decoupling}\label{sec:Catastrophic decoupling}
The high-kurtosis phenomenon can also occur because of \textit{catastrophic decoupling}, as explained in \cite{lester2016extending} and observed in \cite{lester2018robustly}.  \textit{Catastrophic decoupling} occurs when  the terminal values of the sample paths of both coarse and fine levels   become
very different from each other. In fact, due to the  SPM coupling  strategy (see Section \ref{sec:Idea}), all reactions  start immediately in the fine level and not  in the coarse level, since reactions cannot happen  until the reaction propensities are updated.  We note that this issue  becomes more severe when dealing with large scales of species count.

We emphasize that we do not treat \textit{catastrophic decoupling} with our novel proposed method, but rather we address the case of  \textit{catastrophic coupling}. Nonetheless,  \textit{catastrophic decoupling} can be addressed  by using a different coupling,  such as CPM coupling \cite{lester2018robustly}. In a future work, to address the issue of  \textit{catastrophic decoupling}, we intend to explore the possibility of introducing a new IS scheme for MLMC based on SPM coupling.

\begin{remark}
As proposed in  \cite{gillespie2000chemical},  SRNs paths can be approximated  using the chemical Langevin equation (CLE), which is only valid when  the  expected number of occurrences of each reaction channel $R_j$
in $[t, t+\Delta t)$ is much larger than $1$, \ie,
\begin{equation}\label{eq:CLE_cond}
a_j(\mathbf{x}_t) \Delta t \gg1,\quad \forall j \in \{0,1\dots,J\}.
\end{equation}
Assumption \eqref{eq:CLE_cond}, implicitly implies that the system has large molecular population numbers.  In this work, we do not impose this restriction on the examples  we consider.  Moreover, such an assumption does not hold in our setting and more precisely in the deepest level of MLMC estimator ($\Delta t$  very small). 
\end{remark}

%Moreover,  CLE ignores the discreteness of the jump process by  approximating it as a continuous
%Markov process\footnote{CLE requires   considering the components of the state vector $\mathbf{X}(t)$ as real numbers, instead of the integers.}.

%\input{Outline_Work}

\section{Motivation}\label{sec:Idea}
\subsection{Characterization of the Original Coupling Measure}
Let us use the notations of Section \ref{sec:MLMC)}, and denote  $g_{\ell}:=g\left(\mathbf{Z}_{\ell}(T)\right)$. Then, we can rewrite \eqref{MLMC1} as
\begin{equation}\label{eq:MLMC}
\expt{g_{L}}=\sum_{\ell=1}^{L} \expt{g_\ell-g_{\ell-1}}+\expt{g_0},
\end{equation}
where each term in \eqref{eq:MLMC} can be written as
\begin{equation}\label{eq:expect_under_P}
\expt{g_0}=\int g_0 d \mathbb{P}_0, \quad  \expt{g_\ell- g_{\ell-1}}=\int (g_\ell-g_{\ell-1}) d \mathbb{P}_\ell,
\end{equation}
where $\mathbb{P}_\ell$ is the coupling measure and  $\mathbb{P}_0$ is the single level measure.

To characterize the original coupling measure $\mathbb{P}_\ell$ in the  context of SRNs, we  define the pure jump process $X$ by the Kurtz representation, as in (\ref{eq:exact_process}). For the sake of simplicity, let  us consider $X$ to be one-dimensional (only one species), only one reaction  $(J=1)$ (in this case we denote the state change scalar by $\nu_1$; see \eqref{reaction_channel}), and $g(x)=x,\: x \in \rset$. We denote $\overline{X}_{\ell-1}$, $\overline{X}_{\ell}$ the two TL approximations of the true process $X$ based on two consecutive grid levels $(\ell-1,\ell)$ and recall that $\Delta t_{\ell-1}=2\Delta t_{\ell}$ (equivalently, we denote by $N_{\ell}-1$ and $N_{\ell}$ the number of times steps used at levels $\ell-1$ and $\ell$, respectively). Let $0 \le n \le N_{\ell-1}-1$. If we consider two consecutive time-mesh points for  $\overline{X}_{\ell-1}$, $\{t_n, t_{n+1}\}$, and three consecutive time-mesh points for $\overline{X}_{\ell}$, $\{t_n, t_n+\Delta t_{\ell}, t_{n+1}\}$, then we have
\begin{small}
\begin{align}\label{eq:coupling_decomp}
\overline{X}_{\ell-1}(t_{n+1})&=\overline{X}_{\ell-1}(t_{n})+ \nu_1 \mathcal{Y}_{1,n}\left(a\left(\overline{X}_{\ell-1}(t_{n})\right) \Delta t_{\ell-1}\right)\nonumber \\
\overline{X}_{\ell}(t_{n}+\Delta t_{\ell})&=\overline{X}_{\ell}(t_{n})+ \nu_1 \mathcal{Q}_{1,n}\left(a\left(\overline{X}_{\ell}(t_{n})\right) \Delta t_{\ell}\right)\nonumber \\
\overline{X}_{\ell}(t_{n+1})&=\overline{X}_{\ell}(t_{n}+\Delta t_{\ell})+ \nu_1 \mathcal{R}_{1,n}\left(a\left(\overline{X}_{\ell}(t_{n}+\Delta t_{\ell})\right) \Delta t_{\ell} \right),
\end{align}
\end{small}
where $\mathcal{Y}_{1,n}, \mathcal{Q}_{1,n}, \mathcal{R}_{1,n}$ are  conditionally independent Poisson rdvs.

To couple the $\overline{X}_{\ell-1}$ and $\overline{X}_{\ell}$ processes, we first decompose $\mathcal{Y}_{1,n}\left(a\left(\overline{X}_{\ell-1}(t_{n})\right) \Delta t_{\ell-1}\right)$ as the sum of two conditionally independent Poisson rdvs, $\mathcal{P}_{1,n}\left(a\left(\overline{X}_{\ell-1}(t_{n})\right) \Delta t_{\ell}\right)+\mathcal{P}_{2,n}\left(a\left(\overline{X}_{\ell-1}(t_{n})\right) \Delta t_{\ell}\right)$. Then, by applying this decomposition in \eqref{eq:coupling_decomp}, we obtain
\begin{small}
\begin{align*}
\overline{X}_{\ell-1}(t_{n+1})&=\overline{X}_{\ell-1}(t_{n})+ \nu_1 \mathcal{P}_{1,n}\left(a\left(\overline{X}_{\ell-1}(t_{n})\right) \Delta t_{\ell}\right) + \nu_1 \mathcal{P}_{2,n}\left(a \left(\overline{X}_{\ell-1}(t_{n})\right) \Delta t_{\ell}\right)\nonumber \\
\overline{X}_{\ell}(t_{n+1})&=\overline{X}_{\ell}(t_{n})+ \nu_1 \mathcal{Q}_{1,n}\left(a\left(\overline{X}_{\ell}(t_{n})\right) \Delta t_{\ell}\right) + \nu_1 \mathcal{R}_{1,n}\left(a\left(\overline{X}_{\ell}(t_{n}+\Delta t_{\ell})\right) \Delta t_{\ell}\right).
\end{align*}
\end{small}
Furthermore, by using the same reasoning of coupling strategy as in \cite{Anderson2012} , we can show that for the first  time interval $[t_n, t_n+\Delta t_{\ell}]$, we have 
\begin{small}
\begin{align}\label{eq:coupled_levels_first_interval}
\overline{X}_{\ell-1}(t_n+\Delta t_{\ell})&= \overline{X}_{\ell-1}(t_n)+ \left ( \mathcal{P}^\prime_{n}\left(m^1_{\ell,n} \Delta t_{\ell}\right)+\mathcal{P}^{''}_{n} \left(\left(a\left(\overline{X}_{\ell-1}(t_n)\right)-m^1_{\ell,n}\right)\Delta t_{\ell}\right) \right ) \nu_1 \nonumber \\
\overline{X}_{\ell}(t_n+\Delta t_{\ell})&= \overline{X}_{\ell}(t_n)+ \left ( \mathcal{P}^\prime_{n}\left(m^1_{\ell,n} \Delta t_{\ell}\right)+\mathcal{P}^{'''}_{n} \left(\left(a\left(\overline{X}_{\ell}(t_n)\right)-m^1_{\ell,n}\right)  \Delta t_{\ell} \right)\right )\nu_1, 
\end{align}
\end{small}
where $m^1_{\ell,n}=\min \left(a\left(\overline{X}_{\ell}(t_n)\right),a\left(\overline{X}_{\ell-1}(t_n)\right)\right)$, and  $\mathcal{P}^\prime_{n}, \mathcal{P}^{''}_{n}, \mathcal{P}^{'''}_{n}$ are  conditionally independent Poisson rdvs.

For the time interval $[t_n+\Delta t_{\ell}, t_{n+1}]$, we have 
\begin{small}
\begin{align}\label{eq:coupled_levels_second_interval}
 \overline{X}_{\ell-1}(t_{n+1})&= \overline{X}_{\ell-1}(t_n+\Delta t_{\ell})+ \left ( \mathcal{Q}^\prime_{n}\left(m^2_{\ell,n} \Delta t_{\ell}\right)+\mathcal{Q}^{''}_{n} \left(\left(a\left(\overline{X}_{\ell-1}(t_n)\right)-m^2_{\ell,n}\right)\Delta t_{\ell}\right) \right ) \nu_1 \nonumber \\
 \overline{X}_{\ell}(t_{n+1}) &= \overline{X}_{\ell}(t_n+\Delta t_{\ell})+ \left ( \mathcal{Q}^\prime_{n}\left(m^2_{\ell,n} \Delta t_{\ell}\right)+\mathcal{Q}^{'''}_{n} \left(\left(a\left(\overline{X}_{\ell}(t_n+\Delta t_{\ell})\right)-m^2_{\ell,n}\right)  \Delta t_{\ell} \right)\right )\nu_1, 
\end{align}\end{small}
where $m^2_{\ell,n}=\min \left(a\left(\overline{X}_{\ell}(t_n+\Delta t_{\ell})\right),a\left(\overline{X}_{\ell-1}(t_n)\right)\right)$, and  $\mathcal{Q}^\prime_{n}, \mathcal{Q}^{''}_{n}, \mathcal{Q}^{'''}_{n}$ are  conditionally independent Poisson rdvs.

\eqref{eq:coupled_levels_first_interval} and \eqref{eq:coupled_levels_second_interval} imply that 
\begin{small}
\begin{align}\label{eq: local_error_expression}
\overline{X}_{\ell}(t_{n+1})-\overline{X}_{\ell-1}(t_{n+1})&=\overline{X}_{\ell}(t_{n})-\overline{X}_{\ell-1}(t_{n})\nonumber\\
&+\nu_1 \left(  \mathcal{P}^{'''}_n  \left( \Delta a^1_{\ell-1,n} \Delta t_{\ell}\right) \mathbf{1}_{\Delta a^1_{\ell-1,n}>0} -  \mathcal{P}^{''}_n  \left( -\Delta a^1_{\ell-1,n} \Delta t_{\ell}\right) \mathbf{1}_{\Delta a^1_{\ell-1,n}<0} \right)\nonumber\\
&+\nu_1 \left(  \mathcal{Q}^{'''}_n  \left( \Delta a^2_{\ell-1,n} \Delta t_{\ell}\right) \mathbf{1}_{\Delta a^2_{\ell-1,n}>0} -  \mathcal{Q}^{''}_n  \left( -\Delta a^2_{\ell-1,n} \Delta t_{\ell}\right) \mathbf{1}_{\Delta a^2_{\ell-1,n}<0} \right)\COMMA
\end{align}
\end{small}
where $\Delta a^1_{\ell-1,n}=a\left(\overline{X}_{\ell}(t_{n})\right)-a\left(\overline{X}_{\ell-1}(t_{n})\right)$ and  $\Delta a^2_{\ell-1,n}=a\left(\overline{X}_{\ell}(t_{n}+\Delta t_{\ell})\right)-a\left(\overline{X}_{\ell-1}(t_{n})\right)$. 

In the following, we denote, for $0 \le n \le N_{\ell-1}-1$ (note that  $N_{\ell}=2 N_{\ell-1}$),
\begin{small}
\begin{align}\label{eq:abs_delta_a}
\begin{cases}
              \Delta a_{\ell,2 n}= |\Delta a^1_{\ell-1,n}|,\quad  \text{in} \quad  [t_n, t_n+\Delta t_{\ell}]. \\
                \Delta a_{\ell,2 n+1}=|\Delta a^2_{\ell-1,n}|,\quad  \text{in} \quad [t_n+\Delta t_{\ell}, t_{n+1}].
            \end{cases}
\end{align}
\end{small}
Note that in \eqref{eq: local_error_expression}, not only are  $\mathcal{P}^{''}_n, \mathcal{P}^{'''}_n, \mathcal{Q}^{''}_n, \mathcal{Q}^{'''}_n$  rdvs, but   $\Delta a_{\ell,2n}$ and $\Delta a_{\ell,2n+1}$  (defined in \eqref{eq:abs_delta_a}) are also   rdvs, because of their dependence on $\overline{X}_{\ell-1}(t_{n})$, $\overline{X}_{\ell}(t_{n})$, and $\overline{X}_{\ell}(t_n+\Delta t_{\ell})$. Therefore,  to derive some of the following formulas for analyzing our IS  algorithm, we need to consider a sigma-algebra, $\mathcal{F}_{n_{\ell}}$, with $0\le n_{\ell}\le N_{\ell}-1$, such that  $\Delta a_{\ell,n_{\ell}}$, conditioned on $\mathcal{F}_{n_{\ell}}$, is deterministic, \ie,   $\Delta a_{\ell,n_{\ell}}$ is measurable with respect to $\mathcal{F}_{n_{\ell}}$. This way, the only randomness being considered comes from the Poisson rdvs used for updating the states of $\overline{X}_{\ell}(t_{n+1})$ and $\overline{X}_{\ell-1}(t_{n+1})$. For  this purpose, we consider  for a fixed $n_{\ell}$, $\mathcal{F}_{n_{\ell}}$ as the sigma algebra
\begin{equation}\label{eq:conditioned_filtration}
\mathcal{F}_{n_{\ell}}:=\sigma\left( \left(\Delta a_{\ell,k}\right)_{k=0,\dots,n_{\ell}}  \right), \quad n_{\ell}=0,\dots,N_{\ell}-1.
\end{equation}
In what follows,  the terms $\{\Delta a_{\ell,n}\}_{n=0}^{N_{\ell}-1}$, defined in  \eqref{eq:abs_delta_a}, will be denoted for the multi-channel case, by  $\{\Delta a^j_{\ell,n}\}_{n=0}^{N_{\ell}-1}$, where $j \in \{1,\dots, J\}$   corresponds to the index of the reaction channel.

\subsection{Characterization of the Optimal Change of  Measure}
It is known that, the optimal change of measure, $\pi_{\ell}$,  the one that achieves the minimum variance,   satisfies
\begin{equation}\label{eq:expect_under_Pi}
d \pi_0 \propto \abs{g_0} d \mathbb{P}_0, \quad d \pi_\ell \propto \abs{g_\ell-g_{\ell-1}} d \mathbb{P}_\ell.
\end{equation}
Observe that the optimal measure, $\pi_\ell$,   removes the probability mass at  zero, where most of $\mathbb{P}_{\ell}$ is concentrated due to catastrophic coupling (explained in Section \ref{sec:Catastrophic coupling}). We emphasize that, in this work, we aim to  perform a change of measure with respect to  $\mathbb{P}_\ell$, while keeping  the single level measure  $ \mathbb{P}_0$ unchanged.

The minimum variance is given by 
 \begin{align}\label{eq:IS_optimal}
 \text{Var}_{\pi_{\ell}}\left[\left(g_\ell-g_{\ell-1}\right) \frac{d\mathbb{P}_\ell}{d\pi_{\ell}}\right]&= \left(E_{\mathbb{P}_{\ell}}\left[\abs{g_\ell-g_{\ell-1}}\right]\right)^2 \left(1 - \left(E_{\pi_{\ell}}\left[sgn \left(g_\ell-g_{\ell-1}\right)\right]\right)^2 \right)\nonumber\\
& =\left(E_{\mathbb{P}_{\ell}}\left[\abs{g_\ell-g_{\ell-1}}\right]\right)^2 \left( 1 - \left(\pi_{\ell}\left( g_\ell-g_{\ell-1}>0\right)- \pi_{\ell} \left(g_\ell-g_{\ell-1}<0\right)\right)^2\right),
 \end{align}
 where $\text{sgn}(.)$ is the sign function.
 
Interestingly,  using   Theorem 3.2 in   \cite{anderson2011error} in the context of the  explicit TL scheme for pure jump processes, we conclude that $E_{\mathbb{P}_{\ell}}\left[\abs{g_\ell-g_{\ell-1}}\right]=\Ordo{\Delta t_{\ell}}$ for any Lipschitz function $g$. Therefore, we clearly observe  that the optimal  IS improves the strong convergence rate, and hence leads to the optimal complexity rate of the MLMC estimator (see Theorem \ref{thm:MLMC_comlexity}). 

Unfortunately, it is unfeasible to sample from $\pi_{\ell}$; therefore, our goal in the following sections is to propose a practical IS algorithm with a sub-optimal change of measure, $\bar{\pi}_{\ell}$.

\section{Main Results}\label{sec:main results}
Our analysis  and theoretical estimates in  Section \ref{sec:About the choice of the new measure and the importance sampling algorithm}, and numerical experiments in Section \ref{sec:num_experiments}  show that
\begin{enumerate}
\item  For  $\ell=1,\dots,L$, and  $g_{\ell}:=g\left(\overline{\mathbf{X}}_{\ell}\right)$: the change of measure is performed at each time step, for  $0 \le n \le N_{\ell}-1$, by going forward in time, and is only applied when
\begin{small}
\begin{equation}\label{eq: imp_sampling_cond_general}
 \text{i)}\quad  j \in \mathcal{J}_1:= \{1\le j\le J; \quad g(\mathbf{X}+\boldsymbol{\nu}_j) \neq g(\mathbf{X})\} \quad \& \quad \text{ii)} \quad  \Delta a^j_{\ell,n} \neq 0 \quad \& \quad \text{iii)}\quad  \Delta g_{\ell} (t_n)=0
\end{equation}
where $\Delta g_{\ell}:=g_{\ell}-g_{\ell-1}$.
\end{small}
\item If  \eqref{eq: imp_sampling_cond_general} is fulfilled: instead of using $\Delta a^j_{\ell,n} \Delta t_{\ell}$  in  \eqref{eq:abs_delta_a},   we propose to use $\lambda^j_{\ell,n} \Delta t_{\ell}$, with   $\lambda^j_{\ell,n}$ is  given by
\begin{small}
\begin{equation}\label{eq:choice_c_l_step1}
\lambda^j_{\ell,n} =c_{\ell} \Delta a^j_{\ell,n}=\Delta t_{\ell}^{-\delta} \Delta a^j_{\ell,n} ,\quad 0<\delta<1 \COMMA
\end{equation}
\end{small}
where $\delta$ is a scale parameter in our  IS algorithm.
\item We show that  our proposed method (MLMC with IS) significantly reduces  the kurtosis at the deep levels of MLMC (small $\Delta t_{\ell}$) (see Theorem \ref{thm:kurtosis_high_dim} and the numerical experiments in Section \ref{sec:num_experiments}),  and also improves the strong convergence rate from $\beta=1$,  for the standard case (without IS), to $\beta=1+\delta$, where $0<\delta<1$ is a user-selected parameter in our IS algorithm (see Theorem  \ref{thm:variance_high_dim}, and the numerical experiments in Section \ref{sec:num_experiments}). Due to  Theorem \ref{thm:MLMC_comlexity},   and given  a pre-selected tolerance, $\text{TOL}$, this results in an improvement of the complexity from  $\Ordo{\text{TOL}^{-2} \log(\text{TOL})^2}$, in the standard case, to the optimal complexity, \ie,  $\Ordo{\text{TOL}^{-2}}$.  These improvements come with a negligible additional cost since   we show in Section \ref{sec:Cost analysis} that $W^{\text{without IS}}_{\ell,\text{sample}} \approx W^{\text{with IS}}_{\ell,\text{sample}}$ ($W_{\ell,\text{sample}}$ denotes the average cost of simulating coupled  MLMC paths at level $\ell$). We show a summary of these results in Table \ref{table:Main results of our work.}; see Sections \ref{sec:About the choice of the new measure and the importance sampling algorithm} and \ref{sec:num_experiments} for more details.
\FloatBarrier
\begin{small}
\begin{table}[h!]
	\centering
	    \begin{tabular}{c|c|c}
\toprule[1.5pt]
 \multicolumn{1}{c|}{Quantity of Interest} & \multicolumn{1}{c|}{MLMC Without IS (standard case)} & \multicolumn{1}{c}{MLMC With IS ($0<\delta<1$)} \\
\midrule	      
		   $\kappa_{\ell}$ &  $\Ordo{\Delta t_{\ell}^{-1}} $  &  $\Ordo{\Delta t_{\ell}^{\delta-1}}$  \\
		   \hline
	$V_{\ell}$ & $\Ordo{\Delta t_{\ell}}$ &  $\Ordo{\Delta t_{\ell}^{1+\delta}}$ \\
	 	\hline
		  $W_{\ell,\text{sample}}$  & $\approx 2 \times J \times C_p \times \Delta t_{\ell}^{-1}$ &  $\approx 2 \times J \times C_p \times \Delta t_{\ell}^{-1}$  \\
	\hline
		  $\text{Work}_{\text{MLMC}}$ & $\Ordo{\text{TOL}^{-2} \log\left(\text{TOL}\right)^2}$ &  $\Ordo{\text{TOL}^{-2}}$ \\
		 
		\bottomrule[1.25pt]
	\end{tabular}
	\caption{Main results for the comparison of MLMC combined with  our IS algorithm, and  standard MLMC. $\kappa_{\ell}$ denotes the kurtosis of the coupled MLMC paths at level $\ell$.  $V_{\ell}$ denotes the variance of the coupled MLMC paths at level $\ell$. $C_p$ is the  cost of generating one Poisson rdv.}
	\label{table:Main results of our work.}
\end{table}
\end{small}
\FloatBarrier
\end{enumerate}

\section{Construction of the  IS Measure and   Convergence Estimates}\label{sec:About the choice of the new measure and the importance sampling algorithm}
%\red{In this work, we address the issue of catastrophic coupling, explained in Section \ref{sec:Catastrophic coupling},  by proposing a novel path-dependent IS algorithm for the MLMC estimator, that we explain in the following sections. Then, in Section \ref{sec:One dimensional case}, we motivate and analyze our proposed method for the one-dimensional case (one species and one reaction). We explain in Section \ref{sec:Generalization of our method to multi-channels and high dimensional states} how we generalize our approach for the  case of multi-channels and high dimensional states.   Lastly, in Section \ref{sec:Cost analysis}, we present a cost analysis of our approach (MLMC combined with IS) and the standard MLMC method.}
%\subsection{Construction of the  IS Measure} 
%In this section, we describe how we choose the new measure to be used  for our proposed IS algorithm.

\subsection{Construction of the  IS Measure: The One-Dimensional Case}\label{sec:One dimensional case}
We start with the one-dimensional case (only one species), where the number of reactions is $(J=1)$.  Instead of using $\Delta a_{\ell,n} \Delta t_{\ell}$ as the rate parameter of the Poisson rdvs used in each time step to update the states of the coupled paths (\eqref{eq:coupled_levels_first_interval}, \eqref{eq:coupled_levels_second_interval}) where $\Delta a_{\ell,n}$ is given by \eqref{eq:abs_delta_a},  we suggest using $\lambda_{\ell,_n} \Delta t_{\ell}$, with the parameter  $\lambda_{\ell,n}$, which will be determined given some constraints that we impose to ensure that our change of measure is i) reducing the kurtosis of the MLMC estimator at the deep levels, ii) reducing the variance of the MLMC levels and increasing the strong convergence rate. In the following, we denote   $g_{\ell}:=g\left(\overline{\mathbf{X}}_{\ell}\right)$, and $\Delta g_{\ell}:=g_{\ell}-g_{\ell-1}$, for  $\ell=1,\dots,L$.

The change of measure is performed at each time step by going forward in time, and is only applied when it is needed, \ie, we impose the following   condition for applying the change of measure 
\begin{align}\label{eq: imp_sampling_cond}
\Delta a_{\ell,n} \neq 0 \quad \& \quad \Delta g_{\ell} (t_n)=0,\quad  0 \le n \le N_{\ell}-1, \quad \ell=1,\dots,L.
\end{align}
Condition \eqref{eq: imp_sampling_cond} is motivated by the fact that i) we need to change the measure only in cases where the coupled paths at the $n$th time step are equal and ii) for cases where the rates of the Poisson rdvs are non zero, so we do not have the issue of the likelihood being equal to zero.

Whenever \eqref{eq: imp_sampling_cond} holds, the change of measure is given by changing the rate of the Poisson rdvs (see \eqref{eq: local_error_expression} and \eqref{eq:abs_delta_a})  in the tau-leap approximation from $\Delta a_{\ell,n}\Delta t_{\ell}$ to  $\lambda_{\ell,n}\Delta t_{\ell}$. Hence, the conditional likelihood is then given by the ratio between the probability mass functions of two Poisson rdvs with rates $\Delta a_{\ell,n}\Delta t_{\ell}$ and  $\lambda_{\ell,n}\Delta t_{\ell}$. Through a simple computation, this leads to
\begin{align}\label{eq:lik_each_step}
L_{\ell,n}&=\frac{e^{-\Delta a_{\ell,n} \Delta t_{\ell}}}{e^{-\lambda_{\ell,n} \Delta t_{\ell}}} \left(\frac{\Delta a_{\ell,n}}{\lambda_{\ell,i}}\right)^{k_n}, \quad n \in \mathcal{I}^s_{\ell}   \nonumber\\
&=e^{-\Delta t_{\ell}(\Delta a_{\ell,n}-\lambda_{\ell,n})} \left(\frac{\Delta a_{\ell,n}}{\lambda_{\ell,n}}\right)^{k_n}\COMMA
\end{align}
where $k_n$ is the number of jumps that occurs at the $n$th time step  where we apply the change of measure, and  $\mathcal{I}_{\ell}^{s}$ is the random  set including the time steps at level $\ell$  where we simulate the Poisson rdvs under the new measure.

Thus, across one path, the  likelihood ratio is given by
\begin{equation}\label{eq: lik_level_ell_def}
L_{\ell}=e^{-\Delta t_{\ell} \sum_{n \in \mathcal{I}_{\ell}^{s}}(\Delta a_{\ell,n}-\lambda_{\ell,n})} \prod_{n  \in \mathcal{I}_{\ell}^{s}} \left(\frac{\Delta a_{\ell,n}}{\lambda_{\ell,n}}\right)^{k_n}\PERIOD
\end{equation}
Furthermore, if we impose that $\lambda_{\ell,n}=c_{\ell} \Delta a_{\ell,n}$ then we obtain
\begin{equation}\label{eq: lik_level_ell}
L_{\ell}=\left(e^{(c_{\ell} -1) \Delta t_{\ell} \sum_{n \in \mathcal{S}}\Delta a_{\ell,n}}\right) \left(c_{\ell}^{-\sum_{n \in \mathcal{S}} k_n}\right).
\end{equation}
We note that imposing $\lambda_{\ell,n}=c_{\ell} \Delta a_{\ell,n}$ can be motivated by the fact that we want to keep the same physical structure of the rate of the Poisson process driving the state changes, \ie, depending on $\Delta a_{\ell,n}$. However, we try to introduce a scaled constant $c_{\ell}$ that depends on $\Delta t_{\ell}$ so that we reduce the probability of having $\Delta g_{\ell}(T)=0$, under the new measure. A reasonable choice of  $c_{\ell}$  is given by  
\begin{align}\label{eq:choice_c_l_step1}
c_{\ell}=\Delta t_{\ell}^{-\delta}\COMMA
\end{align}
where $\delta>0$ is the scale parameter to be determined. Note that the case $\delta=0$ is similar to the case of using the old measure in all time steps. 
\begin{remark}\label{rem:proba_0event_delta}
Observe that for  $\delta \in (0,1)$ and $g(x)=x,\: x \in \rset$, we have $\bar{\pi}_{\ell}\left(\abs{\Delta \overline{X}_{\ell}(T)}=0 \right)$ is still approaching $1$  as  $\Delta t_{\ell}$ decreases,  but compared to the initial situation (without IS), we decrease the rate of convergence with respect to $\Delta t_{\ell}$ (compare Figure \ref{fig:catastrophic_coupling_illustration_decay_X_1}, for the case without  IS, and Figure \ref{fig:catastrophic_coupling_illustration_decay_X_1_imp_sampling} for the case with IS  with $\delta=\frac{3}{4}$). 
\end{remark}

\subsection{Construction of the  IS Measure:: The Multi-Channels and High Dimensional States Case}\label{sec:Generalization of our method to multi-channels and high dimensional states}
Extending  our method to a higher dimension in the number of reaction channels, $J$,   and in the state vector $\mathbf{X}$ is straightforward with slight modifications. We first define the set $\mathcal{J}_1$ as 
\begin{align*}
\mathcal{J}_1=\{1\le j\le J; \quad g(\mathbf{X}+\boldsymbol{\nu}_j) \neq g(\mathbf{X})\}.
\end{align*}
In the multi-channel case, we are only interested in changing the measure for reactions whose stoichiometric vector, $\boldsymbol{\nu}_j$,  changes  the state of the quantity of interest, \ie, for reactions with index $j \in \mathcal{J}_1$. In Algorithm \ref{alg:exp_TL_imp_sampling}, we summarize our methodology for simulating two coupled explicit TL paths with IS.
\begin{small}
\begin{algorithm}[h!]
\caption{Simulates two coupled explicit TL paths with IS, and computes the likelihood ratio. }
\label{alg:exp_TL_imp_sampling}
\begin{algorithmic}[1]
	\State Fix $ \Delta t_{\ell} > 0$  and set $\Delta t_{\ell-1} = 2 \times \Delta t_{\ell}$. 
	\State Set $\mathbf{Z}_{\ell}(0) = \mathbf{Z}_{\ell-1}(0) = \mathbf{x}_{0}$, $t_{\ell} = t_{\ell-1}=0$, $n=0$.
	\State  Set $c_{\ell}=\Delta t_{\ell}^{-\delta}$, $\delta \in (0,1)$
		\While{$t_{\ell} < T$}
		\State $n=n+1$
		\For {$j{=}1$ \textbf{to} $J$}
         \If {$\left( a_{j}(\mathbf{Z}_{\ell}(t_{\ell})) \neq a_{j}(\mathbf{Z}_{\ell-1}(t_{\ell-1})) \quad  \& \quad g(\mathbf{Z}_{\ell}(t_{\ell}))=g(\mathbf{Z}_{\ell-1}(t_{\ell})) \quad  \& \quad  j \in \mathcal{J}_1\right)$}
	\State $A_{3(j-1)+1}= \min\left(a_{j}(\mathbf{Z}_{\ell}(t_{\ell})), a_{j}(\mathbf{Z}_{\ell-1}(t_{\ell-1}))\right)$
	\State $A_{3(j-1)+2}=  c_{\ell} \left(a_{j}\left(\mathbf{Z}_{\ell}(t_{\ell})\right)-A_{3(j-1)+1}\right) $
	\State $A_{3(j-1)+3}=c_{\ell} \left( a_{j}\left(\mathbf{Z}_{\ell-1}(t_{\ell-1})\right)-A_{3(j-1)+1}\right) $
	\State Compute $L_{\ell,n}$ using \eqref{eq:lik_each_step} and update  the likelihood terms, $L_{\ell}^j$ and  $L_{\ell}$   using  \eqref{eq:lik_fact_each_reaction} and \eqref{eq:lik_high_dim_total}.
	\Else
		\State $A_{3(j-1)+1}= \min \left(a_{j}(\mathbf{Z}_{\ell}(t_{\ell})),  a_{j}(\mathbf{Z}_{\ell-1}(t_{\ell-1})) \right)$
	\State $A_{3(j-1)+2}= a_{j}(\mathbf{Z}_{\ell}((t_{\ell}))-A_{3(j-1)+1} $
	\State $A_{3(j-1)+3}=a_{j}(\mathbf{Z}_{\ell-1}(t_{\ell-1}))-A_{3(j-1)+1} $
	  \EndIf
	\State $\Lambda_{3(j-1)+1}= \: \text{Poisson} \: (A_{3(j-1)+1} \Delta t_{\ell}) $
	\State $\Lambda_{3(j-1)+2}= \: \text{Poisson} \: (A_{3(j-1)+2} \Delta t_{\ell})$
	\State $\Lambda_{3(j-1)+3}= \: \text{Poisson} \: (A_{3(j-1)+3} \Delta t_{\ell}) $
	\EndFor
	\State State updating 
	\begin{itemize}
	\item[i)] Set $\boldsymbol{\Gamma}_{\ell}= \boldsymbol{\nu} \otimes [1 \: 1 \: 0]$ and $\boldsymbol{\Gamma}_{\ell-1}= \boldsymbol{\nu} \otimes [1 \: 0 \: 1]$ ($A \otimes B$ refers to the Kronecker product of the matrices $A$ and $B$).	
	\item[ii)] Update $\mathbf{Z}_{\ell}(t_{\ell}+ \Delta t_{\ell}) = \mathbf{Z}_{\ell}(t_{\ell})+ \Delta t_{\ell} \boldsymbol{\Gamma}_{\ell} \Lambda$
	\item[iii)] Update $\mathbf{Z}_{\ell-1}(t_{\ell}+ \Delta t_{\ell}) = \mathbf{Z}_{\ell-1}(t_{\ell})+ \Delta t_{\ell} \boldsymbol{\Gamma}_{\ell-1} \Lambda$
	\end{itemize}
	  \If{$(n \: mod \: 2) =0 $}  		
				$t_{\ell-1}=t_{\ell-1}+\Delta t_{\ell-1}$
	    \EndIf
	  
	\State $t_{\ell}=t_{\ell}+\Delta t_{\ell}$
	\EndWhile
\end{algorithmic}
\end{algorithm}
\end{small}

We consider  a number of reactions $J>1$,  $\mathbf{X} \in \nset^d, d \ge 1$ and  $g: \rset^d \rightarrow \rset$.  Hereafter, we denote by $\{\nu_{j,i}\}_{j=1}^J$ the  coordinates in the stoichiometric vectors, $ \{\boldsymbol{\nu}_j\}_{j=1}^J$,  corresponding to the state change of the $i$th species.
%\red{The simpler choice of $g$  is  to make the analysis as simple as possible, but the  analysis presented below holds for any Lipschitz  function $g$ (we need to check this maybe we can restrict ourselves to projections or can we have  a larger class?)}.
%\subsubsection{Construction of the  IS Measure}\label{sec:Construction of a Sub-Optimal Change of Measure_high_dim}
For a fixed $0\le n \le N_{\ell}-1$, we define  $\mathcal{F}_n$ to be the sigma algebra given by
\begin{align}\label{eq:conditioned_filtration_high_dim}
\mathcal{F}_{n}:=\sigma\left( \left(\Delta a^j_{\ell,k}\right)_{j=1,\dots,J;k=0,\dots,n}  \right), \quad n=0,\dots,N_{\ell}-1.
\end{align}
The  likelihood ratio for each reaction channel $j \in \mathcal{J}_1$  has a similar expression to \eqref{eq: lik_level_ell}, and is given by
\begin{align}\label{eq:lik_fact_each_reaction}
L_{\ell}^j =\left(e^{(c_{\ell} -1) \Delta t_{\ell} \sum_{n \in \mathcal{I}_{\ell,j}^{s}} \Delta a^j_{\ell,n}}\right) \left(c_{\ell}^{-\sum_{n \in \mathcal{I}_{\ell,j}^{s}} k^j_n}\right), \quad  j \in \mathcal{J}_1,
\end{align}
where $k_n^j$ is the number of jumps associated with the $j$th reaction channel that occurs at the $n$th time step where we apply the change of measure, and  $\mathcal{I}_{\ell,j}^{s}$ is the random set including the time steps at level $\ell$,  where we simulate the Poisson rdvs under the new measure for the $j$th reaction channel.

Thus, across one path, the  likelihood ratio is given by
\begin{small}
\begin{align}\label{eq:lik_high_dim_total}
L_{\ell}=\prod_{j \in \mathcal{J}_1} L_{\ell}^j=\left(e^{(c_{\ell} -1) \Delta t_{\ell}\sum_{j \in \mathcal{J}_1} \sum_{n \in \mathcal{I}_{\ell,j}^{s}} \Delta a^j_{\ell,n}}\right) \left(c_{\ell}^{-\sum_{j \in \mathcal{J}_1}\sum_{n \in \mathcal{I}_{\ell,j}^{s}} k^j_n}\right).
\end{align}
\end{small}
Similarly to Section \ref{sec:One dimensional case}, we choose  $c_{\ell}$ to be  given by \eqref{eq:choice_c_l_step1}, with $\delta>0$. Remark \ref{rem:proba_0event_delta} holds for the high dimensional case. In particular, compare Figures \ref{fig:catastrophic_coupling_illustration_example2_X_1} and \ref{fig:catastrophic_coupling_illustration_example4_X_3}, for the case without  IS, and Figures \ref{fig:catastrophic_coupling_illustration_exampl2_X_1_imp_sampling} and  \ref{fig:catastrophic_coupling_illustration_exampl3_X_3_imp_sampling} for the case using IS  with $\delta=\frac{3}{4}$, for $g(\mathbf{X})=X^{(i)}$, \ie, the projection on the $i$th coordinate of the state vector $\mathbf{X}$.

\subsection{Convergence Estimates of MLMC combined with IS}\label{sec:Convergence Estimates and Optimal IS Parameter_high_dim}

In this section, we aim to derive convergence estimates of the kurtosis and the variance.  We start by  stating the main two assumptions (Assumptions \ref{assumption1_high_dim} and \ref{assumption2_high_dim}), needed to derive the main results in this section.  For the ease of presentation, we consider $g(\mathbf{X})=X^{(i)}$, the projection on the $i$th coordinate of the state vector $\mathbf{X}$.
\begin{assumption}\label{assumption1_high_dim}
For a small $\Delta t_{\ell}$, and conditioning on $\mathcal{F}_{N_{\ell}-1}$ and $\left(\mathcal{I}^s_{\ell,j}=\mathcal{S}_j\right)_{j \in \mathcal{J}_1}$,  we denote, for  $j \in \mathcal{J}_1 $, $K_j=  \sum_{n \in \mathcal{S}_j} k_n^j$, and  we assume that, for $0 \le \delta<1$,

(a) for all $\overline{\mathbf{K}} \in \nset^{\# \mathcal{J}_1}$ such that $\sum_{j \in \mathcal{J}_1}  \nu_{j,i} \overline{K}_j \neq 0$, we have
\begin{small}
\begin{equation*}
\frac{\bar{\pi}_{\ell}\left( \abs{\Delta g_{\ell}(T)}=\abs{ \sum_{j \in \mathcal{J}_1} \nu_{j,i} \overline{K}_j}, \cap_{j \in \mathcal{J}_1} \{K_j=\overline{K}_j\} ; \left(\mathcal{F}_{N_{\ell}-1}, \left(\mathcal{I}^s_{\ell,j}=\mathcal{S}_j\right)_{j \in \mathcal{J}_1}\right)\right)}{\sum_{j \in \mathcal{J}_1}\bar{\pi}_{\ell}\left( \abs{\Delta g_{\ell}(T)}=\abs{\nu_{j,i} }, \{K_j= 1\}  \cap_{q \neq j} \{K_q =0\}; \left(\mathcal{F}_{N_{\ell}-1}, \left(\mathcal{I}^s_{\ell,j}=\mathcal{S}_j\right)_{j \in \mathcal{J}_1}\right)\right)} \le 1.
\end{equation*}
\end{small} 

(b)  for all $q \ge 1$, there exists $\eta_{q,\ell}>0$ such that,
 \begin{small}
\begin{equation*}
\bar{\pi}_{\ell} \left\lbrace \abs{\Delta g_{\ell}(T)}= q ;\left(\mathcal{F}_{N_{\ell}-1}, \left(\mathcal{I}^s_{\ell,j}=\mathcal{S}_j\right)_{j \in \mathcal{J}_1}\right)  \right\rbrace \le \eta_{q,\ell} \Delta t_{\ell}^{(1-\delta) q}, \quad \text{ with} \: \frac{\eta_{q+1,\ell}}{\eta_{q,\ell}}\le \eta,
\end{equation*}
\end{small}
with $\eta$ independent of $\ell$ and $q$.

(c) for all  $j \in \mathcal{J}_1$ there  exist a single  $n_j^\ast \in \mathcal{S}_j$ such that
\begin{small}
\begin{equation*}
\bar{\pi}_{\ell} \left\lbrace \abs{\Delta g_{\ell}(T)}=\abs{\nu_{j,i} },  \{K_j= 1\}  \cap_{q \neq j} \{K_q =0\} ; \left(\mathcal{F}_{N_{\ell}-1}, \left(\mathcal{I}^s_{\ell,j}=\mathcal{S}_j\right)_{j \in \mathcal{J}_1}\right)  \right\rbrace = e^{-\Delta t_{\ell}^{1-\delta} \Delta a^j_{\ell,n_j^\ast}} \left(\Delta t_{\ell}^{1-\delta} \Delta a^j_{\ell,n_j^\ast}\right) \left(1+\ordo{1}\right).
\end{equation*}
\end{small}

\end{assumption}
\begin{assumption}\label{assumption2_high_dim}
There exists $ \ell_0\ge 0$ such that for all  $ \ell \ge \ell_0$,  we have 
\begin{align*}
0<C_1\le \sum_{j \in \mathcal{J}_1} \exptpibar{e^{- \Delta a^j_{\ell,n_j^\ast}} \Delta a^j_{\ell,n_j^\ast}}  \le \exptpibar{e^{ \sum_{j \in \mathcal{J}_1} \sum_{n \in \mathcal{I}^s_{\ell,j}} \Delta a^j_{\ell,n_j^\ast}}} \le  C_2 <\infty,
\end{align*}
where $ C_1$, $C_2$ are independent of $\ell$, and $n_j^\ast$ are defined in Assumption \ref{assumption1_high_dim} (c).
\end{assumption}
We emphasize that Assumption \ref{assumption1_high_dim} (c) is   motivated by our numerical observations, which suggest that for small values of $\Delta t_{\ell}$, we sample at most one single step using our IS algorithm, which 
separates the two paths (see Figures \ref{fig:number_k_n>0_decay}, \ref{fig:number_k_n>0_example2} and  \ref{fig:number_k_n>0_example4} in Appendix \ref{appendix:Numerical evidence of   Assumptions}).   Furthermore, by observing that $\Delta a^j_{\ell,n}=\Ordo{1},\: \forall j \in \mathcal{J}_1$, Assumption \ref{assumption2_high_dim} is   motivated by our numerical observations (see Figures \ref{fig: IS steps decay} and  \ref{fig: IS steps example4}), which show that $\expt{\sum_{j\in \mathcal{J}_1}\#\mathcal{I}^s_{\ell,j}}=\Ordo{1}$.

Now, we state the main results of this section through Theorems \ref{thm:kurtosis_high_dim} and \ref{thm:variance_high_dim}. The proof of these theorems  are identical   to the one dimensional proofs (one species and one reaction ($J=1$)) with slight differences. Consequently, for ease of presentation, we present in Appendix \ref{appendix:Proofs of Lemma}  the one dimensional proofs.  The  key result for these proofs is Lemma \ref{lemma: moments} which is proven   in Appendix \ref{appendix:Proofs of Lemma}. In the following and  without loss of generality, we also assume that $\abs{\nu_1}=1$. 
\begin{lemma}[Conditional $L^p$ moments estimates]\label{lemma: moments}
Let $J=1$, $p \ge 1$ and $0 \le \delta<1$, and suppose that Assumptions  \ref{assumption1_high_dim} and \ref{assumption2_high_dim} hold, then, for   $\Delta t_{\ell} \rightarrow 0$, we have 
\begin{small}
\begin{equation}\label{eq:moments}
\exptpibar{\abs{\Delta g_{\ell}}^p(T) L_{\ell}^p ; \left(\mathcal{F}_{N_{\ell}-1}, \mathcal{I}^s_{\ell}=\mathcal{S}\right)}=  \Delta t_{\ell}^{(p -1)\delta+1}  \left(\Delta a_{\ell,n^\ast}\right)  e^{p(\Delta t_{\ell}^{1-\delta} -\Delta t_{\ell})  \sum_{n \in \mathcal{S}} \Delta a_{\ell,n}}  e^{- \left( \Delta t_{\ell}^{1-\delta}  \Delta a_{\ell,n^\ast} \right)}  \left(1+h_{p,\ell}\right),
\end{equation}
\end{small}
such that $h_{p,\ell} \underset{\Delta t_{\ell} \rightarrow 0}{\longrightarrow 0}$.
\end{lemma}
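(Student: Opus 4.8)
\medskip\noindent\emph{Proof proposal.}\quad The plan is to use the closed form of the likelihood ratio to turn the conditional $L^p$ moment into a series indexed by the number of jumps that the importance sampling introduces on the path, keep only the single-jump term, and show the rest is of strictly higher order in $\Delta t_\ell$. Conditioning on $\mathcal{F}_{N_\ell-1}$ and on $\{\mathcal{I}^s_\ell=\mathcal{S}\}$ freezes the rates $\{\Delta a_{\ell,n}\}_{n\in\mathcal{S}}$ and the set $\mathcal{S}$, so the remaining randomness sits in the importance-sampled extra-jump counts $\{k_n\}_{n\in\mathcal{S}}$, whose common rate scale is $c_\ell\Delta t_\ell=\Delta t_\ell^{1-\delta}$. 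Substituting $c_\ell=\Delta t_\ell^{-\delta}$ into \eqref{eq: lik_level_ell} gives, on $\{\mathcal{I}^s_\ell=\mathcal{S}\}$, $L_\ell^p=e^{p(\Delta t_\ell^{1-\delta}-\Delta t_\ell)\sum_{n\in\mathcal{S}}\Delta a_{\ell,n}}\,\Delta t_\ell^{\delta p K}$ with $K:=\sum_{n\in\mathcal{S}}k_n$; the exponential factor is $\mathcal{F}_{N_\ell-1}$-measurable and pulls out of the conditional expectation, so it remains to estimate $\exptpibar{\abs{\Delta g_\ell(T)}^p\,\Delta t_\ell^{\delta p K};(\mathcal{F}_{N_\ell-1},\mathcal{I}^s_\ell=\mathcal{S})}$.

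I would then expand this expectation over the joint values $(m,K_0)$ of $(\abs{\Delta g_\ell(T)},K)$. The contribution of $m=0$ vanishes because of the $\abs{\Delta g_\ell(T)}^p$ factor (this is exactly why importance sampling helps), and the dominant surviving term is $(m,K_0)=(1,1)$: a single importance-sampled jump, which separates the two paths and gives $\abs{\Delta g_\ell(T)}=1$, so this term equals $\Delta t_\ell^{\delta p}$ times the conditional probability of that event, which by Assumption \ref{assumption1_high_dim}(c) equals $e^{-\Delta t_\ell^{1-\delta}\Delta a_{\ell,n^\ast}}\big(\Delta t_\ell^{1-\delta}\Delta a_{\ell,n^\ast}\big)(1+\ordo{1})$. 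Multiplying the powers together, reinstating the prefactor from the first paragraph, and merging the $\ordo{1}$'s into a single $1+h_{p,\ell}$ reproduces exactly the right-hand side of \eqref{eq:moments}.

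What then remains is to show that every other $(m,K_0)$ contributes $\ordo{\Delta t_\ell^{(p-1)\delta+1}}$. The heuristic is that each additional importance-sampled jump contributes a factor $\Delta t_\ell^{1-\delta}$ to the event probability and a factor $\Delta t_\ell^{\delta p}$ to the likelihood weight $\Delta t_\ell^{\delta p K}$, hence a net factor $\Delta t_\ell^{(1-\delta)+\delta p}=\Delta t_\ell^{(p-1)\delta+1}$, which tends to $0$ since $0\le\delta<1$ and $p\ge1$. Concretely I would bound $\abs{\Delta g_\ell(T)}^p\le m^p$, use Assumption \ref{assumption1_high_dim}(b) (the bound $\bar\pi_\ell(\abs{\Delta g_\ell(T)}=m;\cdot)\le\eta_{m,\ell}\Delta t_\ell^{(1-\delta)m}$ with uniformly controlled ratios $\eta_{m+1,\ell}/\eta_{m,\ell}\le\eta$) together with the elementary fact that a configuration with $K_0$ importance-sampled jumps has probability $\Ordo{\Delta t_\ell^{(1-\delta)K_0}}$, and Assumption \ref{assumption2_high_dim} to keep the relevant sums $\sum_{n\in\mathcal{S}}\Delta a_{\ell,n}$ bounded; then the remainder is dominated by a geometrically convergent series of the type $\sum_{q\ge 2}q^p\,\Delta t_\ell^{[(p-1)\delta+1]q}=\Ordo{\Delta t_\ell^{2[(p-1)\delta+1]}}=\ordo{\Delta t_\ell^{(p-1)\delta+1}}$. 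Assumption \ref{assumption1_high_dim}(a) enters here to handle the configurations in which $\abs{\Delta g_\ell(T)}$ is strictly smaller than $K$ (cancellations among the signed $k_n$), which are not directly covered by (b).

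I expect the main obstacle to be precisely this last step, the uniform control of the multi-jump remainder. The likelihood weights $\Delta t_\ell^{\delta p K}$ degenerate to $1$ at $\delta=0$, so for small $\delta$ the required decay has to be extracted entirely from the event probabilities, which forces one to track the jump intensities carefully (this is where Assumption \ref{assumption2_high_dim} is needed); and because $\Delta g_\ell(T)$ can be strictly smaller than $K$ by sign cancellation, one cannot simply identify the two, so the bookkeeping that links the events $\{\abs{\Delta g_\ell(T)}=m\}$ to $\{K=K_0\}$ — which Assumptions \ref{assumption1_high_dim}(a)--(b) are designed to support — is the delicate part. Verifying that the $1+\ordo{1}$ in Assumption \ref{assumption1_high_dim}(c) and the various exponential factors genuinely converge to their limits as $\Delta t_\ell\to 0$, rather than merely staying bounded, is the remaining point that requires care.
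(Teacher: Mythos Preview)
Your approach is essentially the paper's: expand the conditional $p$th moment over the joint values of $(\abs{\Delta g_\ell(T)},K)$, isolate the $(1,1)$ term via Assumption~\ref{assumption1_high_dim}(c), and show the rest is of higher order. The paper organizes this by splitting into the diagonal sum $\sum_i A_i$ (where $\abs{\Delta g_\ell(T)}=K=i$) and the off-diagonal sum $\sum_{i\neq j}B_{ij}$, which is exactly your double sum over $(m,K_0)$.

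A few points where your write-up diverges from the paper's argument. First, you have the roles of Assumptions~\ref{assumption1_high_dim}(a) and~(b) swapped: in the paper, (a) is precisely the statement that the \emph{diagonal} terms $A_i$ with $i\ge 2$ are dominated by $A_1$, and (b) controls the \emph{off-diagonal} terms $B_{ij}$ via the marginal bound on $\bar\pi_\ell(\abs{\Delta g_\ell(T)}=i;\cdot)$; it is not the case that (a) handles $\abs{\Delta g_\ell(T)}<K$. Second, the paper includes a short pathwise argument that $B_{i0}=0$ for all $i\ge 1$ (i.e.\ $K=0\Rightarrow\abs{\Delta g_\ell(T)}=0$): until an IS step fires, the coupled paths remain identical, so with zero IS jumps the paths never separate. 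You implicitly rely on this but do not state it; your remark that ``$m=0$ vanishes'' covers the other direction only. Third, Assumption~\ref{assumption2_high_dim} is not used in the proof of the lemma itself: the statement is conditional on $\mathcal{F}_{N_\ell-1}$ and $\{\mathcal{I}^s_\ell=\mathcal{S}\}$, so $\sum_{n\in\mathcal{S}}\Delta a_{\ell,n}$ is deterministic there and needs no moment bound; Assumption~\ref{assumption2_high_dim} only enters later, when the conditioning is removed in the proofs of Theorems~\ref{thm:kurtosis_high_dim} and~\ref{thm:variance_high_dim}. Finally, your single-index remainder bound $\sum_{q\ge 2} q^p\,\Delta t_\ell^{[(p-1)\delta+1]q}$ does not cover the configurations $(m,K_0)$ with $K_0=1$ and $m\ge 2$; the paper handles these inside the $B_{ij}$ sum via Assumption~\ref{assumption1_high_dim}(b), giving exponent $p\delta+(1-\delta)m>p\delta+(1-\delta)$ for $m\ge 2$.
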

\begin{remark}
Note that for $J \ge 1$, Lemma \ref{lemma: moments} is extended to the multi-channel case by expressing the right-hand side of \ref{eq:moments} as  a summation over the set $\mathcal{J}_1$ of similar terms but involving $\Delta a^j_{\ell,n}$ instead of $\Delta a_{\ell,n}$.  These terms correspond to only one jump occurring under the new measure and due to the firing of only one reaction channel $j \in  \mathcal{J}_1$.
\end{remark}
Finally a further assumption (Assumption \ref{assump:integrability_assump}) is needed to prove   Theorems \ref{thm:kurtosis_high_dim} and \ref{thm:variance_high_dim}
\begin{assumption}\label{assump:integrability_assump}
For a sufficiently large $\ell$, we assume that there exists a constant $C_p$, independent of $\ell$,  such that $h_{p,\ell}$ in Lemma \ref{lemma: moments}  fulfills  $\exptpibar{h_{p,\ell}^2}\le C_p <\infty$.
\end{assumption}
Theorem \ref{thm:kurtosis_high_dim} shows that the kurtosis at level $\ell$ of the MLMC estimator combined with our IS algorithm, $\kappa_{\ell}$, is $\Ordo{\Delta t_{\ell}^{\delta-1}}$.
\begin{theorem}\label{thm:kurtosis_high_dim}
Let $J\ge 1$ and  let us denote $Y_{\ell}:=\Delta g_{\ell}(T) L_{\ell}$. Suppose that Assumptions  \ref{assumption1_high_dim}, \ref{assumption2_high_dim}  and \ref{assump:integrability_assump} hold. Then, for $0\le \delta<1$ and $\Delta t_{\ell} \rightarrow 0$, we have 
\begin{equation}\label{eq:kurtosis_approximate_formula_high_dim}
\kappa_{\ell}:=\frac{\exptpibar{\left(Y_{\ell}-\exptpibar{Y_{\ell}}\right)^4}}{\left(\text{Var}_{\bar{\pi}_{\ell}}\left[Y_{\ell}\right]\right)^2}=\Ordo{\Delta t_{\ell}^{\delta-1}}.
\end{equation}
\end{theorem}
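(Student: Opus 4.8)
The plan is to reduce everything to the conditional $L^p$ moment estimates of Lemma \ref{lemma: moments} (in its multi-channel form, see the Remark following it) for $p=1,2,3,4$, and then assemble the fourth central moment and the squared variance of $Y_{\ell}$ from these. Since $L_{\ell}>0$, we have $\abs{Y_{\ell}}^p=\abs{\Delta g_{\ell}}^p(T)L_{\ell}^p$, so for even $p$ the moment $\exptpibar{Y_{\ell}^p}$ equals $\exptpibar{\abs{Y_{\ell}}^p}$, whereas for odd $p$ we only ever need $\abs{\exptpibar{Y_{\ell}^p}}\le\exptpibar{\abs{Y_{\ell}}^p}$; hence the sign issues present in \eqref{eq: local_error_expression} and in Lemma \ref{lemma: moments} are harmless here.

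First I would take $\exptpibar{\cdot}$ over $\mathcal{F}_{N_{\ell}-1}$ and the random index sets $(\mathcal{I}^s_{\ell,j})_{j\in\mathcal{J}_1}$ in \eqref{eq:moments}. As $\Delta t_{\ell}\to 0$, since $0\le\delta<1$ both $\Delta t_{\ell}^{1-\delta}$ and $\Delta t_{\ell}$ vanish, and by Assumption \ref{assumption2_high_dim} the sums $\sum_{n}\Delta a^j_{\ell,n}$ appearing in the exponents of \eqref{eq:moments} stay bounded, so the exponential factors tend to $1$; the factor $1+h_{p,\ell}$ is controlled via Assumption \ref{assump:integrability_assump} together with the pointwise convergence $h_{p,\ell}\to 0$ (the bound $\exptpibar{h_{p,\ell}^2}\le C_p$ gives uniform integrability and hence $\exptpibar{h_{p,\ell}}\to 0$). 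The two-sided bound $0<C_1\le\sum_{j\in\mathcal{J}_1}\exptpibar{e^{-\Delta a^j_{\ell,n_j^\ast}}\Delta a^j_{\ell,n_j^\ast}}\le C_2<\infty$ from Assumption \ref{assumption2_high_dim} then yields, for each fixed $p\ge 1$ and all sufficiently large $\ell$, constants $0<a_p\le b_p<\infty$ independent of $\ell$ with
\begin{equation*}
a_p\,\Delta t_{\ell}^{(p-1)\delta+1}\ \le\ \exptpibar{\abs{Y_{\ell}}^p}\ \le\ b_p\,\Delta t_{\ell}^{(p-1)\delta+1}.
\end{equation*}
In particular $\exptpibar{\abs{Y_{\ell}}}=\Theta(\Delta t_{\ell})$, $\exptpibar{Y_{\ell}^2}=\Theta(\Delta t_{\ell}^{1+\delta})$, $\exptpibar{\abs{Y_{\ell}}^3}=\Theta(\Delta t_{\ell}^{1+2\delta})$, and $\exptpibar{Y_{\ell}^4}=\Theta(\Delta t_{\ell}^{1+3\delta})$. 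I would also record $\exptpibar{Y_{\ell}}=\exptpibar{\Delta g_{\ell}(T)L_{\ell}}=E_{\mathbb{P}_{\ell}}[\Delta g_{\ell}(T)]=\Ordo{\Delta t_{\ell}}$, either by weak convergence of the tau-leap scheme or simply from $\abs{\exptpibar{Y_{\ell}}}\le\exptpibar{\abs{Y_{\ell}}}$.

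Next I would handle the denominator: $\text{Var}_{\bar{\pi}_{\ell}}\left[Y_{\ell}\right]=\exptpibar{Y_{\ell}^2}-(\exptpibar{Y_{\ell}})^2=\Theta(\Delta t_{\ell}^{1+\delta})-\Ordo{\Delta t_{\ell}^2}$, and since $1+\delta<2$ the first term dominates, so $\text{Var}_{\bar{\pi}_{\ell}}\left[Y_{\ell}\right]=\Theta(\Delta t_{\ell}^{1+\delta})$ and $\left(\text{Var}_{\bar{\pi}_{\ell}}\left[Y_{\ell}\right]\right)^2=\Theta(\Delta t_{\ell}^{2+2\delta})$. For the numerator, expand
\begin{equation*}
\exptpibar{\left(Y_{\ell}-\exptpibar{Y_{\ell}}\right)^4}=\exptpibar{Y_{\ell}^4}-4\exptpibar{Y_{\ell}^3}\exptpibar{Y_{\ell}}+6\exptpibar{Y_{\ell}^2}(\exptpibar{Y_{\ell}})^2-3(\exptpibar{Y_{\ell}})^4 ,
\end{equation*}
and bound the last three terms by $\Ordo{\Delta t_{\ell}^{2+2\delta}}$, $\Ordo{\Delta t_{\ell}^{3+\delta}}$, and $\Ordo{\Delta t_{\ell}^{4}}$ respectively, using the moment orders above and $\abs{\exptpibar{Y_{\ell}}}=\Ordo{\Delta t_{\ell}}$. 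Because $0\le\delta<1$, each of the exponents $2+2\delta$, $3+\delta$, $4$ strictly exceeds $1+3\delta$, so $\exptpibar{Y_{\ell}^4}=\Theta(\Delta t_{\ell}^{1+3\delta})$ is the dominant term and $\exptpibar{\left(Y_{\ell}-\exptpibar{Y_{\ell}}\right)^4}=\Theta(\Delta t_{\ell}^{1+3\delta})$. Dividing gives $\kappa_{\ell}=\Theta(\Delta t_{\ell}^{1+3\delta})/\Theta(\Delta t_{\ell}^{2+2\delta})=\Ordo{\Delta t_{\ell}^{\delta-1}}$, as claimed.

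The step I expect to be the main obstacle is the passage from the conditional estimate of Lemma \ref{lemma: moments} to the unconditional moments: one must verify that taking expectations over $\mathcal{F}_{N_{\ell}-1}$ and $(\mathcal{I}^s_{\ell,j})_{j\in\mathcal{J}_1}$ neither destroys the lower bound on $\exptpibar{Y_{\ell}^2}$ (which is what keeps the denominator from being too small) nor inflates the upper bound on $\exptpibar{Y_{\ell}^4}$, and this is precisely where Assumptions \ref{assumption2_high_dim} and \ref{assump:integrability_assump}, together with the multi-channel extension of Lemma \ref{lemma: moments}, do the work; the remainder is just bookkeeping of powers of $\Delta t_{\ell}$.
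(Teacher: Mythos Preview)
Your proposal is correct and follows essentially the same route as the paper: apply Lemma \ref{lemma: moments} for $p=1,2,3,4$, pass to unconditional moments via Assumptions \ref{assumption2_high_dim} and \ref{assump:integrability_assump}, expand the fourth central moment, and compare powers of $\Delta t_{\ell}$ to see that $\exptpibar{Y_{\ell}^4}$ and $\exptpibar{Y_{\ell}^2}$ dominate numerator and denominator respectively. The paper organizes the same computation by showing the ratios $I_1=(\exptpibar{Y_{\ell}})^2/\exptpibar{Y_{\ell}^2}$ and $I_2,I_3,I_4$ (the lower-order terms in the expansion divided by $\exptpibar{Y_{\ell}^4}$) all tend to zero, which is exactly equivalent to your $\Theta$-bound bookkeeping; your identification of the unconditioning step as the only delicate point is accurate.
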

Theorem  \ref{thm:kurtosis_high_dim} clearly shows  the effect seen for the limiting case $\delta=0$ where we do not apply the IS algorithm and thus, the kurtosis $\kappa_{\ell}$ increases at a rate of $\Delta t ^{-1}_{\ell}$.   Compared to  the case without  IS, we reduce the kurtosis by a factor of $\Delta t^{-\delta}_{\ell}$. Theorem  \ref{thm:kurtosis_high_dim} is confirmed by our numerical experiments in Section \ref{sec:num_experiments}.

Let us fix $0\le \delta<1$. We show in Theorem \ref{thm:variance_high_dim} that the strong convergence rate is $\beta=\delta+1$.
\begin{theorem}\label{thm:variance_high_dim}
Let   $0\le \delta<1$ and  $J\ge 1$ and  let us denote $Y_{\ell}:=\Delta g_{\ell}(T) L_{\ell}$. Suppose that Assumptions  \ref{assumption1_high_dim}, \ref{assumption2_high_dim}  and \ref{assump:integrability_assump} hold. Then, for $\Delta t_{\ell} \rightarrow 0$,  we have 
\begin{equation*}
\text{Var}_{\bar{\pi}_{\ell}}\left[Y_{\ell} \right] =\Ordo{\Delta t_{\ell}^{1+\delta}}.
\end{equation*}
\end{theorem}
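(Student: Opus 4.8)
The plan is to dominate the variance by the raw second moment and then read the latter off from Lemma~\ref{lemma: moments} with $p=2$. Since $\text{Var}_{\bar{\pi}_{\ell}}\left[Y_{\ell}\right]\le\exptpibar{Y_{\ell}^2}=\exptpibar{\abs{\Delta g_{\ell}(T)}^2 L_{\ell}^2}$, it suffices to show $\exptpibar{Y_{\ell}^2}=\Ordo{\Delta t_{\ell}^{1+\delta}}$. The discarded term $\left(\exptpibar{Y_{\ell}}\right)^2$ equals $\left(\expt{g(\mathbf{Z}_{\ell}(T))-g(\mathbf{Z}_{\ell-1}(T))}\right)^2$ since $L_{\ell}=d\mathbb{P}_{\ell}/d\bar{\pi}_{\ell}$, hence is $\Ordo{\Delta t_{\ell}^2}=\ordo{\Delta t_{\ell}^{1+\delta}}$ by the $\Ordo{\Delta t_{\ell}}$ weak rate and $\delta<1$; it therefore plays no role in the upper bound (only in a matching lower bound).

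First I would condition on $\left(\mathcal{F}_{N_{\ell}-1},(\mathcal{I}^s_{\ell,j}=\mathcal{S}_j)_{j\in\mathcal{J}_1}\right)$ and apply the multi-channel form of Lemma~\ref{lemma: moments} (the variant in the Remark right after it, where the right-hand side of \eqref{eq:moments} becomes a finite sum over $j\in\mathcal{J}_1$ of analogous terms built from $\Delta a^j_{\ell,n}$, each corresponding to exactly one jump, from channel $j$, under $\bar{\pi}_{\ell}$). With $p=2$ this yields, as $\Delta t_{\ell}\to 0$,
\begin{equation*}
\exptpibar{Y_{\ell}^2 ; \left(\mathcal{F}_{N_{\ell}-1},(\mathcal{I}^s_{\ell,j}=\mathcal{S}_j)_{j\in\mathcal{J}_1}\right)} = \Delta t_{\ell}^{1+\delta}\, R_{\ell}\,(1+h_{2,\ell}),\qquad R_{\ell}:=\sum_{j\in\mathcal{J}_1}\Delta a^j_{\ell,n_j^\ast}\, e^{2(\Delta t_{\ell}^{1-\delta}-\Delta t_{\ell})\sum_{q\in\mathcal{J}_1}\sum_{n\in\mathcal{S}_q}\Delta a^q_{\ell,n}}\, e^{-\Delta t_{\ell}^{1-\delta}\Delta a^j_{\ell,n_j^\ast}},
\end{equation*}
with $h_{2,\ell}\to 0$ and $R_{\ell}\ge 0$.

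Next I would bound $R_{\ell}$ uniformly in $\ell$ and integrate. Recalling $\Delta a^j_{\ell,n}=\Ordo{1}$ (noted just before Assumption~\ref{assumption2_high_dim}), so $\Delta a^j_{\ell,n_j^\ast}\le C_a$ for an $\ell$-independent $C_a$; using $e^{-\Delta t_{\ell}^{1-\delta}\Delta a^j_{\ell,n_j^\ast}}\le 1$; and noting that $1-\delta>0$ makes $2(\Delta t_{\ell}^{1-\delta}-\Delta t_{\ell})\le 1/2$ once $\Delta t_{\ell}$ is small while each $\Delta a^q_{\ell,n}\ge 0$, I obtain
\begin{equation*}
0\le R_{\ell}\le C_a\,(\#\mathcal{J}_1)\, e^{(1/2)\sum_{q\in\mathcal{J}_1}\sum_{n\in\mathcal{I}^s_{\ell,q}}\Delta a^q_{\ell,n}},\qquad\text{hence}\qquad \exptpibar{R_{\ell}^2}\le C_a^2\,(\#\mathcal{J}_1)^2\,\exptpibar{e^{\sum_{q\in\mathcal{J}_1}\sum_{n\in\mathcal{I}^s_{\ell,q}}\Delta a^q_{\ell,n}}}\le C_a^2\,(\#\mathcal{J}_1)^2\,C_2,
\end{equation*}
the last inequality by Assumption~\ref{assumption2_high_dim}. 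Then Cauchy--Schwarz together with $\exptpibar{h_{2,\ell}^2}$ being bounded uniformly in $\ell$ (Assumption~\ref{assump:integrability_assump}) gives
\begin{equation*}
\exptpibar{Y_{\ell}^2}=\Delta t_{\ell}^{1+\delta}\,\exptpibar{R_{\ell}(1+h_{2,\ell})}\le \Delta t_{\ell}^{1+\delta}\Bigl(\exptpibar{R_{\ell}}+\sqrt{\exptpibar{R_{\ell}^2}}\,\sqrt{\exptpibar{h_{2,\ell}^2}}\Bigr)\le C\,\Delta t_{\ell}^{1+\delta},
\end{equation*}
for an $\ell$-independent constant $C$; this is $\Ordo{\Delta t_{\ell}^{1+\delta}}$, which proves the statement.

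The main obstacle is exactly this passage from the conditional estimate to the unconditional bound: Lemma~\ref{lemma: moments} is pointwise in $\Delta t_{\ell}$ with an error $h_{2,\ell}$ vanishing only pointwise, and the surviving factor $R_{\ell}$ carries a random sum of propensity differences accumulated over the (random) IS steps, so the whole argument rests on the two uniform-integrability inputs: Assumption~\ref{assumption2_high_dim}, a uniform exponential moment for $\sum_{j,n}\Delta a^j_{\ell,n}$ (which, since $\Delta a^j_{\ell,n}=\Ordo{1}$, essentially says $\expt{\sum_j\#\mathcal{I}^s_{\ell,j}}=\Ordo{1}$), and Assumption~\ref{assump:integrability_assump}, a uniform $L^2$ bound on $h_{2,\ell}$. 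Finally, rerunning the same computation while retaining the lower-bound constant $C_1$ of Assumption~\ref{assumption2_high_dim} and the leading term of \eqref{eq:moments} (rather than only its upper bound) gives $\text{Var}_{\bar{\pi}_{\ell}}\left[Y_{\ell}\right]\ge c\,\Delta t_{\ell}^{1+\delta}$ for some $c>0$, so the strong convergence rate is in fact exactly $\beta=1+\delta$.
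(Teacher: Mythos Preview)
Your proposal is correct and follows essentially the same route as the paper: bound the variance by the raw second moment, condition on $\left(\mathcal{F}_{N_{\ell}-1},(\mathcal{I}^s_{\ell,j}=\mathcal{S}_j)_{j\in\mathcal{J}_1}\right)$, apply Lemma~\ref{lemma: moments} with $p=2$, and then invoke Assumptions~\ref{assumption2_high_dim} and~\ref{assump:integrability_assump} to control the outer expectation. The only difference is that the paper reuses the asymptotic equivalence $\text{Var}_{\bar{\pi}_{\ell}}[Y_{\ell}]\sim\exptpibar{Y_{\ell}^2}$ already established in the proof of Theorem~\ref{thm:kurtosis_high_dim}, whereas you take the cleaner shortcut $\text{Var}\le\exptpibar{Y_{\ell}^2}$; and you spell out explicitly (via the $R_{\ell}$ bound and Cauchy--Schwarz) how the two assumptions deliver the $\ell$-uniform constant, which the paper leaves implicit.
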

The result in Theorem \ref{thm:variance_high_dim} is confirmed by the  numerical experiments in Section \ref{sec:num_experiments},  which demonstrate that our IS algorithm improves the strong convergence rate from $\beta=1$ (see Figures \ref{fig:MLMC_decay_X_0_10}, \ref{fig:MLMC_exp2_X_1}, and  \ref{fig:MLMC_exp4_X_3}) to $\beta=1+\delta$ with $\delta>0$ (see Figures   \ref{fig: MLMC estimator_imp_sampling choice_1delta_05_exp1} and \ref{fig: MLMC estimator_imp_sampling choice_1delta_075_exp1} for   Example \ref{exp:decay},  \ref{fig: MLMC estimator_imp_sampling choice_1delta_05_exp2} and \ref{fig: MLMC estimator_imp_sampling choice_1delta_075_exp2} for  Example \ref{exp:Gene transcription and translation}, and Figures  \ref{fig: MLMC estimator_imp_sampling choice_1delta_05_exp3} and \ref{fig: MLMC estimator_imp_sampling choice_1delta_075_exp3} for  Example \ref{exp:Michaelis–Menten enzyme kinetics}) for the case with  IS.   Due to  Theorem \ref{thm:MLMC_comlexity}, and given that $\gamma=1$ (work rate) for both cases, with and without  IS, we  improve the complexity of the MLMC method from $\Ordo{\text{TOL}^{-2} \log(\text{TOL})^2}$ for the case without IS  to the optimal complexity, \ie,  $\Ordo{\text{TOL}^{-2}}$, for the case with  IS, where  $\text{TOL}$ is  a pre-selected tolerance.
\begin{remark}[More general observable $g$]
For ease of presentation, we formulate our assumptions and show our  proofs for an observable $g$ in the class of projections. However, our results can be easily extended to include  linear maps, and linear combination of indicator functions.
\end{remark}

\subsection{Cost Analysis}\label{sec:Cost analysis}
In this section, we  analyze briefly  the computational costs when using MLMC with our IS technique compared to  standard MLMC, in the context of SRNs. Let   $M_{\ell}$  denote the number of samples at level $\ell$, and $W_{\ell,\text{sample}}$  the expected cost per sample path at level $\ell$. Observe that the expected computational cost of the MLMC estimator is given by 
\begin{align*}
W_{\text{MLMC}}:=\sum_{\ell=0}^L  M_{\ell} W_{\ell,\text{sample}},
\end{align*}
If we denote  by   $W^{\text{with IS}}_{\ell,\text{sample}}$ and $W^{\text{without IS}}_{\ell,\text{sample}}$  the expected costs of simulating one sample path  at level $\ell$ with  and without IS, respectively, then we have 
\begin{align}\label{eq:cost_MLMC_IS}
W^{\text{without IS}}_{\ell,\text{sample}} &\approx 2 \times J \times C_p \times \Delta t_{\ell}^{-1}\nonumber\\
 W^{\text{with IS}}_{\ell,\text{sample}}&\approx 2 \times J \times C_p \times \Delta t_{\ell}^{-1}+ \underset{\ll C_p} {\underbrace{C_{\text{lik}}}}  \times \underset{\ll \Delta t_{\ell}^{-1}}{\underbrace{\overline{\sum_{j \in \mathcal{J}_1} \# \mathcal{I}^s_{\ell,j}}}} \approx W^{\text{without IS}}_{\ell,\text{sample}} \COMMA
\end{align}
where $C_p$ is the  cost of generating one Poisson rdv, $C_{\text{lik}}$ is the cost of computing the likelihood ratio, and  $\overline{\sum_{j \in \mathcal{J}_1}\# \mathcal{I}^s_{\ell,j}}$ is  the average number of time steps at level $\ell$, where we simulate under the new measure  the $j$th reaction channel. We note that the inequality $\overline{\sum_{j \in \mathcal{J}_1}\# \mathcal{I}^s_{\ell,j}}  \ll \Delta t_{\ell}^{-1}$ is motivated  and justified by  the construction of our IS algorithm, where IS is only applied a few times across each simulated path. This is also confirmed by Figure \ref{fig: IS steps}.  Furthermore, we refer to  Figure \ref{fig: cost_comparison} for  evidence of  the observation made by \eqref{eq:cost_MLMC_IS}.
\FloatBarrier
\begin{figure}[h!]
	\centering
	\begin{subfigure}{.47\textwidth}
		\centering
		\includegraphics[width=1\linewidth]{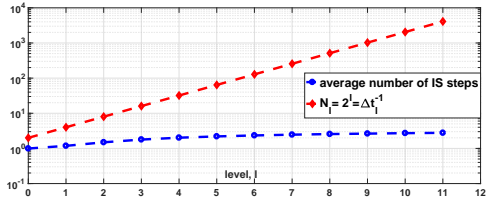}
		\caption{Example \ref{exp:decay}.}
		\label{fig: IS steps decay}
	\end{subfigure}
	\begin{subfigure}{.47\textwidth}
		\centering
		\includegraphics[width=1\linewidth]{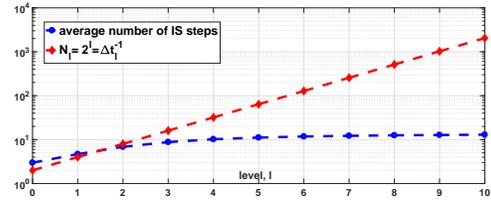}
		\caption{Example \ref{exp:Michaelis–Menten enzyme kinetics}.}
		\label{fig: IS steps example4}
	\end{subfigure}
	\caption{Average number of time steps for different MLMC levels, $\overline{\sum_{j \in \mathcal{J}_1}\# \mathcal{I}^s_{\ell,j}}$, with  IS (with $\delta=\frac{3}{4}$),  with $10^5$ samples.  This Figure shows  that $\overline{\sum_{j \in \mathcal{J}_1}\# \mathcal{I}^s_{\ell,j}} = \Ordo{1} \ll \Delta t_{\ell}^{-1}$.}
	\label{fig: IS steps}
\end{figure}
\FloatBarrier
\begin{figure}[h!]
	\centering
	\begin{subfigure}{.47\textwidth}
		\centering
		\includegraphics[width=1\linewidth]{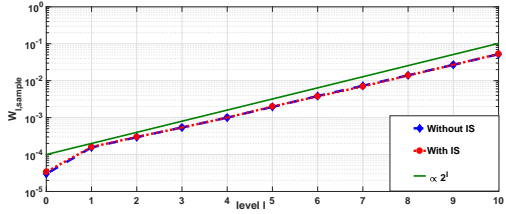}
		\caption{Example \ref{exp:Gene transcription and translation}.}
		\label{fig: cost_comparison example2}
	\end{subfigure}
	\begin{subfigure}{.47\textwidth}
		\centering
		\includegraphics[width=1\linewidth]{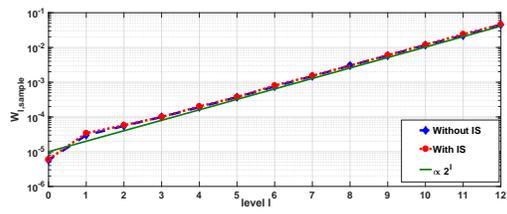}
		\caption{Example \ref{exp:Michaelis–Menten enzyme kinetics}.}
		\label{fig: cost_comparison example3}
	\end{subfigure}
	\caption{Comparison of the average cost per sample path per level (in CPU time and estimated with $10^6$ samples).  MLMC combined with  IS (with $\delta=\frac{3}{4}$) has  the same  average cost per sample path per level, as  standard MLMC.}
	\label{fig: cost_comparison}
\end{figure}
\FloatBarrier
Furthermore, if we denote by $V_{\ell}=\text{Var}\left[g_{\ell}-g_{\ell-1}\right]$, then   from our analysis in Section \ref{sec:Generalization of our method to multi-channels and high dimensional states}, and  our numerical experiments in Section \ref{sec:num_experiments}, it is shown that  $V_{\ell}^{\text{with IS}} \ll V_{\ell}^{\text{without IS}}$ implying that  $\Rightarrow$ $M_{\ell}^{\text{with IS}} \ll M_{\ell}^{\text{without IS}}$ (see Figure \ref{fig:  samples_comparison}).

Therefore, we conclude that combining our pathwise IS with the MLMC estimator not only improves its robustness and  convergence behavior, but also  significantly reduces  the cost. 
\FloatBarrier
\begin{figure}[h!]
	\centering
	\begin{subfigure}{.47\textwidth}
		\centering
		\includegraphics[width=1\linewidth]{./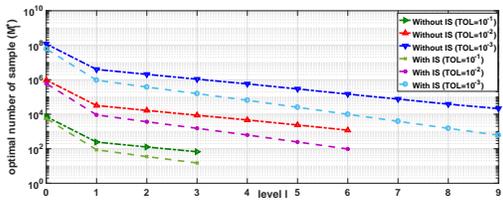}
		\caption{Example \ref{exp:Gene transcription and translation}.}
		\label{fig: samples_comparison example2}
	\end{subfigure}
	\begin{subfigure}{.47\textwidth}
		\centering
		\includegraphics[width=1\linewidth]{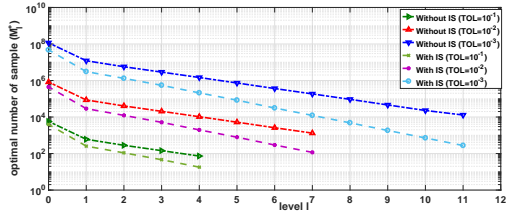}
		\caption{Example \ref{exp:Michaelis–Menten enzyme kinetics}.}
		\label{fig: samples_comparison example3}
	\end{subfigure}
	\caption{Comparison of the optimal number  of samples per level, using \eqref{eq:optimal_number_samples}, for different pre-selected tolerances $\text{TOL}$ for the MLMC estimator. For a fixed $\text{TOL}$ , the optimal number of samples per level of  MLMC with IS (with $\delta=\frac{3}{4}$) is significantly smaller than the one  of MLMC without IS.}
	\label{fig:  samples_comparison}
\end{figure}
\FloatBarrier

\section{Numerical Experiments}\label{sec:num_experiments}
In the following, we illustrate the main benefits of the  MLMC-based method, when used in combination with  our IS algorithm (explained in Section \ref{sec:About the choice of the new measure and the importance sampling algorithm}), compared to the standard MLMC   used in \cite{Anderson2012}. We consider three different examples of SRNs, given by Examples \ref{exp:decay}, \ref{exp:Gene transcription and translation}, and \ref{exp:Michaelis–Menten enzyme kinetics}, where we use  the MLMC method  to estimate $E[g\left(\mathbf{X}(T)\right)]$, where  $\mathbf{X}$ is the state vector representing the counting number of each species in the system, $g:  \rset^d \rightarrow \rset$ is a given  scalar observable of $\mathbf{X}$, and $T>0$ is a user-selected final time.   We note that our numerical results were  obtained using an Intel(R) Xeon(R) CPU E5-2680 architecture. Furthermore, the computer code is written in the MATLAB programming language (version R2019a), and it can be downloaded from  \blue{\url{https://github.com/hammouc/MLMC_IS_SRNs}}.

\begin{example}[Decay example]\label{exp:decay}
This model has one reaction,
\begin{equation*}
X \overset{\theta_1}{\rightarrow} \emptyset,
\end{equation*}
with $\theta_1=1$, $T=1$, and $X_0=10$. The stoichiometric scalar $\nu=-1$ and the propensity function $a(x)=\theta_1 x$. The quantity of interest in this example is $E[X(T)]$. 
\end{example}
\begin{example}[Gene transcription and translation \cite{Anderson2012}]\label{exp:Gene transcription and translation}
This model has five reactions,
\begin{align*}
\emptyset &\overset{\theta_1}{\rightarrow} R, \quad R \overset{\theta_2}{\rightarrow} R+P \nonumber\\
2P &\overset{\theta_3}{\rightarrow} D, \quad R \overset{\theta_4}{\rightarrow} \emptyset \\
 P &\overset{\theta_5}{\rightarrow} \emptyset \nonumber
\end{align*}
with  $\boldsymbol{\theta}=(25, 10^3, 0.001, 0.1, 1)$, $T=1$, $\mathbf{X}(t)=(R(t), P(t), D(t))$ and $\mathbf{X}_0=(0, 0, 0)$. The stoichiometric matrix  and the propensity functions are given by
\begin{small}
\begin{equation*}
\boldsymbol{\nu}= 
\left({\begin{array}{ccc} 1 & 0 & 0  \\ 0 & 1 & 0 \\ 0 & -2 & 1 \\-1 & 0 & 0 \\ 0  & -1  & 0 \end{array}}\right) ,\quad
a(\mathbf{X})=\left(\begin{array}{ccc} \theta_1    \\ \theta_2 R \\ \theta_3 P (P-1) \\ \theta_4 R  \\ \theta_5 P  \end{array}\right)
\end{equation*} 
\end{small}
The quantity of interest is $E[X^{(1)}(T)]$. We note that the choice of $X^{(1)}$ as the target species  was determined by  selecting the $i$th species with the highest probability of having $\overline{X}^{(i)}_{\ell}(T)-\overline{X}^{(i)}_{\ell-1}(T)=0$ on the deep levels, resulting in  the most severe \textit{catastrophic coupling} explained in Section \ref{sec:Catastrophic coupling}.  In this example, the coarsest level of the MLMC estimator is  $L_0=2$.
\end{example}
\begin{example}[Michaelis-Menten enzyme kinetics \cite{rao2003stochastic}]\label{exp:Michaelis–Menten enzyme kinetics}
The catalytic conversion of a substrate, $S$, into a product, $P$, via an enzymatic reaction involving enzyme, $E$. This
is described by Michaelis-Menten enzyme kinetics with  three  reactions,
\begin{align*}
E+S &\overset{\theta_1}{\rightarrow} C, \quad C \overset{\theta_2}{\rightarrow} E+S \nonumber\\
C &\overset{\theta_3}{\rightarrow} E+P, 
\end{align*}
with  $\boldsymbol{\theta}=(0.001, 0.005, 0.01)$, $T=1$, $\mathbf{X}(t)=(E(t), S(t), C(t), P(t))$ and $\mathbf{X}_0=(100,100, 0, 0)$. The stoichiometric matrix  and the propensity functions are given by
\begin{small}
\begin{equation*}
\boldsymbol{\nu}= 
\left({\begin{array}{cccc} -1 & -1 & 1 & 0  \\  1 & 1 & -1 & 0 \\  1 & 0 & -1 & 1  \end{array}}\right) ,\quad
a(\mathbf{X})=\left(\begin{array}{ccc} \theta_1 E S   \\ \theta_2 C \\  \theta_3 C \end{array}\right)
\end{equation*} 
\end{small}
The quantity of interest in this example is $E[X^{(3)}(T)]$.
\end{example}
We show in Table \ref{table:Comparison of  convergence rates for the different numerical examples with and without importance sampling algorithm.} the summarized results, related  to the convergence rates,  for the different scenarios without/with IS, and for the different examples that we consider in our numerical experiments.  We also show several cases depending on the parameter, $\delta$, used in the IS algorithm. From this table, we can see that our IS  algorithm, besides dramatically reducing  the kurtosis,  improves the strong convergence rates from $1$ to $1+\delta$, which then improves the total complexity of the MLMC estimator from $\Ordo{\text{TOL}^{-2} \log(\text{TOL})^2)}$ to  $\Ordo{\text{TOL}^{-2}}$, where $\text{TOL}$ is a pre-selected tolerance. This improvement is confirmed by Figure \ref{fig: complexity_comparison}, which shows that MLMC, when used in combination  with our IS  algorithm, achieves the same numerical complexity, $\Ordo{\text{TOL}^{-2}}$ as MC with an exact method (SSA), but with a significantly smaller  constant.   The detailed convergence plots for each example are presented  in Sections \ref{sec:umerical Results of MLMC Without IS} and \ref{sec: Numerical Results of MLMC With IS}.

Figure \ref{fig: complexity_comparison} illustrates  the improvement of the complexity rate compared to standard MLMC. For both examples \ref{exp:Gene transcription and translation} and \ref{exp:Michaelis–Menten enzyme kinetics},  MLMC, when used in combination  with our IS  algorithm,  significantly outperforms  the standard MLMC. In particular, to achieve a desired accuracy of $\text{TOL}=10^{-3}$ in example \ref{exp:Gene transcription and translation}, MLMC with IS ($\delta=\frac{3}{4}$)  requires  around $30\%$ of the total work (in CPU time) of MLMC without IS. To achieve the same accuracy  in example \ref{exp:Michaelis–Menten enzyme kinetics},  MLMC with IS ($\delta=\frac{3}{4}$) requires around $18\%$ of the total work  of MLMC without IS. We note that the different parameters of the MLMC estimator such as i) the coarsest level, $L_0$, the deepest level, $L$, and the optimal number of samples $\{M_{\ell}\}_{\ell=L_0}^L$, were selected using a similar procedure to the procedure in \cite{hammouda2017multilevel}.
\FloatBarrier
\begin{small}
\begin{table}[h!]
	\centering
	\begin{tabular}{l*{6}{c}r}
		\toprule[1.5pt]
	 Example  &  $\alpha$  & $\beta$ &  $\gamma$   &$\kappa_{L}$ \\
	\hline
	      Example \ref{exp:decay} without IS   & $1.04$ & $1.03$ & $1$ &  $2191$ \\
		 Example \ref{exp:decay} with IS ($\delta=1/4$)  & $1.04$  & $1.27$ & $1$  & $275$\\
		 Example \ref{exp:decay} with IS ($\delta=1/2$)  & $1.04$  & $1.57$ & $1$  & $34.2$\\
		  Example \ref{exp:decay} with IS ($\delta=3/4$)  & $1.04$  & $1.93$ & $1$ & $5.1$ \\
		 \hline
	  Example \ref{exp:Gene transcription and translation} without IS  & $1$ & $0.99$ & $1$   &  $3290$ \\
	   Example \ref{exp:Gene transcription and translation} with  IS  ($\delta=1/4$) & $1$  & $1.23$ & $1$ & $409$  \\
		 Example \ref{exp:Gene transcription and translation} with IS  ($\delta=1/2$) & $1$  & $1.47$ & $1$  & $50$  \\
		  Example \ref{exp:Gene transcription and translation} with IS ($\delta=3/4$) & $1$  & $1.72$ & $1$ & $5.8$  \\
		  \hline
		   Example \ref{exp:Michaelis–Menten enzyme kinetics} without IS   & $1.02$ & $1.03$ & $1$  &  $1130$   \\
	 Example \ref{exp:Michaelis–Menten enzyme kinetics} with IS  ($\delta=1/4$) & $1.02$  & $1.26$ & $1$ & $208$   \\
		 Example \ref{exp:Michaelis–Menten enzyme kinetics} with  IS  ($\delta=1/2$) & $1.02$  & $1.5$ & $1$   & $36.7$   \\
		  Example \ref{exp:Michaelis–Menten enzyme kinetics} with IS  ($\delta=3/4$) & $1.03$  & $1.75$ & $1$  & $5.9$  \\
		\bottomrule[1.25pt]
	\end{tabular}
	\caption{Comparison of  convergence rates ($\alpha$,  $\beta$,  $\gamma$), and  the kurtosis at the deepest levels of MLMC, $\kappa_L$,  for the different  examples with and without the IS algorithm. $\alpha, \beta,\gamma$ are the estimated rates of weak convergence, strong convergence and computational work, respectively, with a number of samples $M=10^6$. The detailed  convergence plots  are presented   in Sections \ref{sec:umerical Results of MLMC Without IS} and \ref{sec: Numerical Results of MLMC With IS}.}
	\label{table:Comparison of  convergence rates for the different numerical examples with and without importance sampling algorithm.}
\end{table}
\end{small}
\FloatBarrier
\begin{figure}[h!]
	\centering
	\begin{subfigure}{.5\textwidth}
		\centering
		\includegraphics[width=1\linewidth]{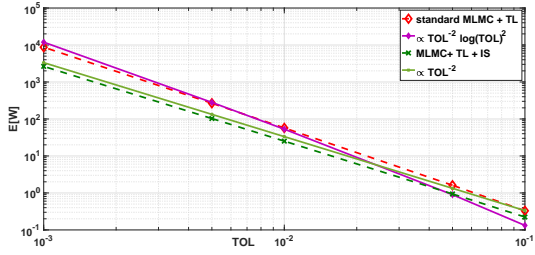}
		\caption{Example \ref{exp:Gene transcription and translation}.}
		\label{fig: complexity_comparison example2_1}
	\end{subfigure}%%
	\begin{subfigure}{.5\textwidth}
		\centering
		\includegraphics[width=1\linewidth]{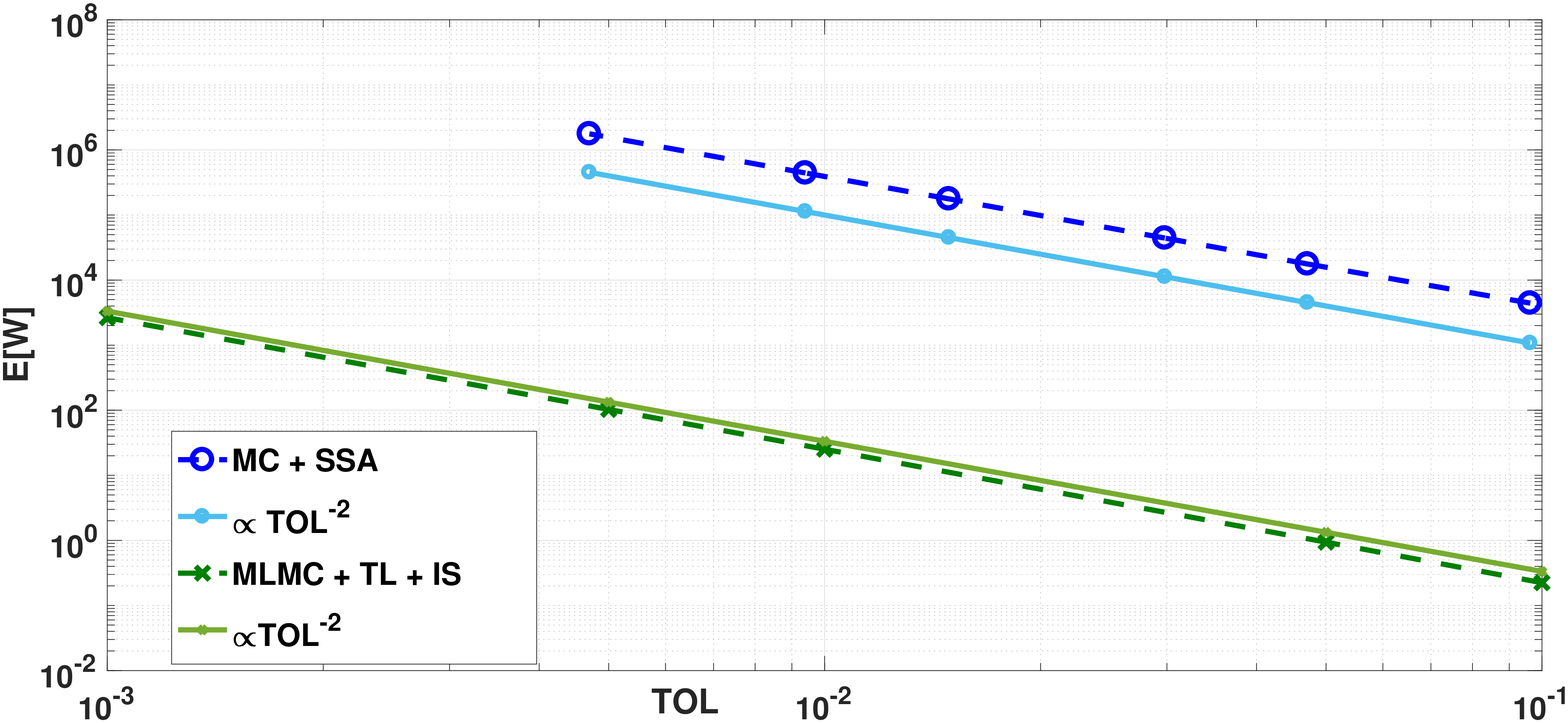}
		\caption{Example \ref{exp:Gene transcription and translation}}
		\label{fig: complexity_comparison example2_1}
	\end{subfigure}
	\begin{subfigure}{.5\textwidth}
		\centering
		\includegraphics[width=1\linewidth]{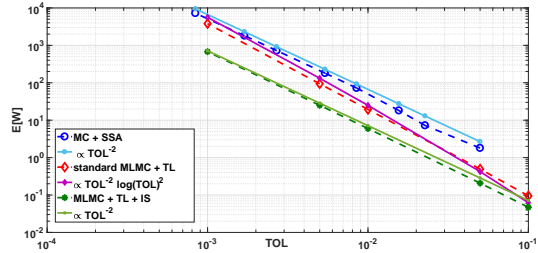}
		\caption{Example \ref{exp:Michaelis–Menten enzyme kinetics}.}
		\label{fig: complexity_comparison example3}
	\end{subfigure}
	\caption{Comparison of the numerical complexity of the different methods i) MC with SSA, ii) standard MLMC with TL, and iii) MLMC with TL in combination with IS  ($\delta=\frac{3}{4}$).  MLMC combined with our IS  algorithm achieves the same numerical complexity, $\Ordo{\text{TOL}^{-2}}$, as MC with an exact method (SSA), but with a significantly smaller  constant. On the other hand, MLMC in combination with IS  algorithm significantly outperforms  standard MLMC.}
	\label{fig: complexity_comparison}
\end{figure}
\FloatBarrier

\subsection{Numerical Results of MLMC Without IS}\label{sec:umerical Results of MLMC Without IS}
In Figures \ref{fig:MLMC_decay_X_0_10}, \ref{fig:MLMC_exp2_X_1} and \ref{fig:MLMC_exp4_X_3} , we show the convergence plots for the MLMC method without  IS for Examples \ref{exp:decay}, \ref{exp:Gene transcription and translation} and \ref{exp:Michaelis–Menten enzyme kinetics}, respectively. In these figures, and specifically from the right plot in the second row, we can see that for deep levels of MLMC, the kurtosis increases dramatically with respect to level $\ell$ of the MLMC method. This poor behavior of the kurtosis is mainly due to the \textit{catastrophic coupling} issue (explained in Section \ref{sec:Catastrophic coupling}), as illustrated by Figures \ref{fig:catastrophic_coupling_illustration_decay_X_1}, \ref{fig:catastrophic_coupling_illustration_example2_X_1} and \ref{fig:catastrophic_coupling_illustration_example4_X_3}. 
%\subsubsection{The Decay Example}\label{sec: decay example}
\FloatBarrier
\begin{figure}[h!]
		\centering
		\includegraphics[width=1\linewidth]{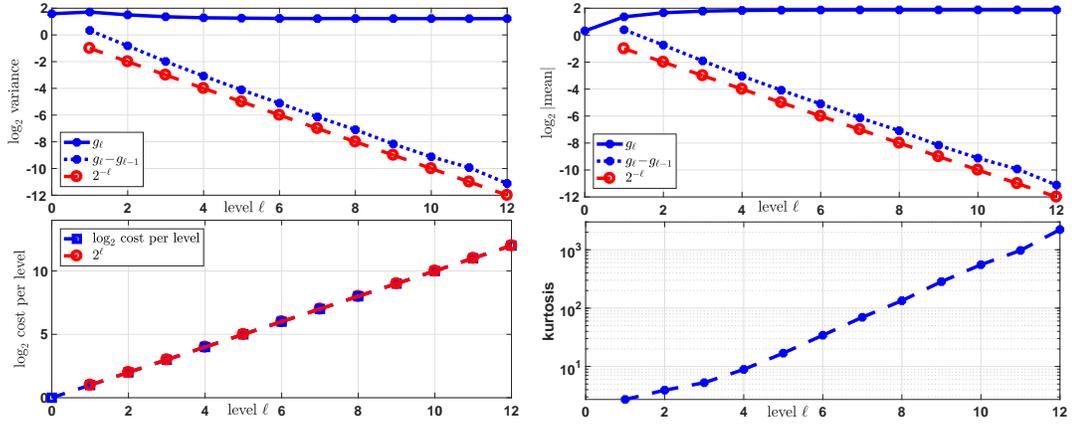}
	\caption{MLMC without IS  for Example \ref{exp:decay}: Convergence plots  with $g_{\ell}=\overline{X}_{\ell}(T)$.}
	\label{fig:MLMC_decay_X_0_10}
\end{figure}
\FloatBarrier
\begin{figure}[h!]
	\centering
	\begin{subfigure}{0.48\textwidth}
		\centering
		\includegraphics[width=1\linewidth]{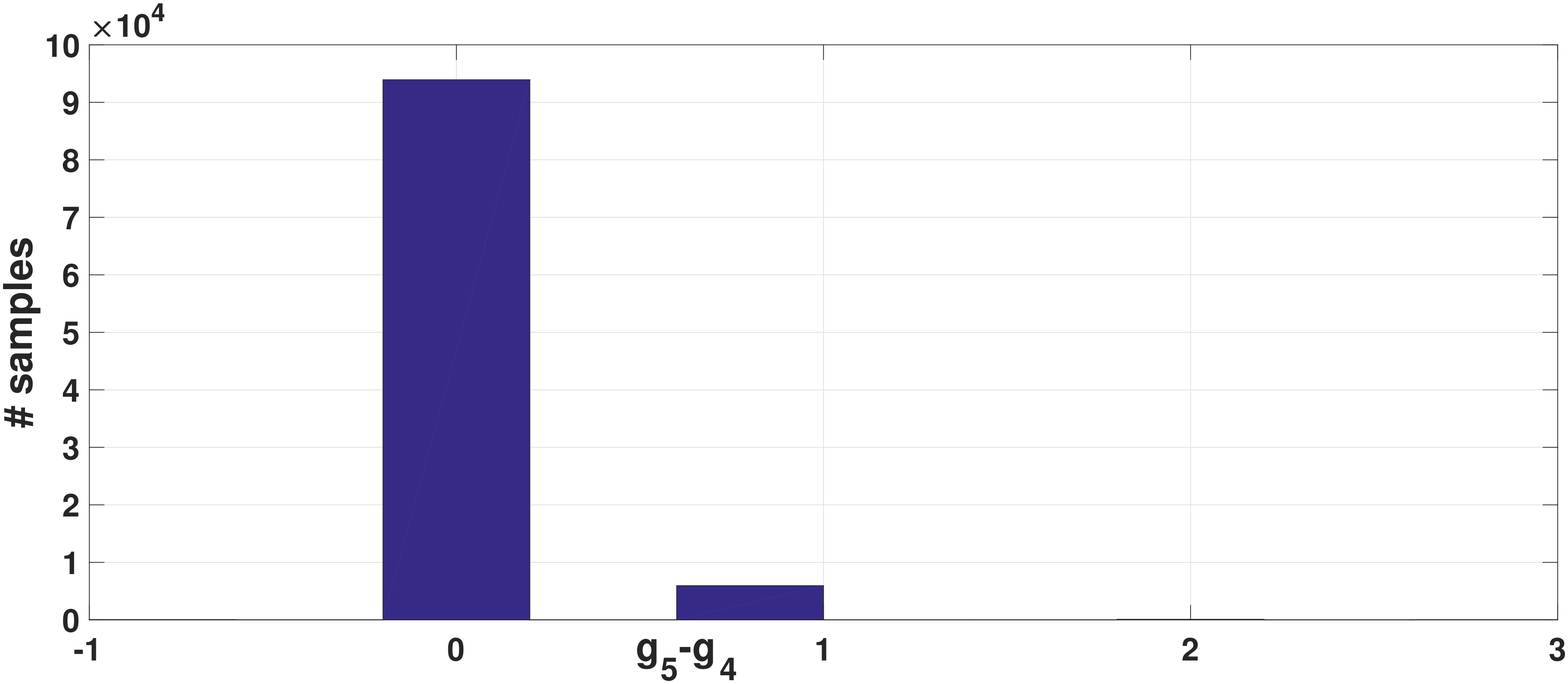}
		\caption{}
		\label{fig:sub3}
	\end{subfigure}
	\begin{subfigure}{0.48\textwidth}
		\centering
		\includegraphics[width=1\linewidth]{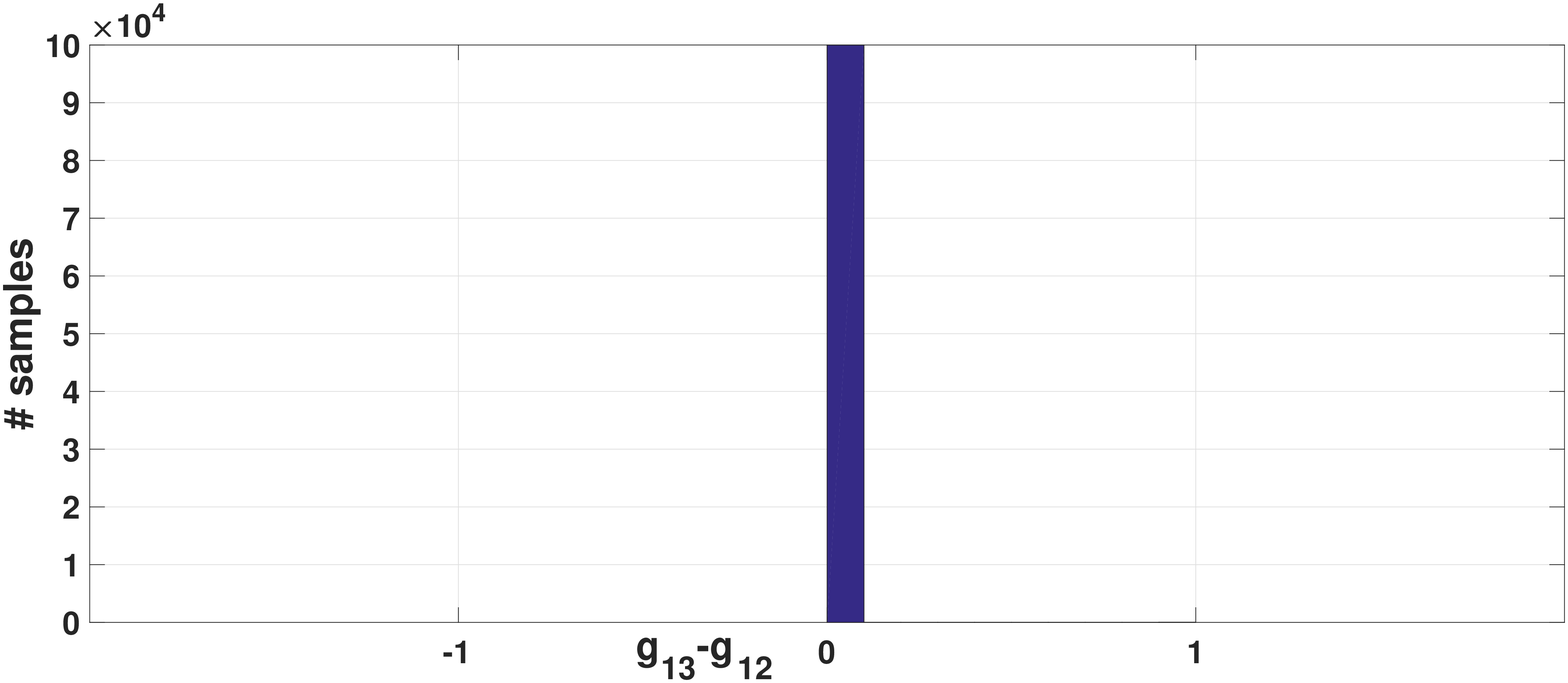}
		\caption{}
		\label{fig:sub4}
	\end{subfigure}
	\caption{Example \ref{exp:decay} without IS: Histogram of  $g_{\ell}-g_{\ell-1}$ ($g_{\ell}=\overline{X}_{\ell}(T)$), for number of samples $M_{\ell}=10^5$. The proportion of samples $\{g_{\ell}-g_{\ell-1}=0\}$ is an increasing function of the level, $\ell$,  of the MLMC estimator, to reach almost $100\%$ for $\ell=13$. a) $\ell=5$. b)  $\ell=13$.}
	\label{fig:catastrophic_coupling_illustration_decay_X_1}
\end{figure}
\FloatBarrier
%\subsubsection{The Gene Transcription and Translation Example}\label{sec:The gene transcription and translation example}
\begin{figure}[h!]
		\centering
		\includegraphics[width=1\linewidth]{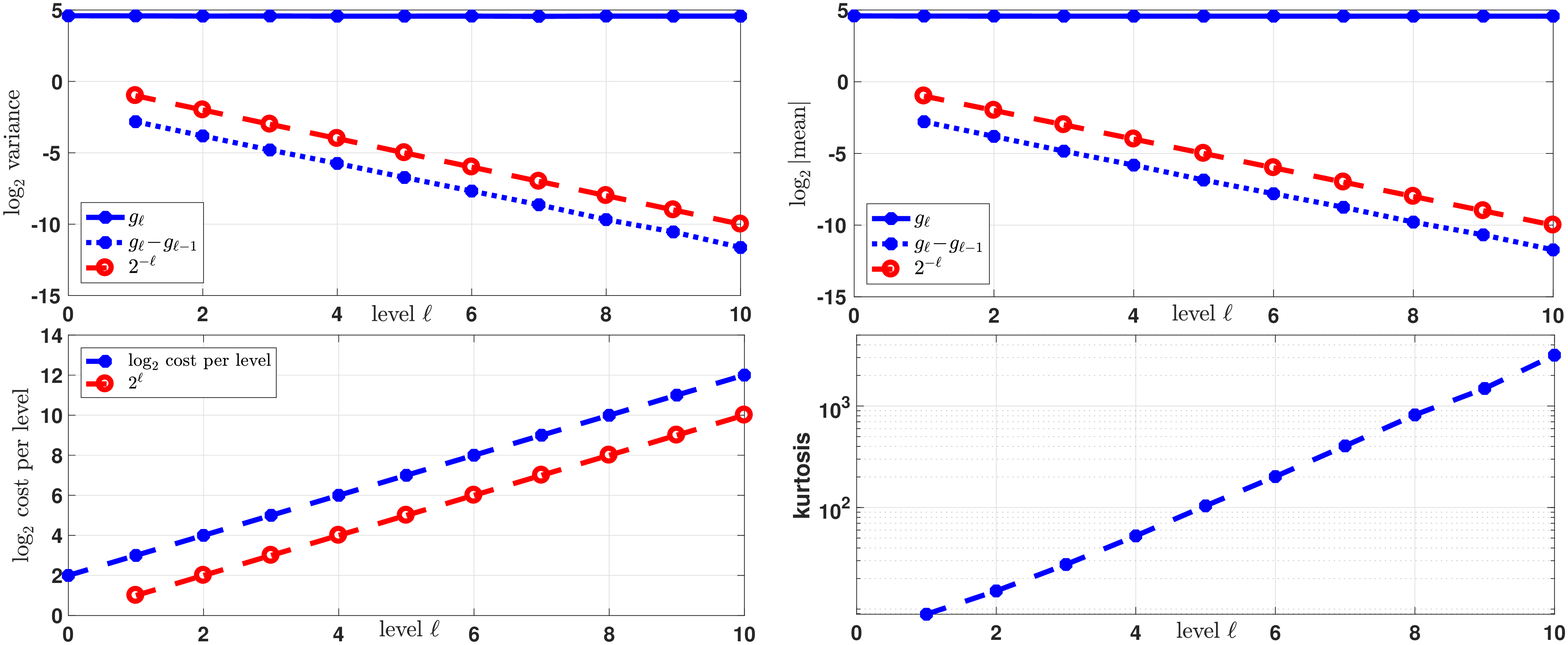}
	\caption{Convergence plots of MLMC without IS for Example \ref{exp:Gene transcription and translation}.}
	\label{fig:MLMC_exp2_X_1}
\end{figure}
\FloatBarrier
\begin{figure}[h!]
	\centering
	\begin{subfigure}{.48\textwidth}
		\centering
		\includegraphics[width=1\linewidth]{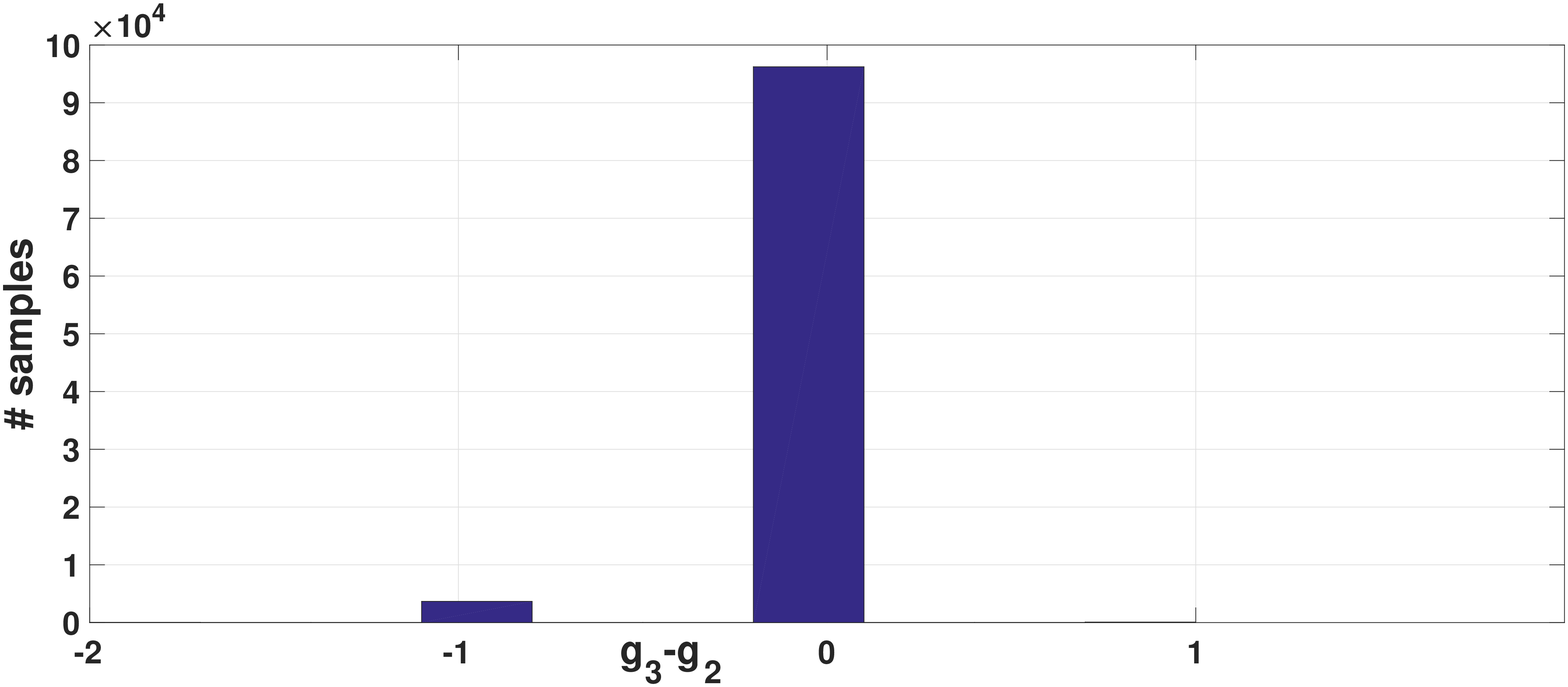}
		\caption{}
		\label{fig:sub3}
	\end{subfigure}%
	\begin{subfigure}{.48\textwidth}
		\centering
		\includegraphics[width=1\linewidth]{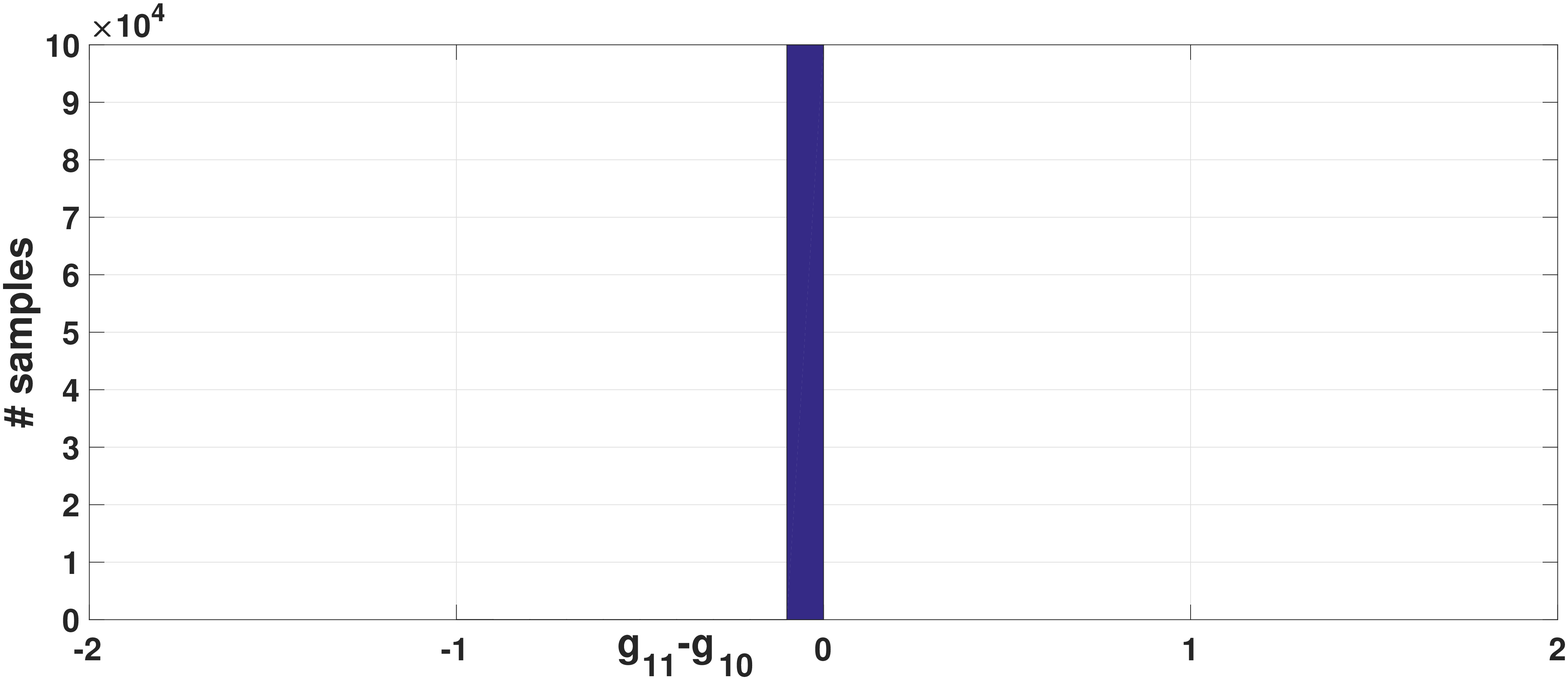}
		\caption{}
		\label{fig:sub4}
	\end{subfigure}
	
	\caption{Example \ref{exp:Gene transcription and translation} without IS: Histogram of  $g_{\ell}-g_{\ell-1}$ ($g_{\ell}=\overline{X}^{(1)}_{\ell}(T)$),  for number of samples $M_{\ell}=10^5$. The proportion of samples $\{g_{\ell}-g_{\ell-1}=0\}$ is an increasing function of the level, $\ell$,  of the MLMC estimator, to reach almost $100\%$ for $\ell=11$.  a) $\ell=3$. b)  $\ell=11$.}
	\label{fig:catastrophic_coupling_illustration_example2_X_1}
\end{figure}
\FloatBarrier
%\subsubsection{The Michaelis-Menten Enzyme Kinetics Example}\label{sec:ichaelis-Menten enzyme kinetics example}
\FloatBarrier
\begin{figure}[h!]
		\centering
		\includegraphics[width=1\linewidth]{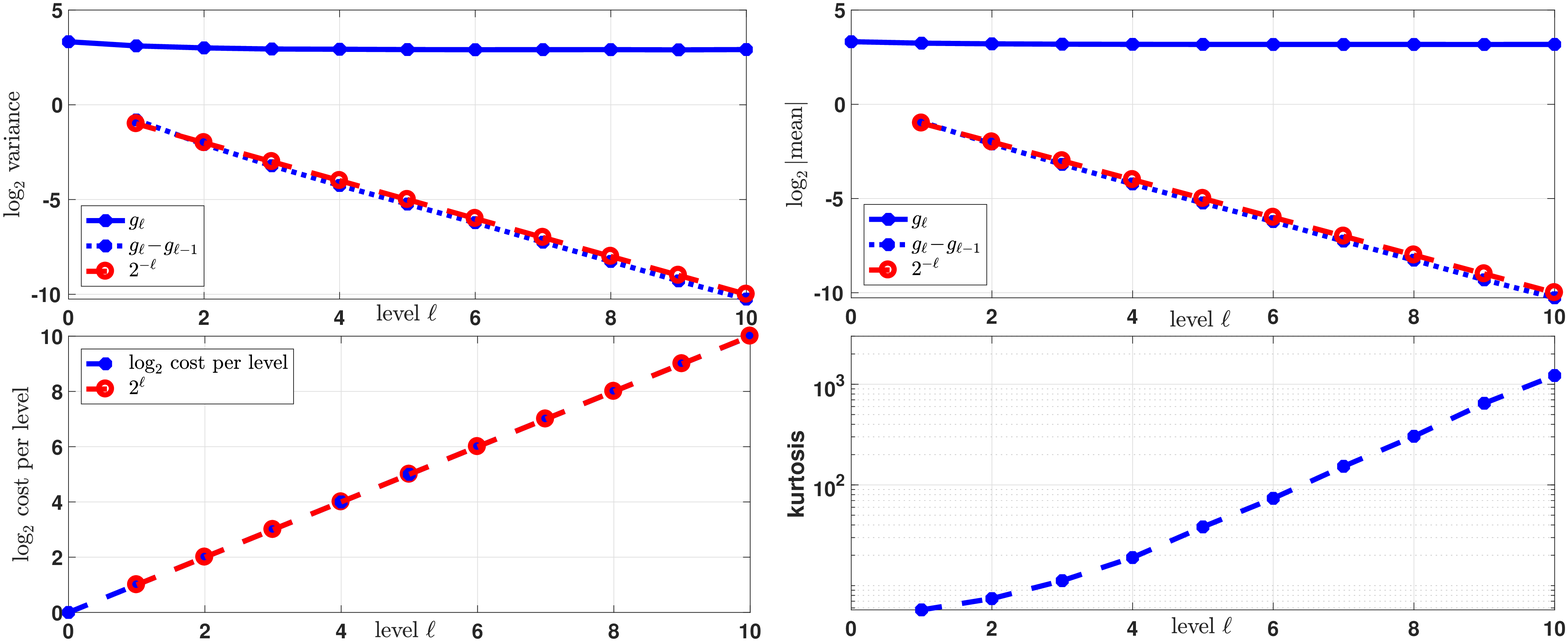}
	\caption{Convergence plots of MLMC without IS for Example \ref{exp:Michaelis–Menten enzyme kinetics}.}
	\label{fig:MLMC_exp4_X_3}
\end{figure}
\FloatBarrier
\begin{figure}[h!]
	\centering
	\begin{subfigure}{.48\textwidth}
		\centering
		\includegraphics[width=1\linewidth]{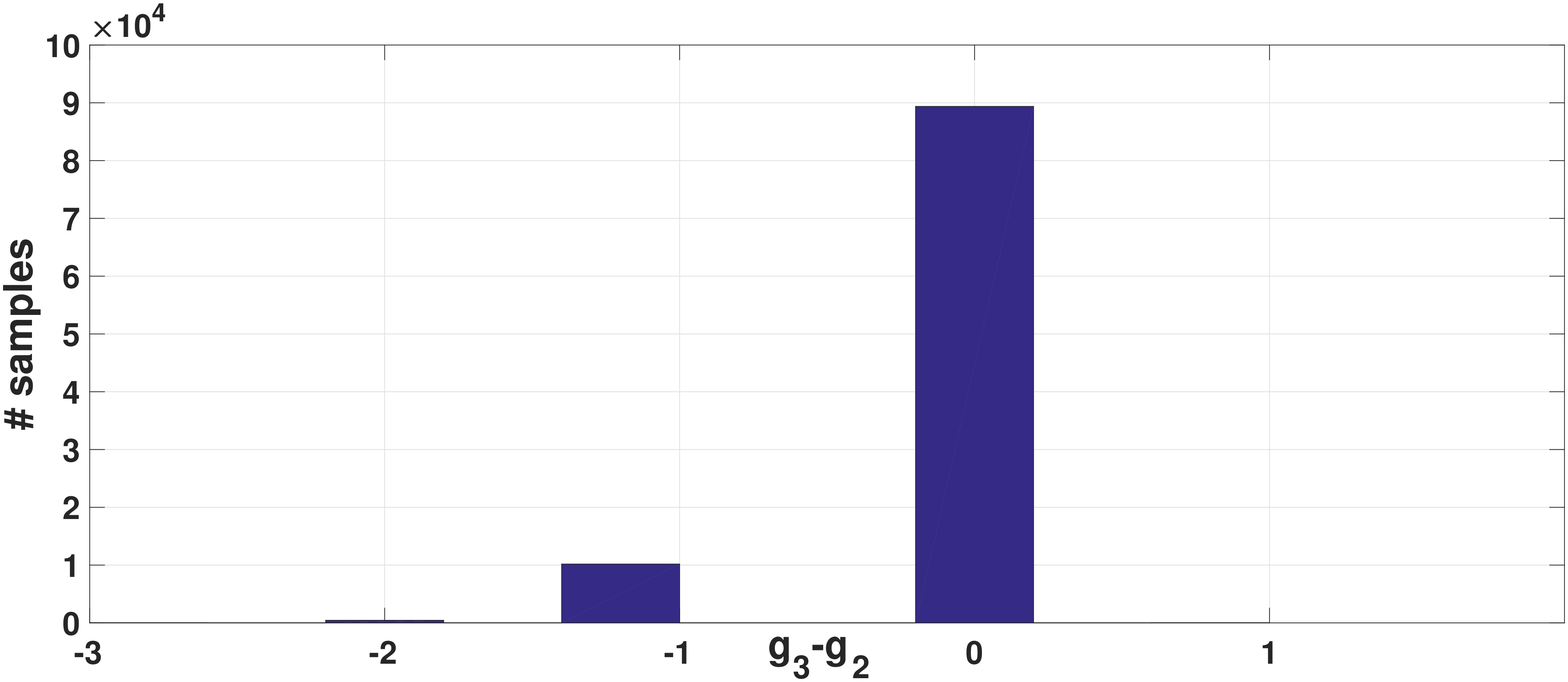}
		\caption{}
		\label{fig:sub3}
	\end{subfigure}\hspace{0.1em}%
	\begin{subfigure}{.48\textwidth}
		\centering
		\includegraphics[width=1\linewidth]{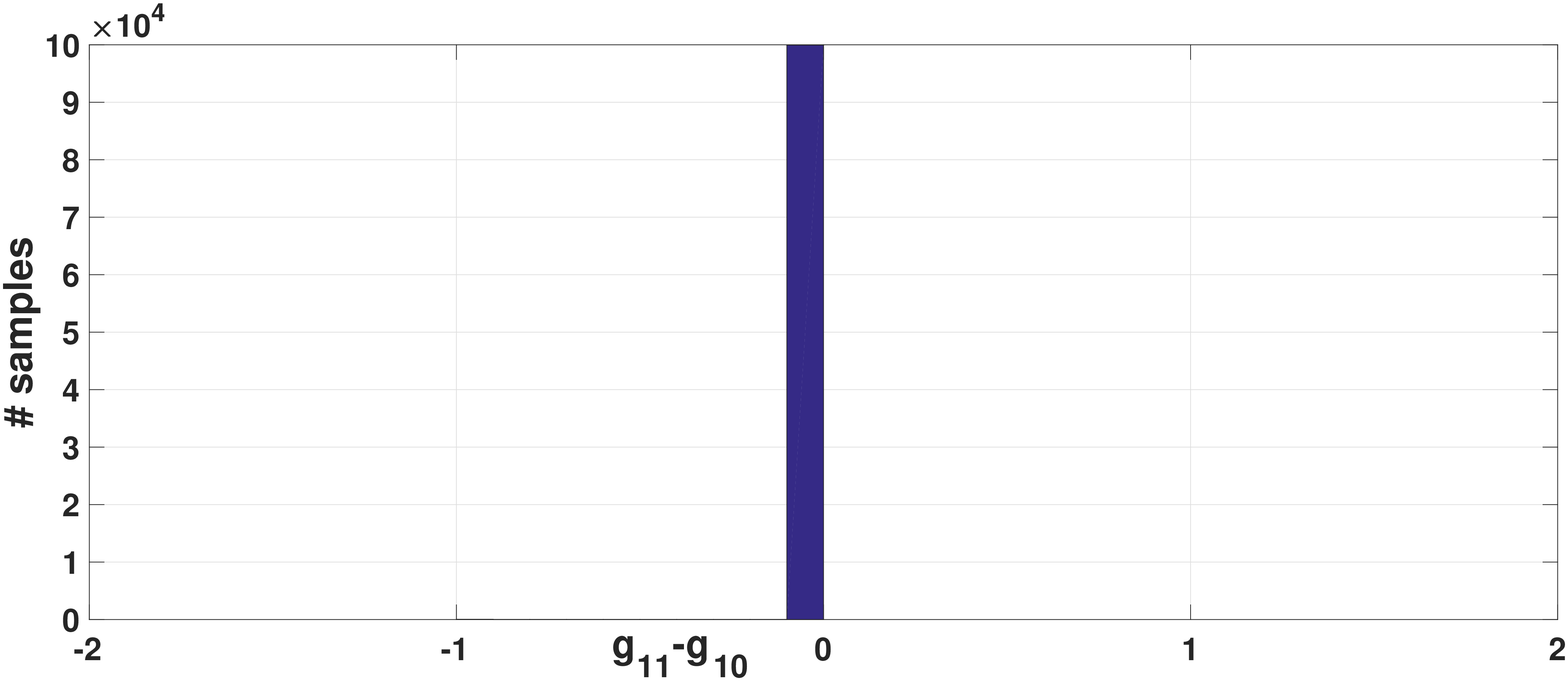}
		\caption{}
		\label{fig:sub4}
	\end{subfigure}
	\caption{Example \ref{exp:Michaelis–Menten enzyme kinetics} without IS: Histogram of  $g_{\ell}-g_{\ell-1}$ ($g_{\ell}=\overline{X}^{(3)}_{\ell}(T)$), for number of samples $M_{\ell}=10^5$. The proportion of samples $\{g_{\ell}-g_{\ell-1}=0\}$ is an increasing function of the level, $\ell$,  of the MLMC estimator, to reach almost $100\%$ for $\ell=11$. a) $\ell=3$. b)  $\ell=11$.}
	\label{fig:catastrophic_coupling_illustration_example4_X_3}
\end{figure}
\FloatBarrier
\subsection{Numerical Results of MLMC With IS}\label{sec: Numerical Results of MLMC With IS}
%\subsubsection{The Decay Example}\label{sec: decay example}
The MLMC estimator in combination with IS reduces the kurtosis significantly and improves the strong convergence rate from $1$ to $1+\delta$, as illustrated by Figures  \ref{fig: MLMC estimator_imp_sampling choice_1delta_05_exp1}, and  \ref{fig: MLMC estimator_imp_sampling choice_1delta_075_exp1} for Example \ref{exp:decay}, Figures  \ref{fig: MLMC estimator_imp_sampling choice_1delta_05_exp2}, \ref{fig: MLMC estimator_imp_sampling choice_1delta_075_exp2} for Example \ref{exp:Gene transcription and translation}, and Figures  \ref{fig: MLMC estimator_imp_sampling choice_1delta_05_exp3}, and  \ref{fig: MLMC estimator_imp_sampling choice_1delta_075_exp3} for Example \ref{exp:Michaelis–Menten enzyme kinetics}.  The notable reduction of the kurtosis is mainly due to the small reduction of  the proportion of identical terminal values, $g_{\ell}$ and $g_{\ell-1}$, after using IS, as can be seen in Figures  \ref{fig:catastrophic_coupling_illustration_decay_X_1_imp_sampling}, \ref{fig:catastrophic_coupling_illustration_exampl2_X_1_imp_sampling} and \ref{fig:catastrophic_coupling_illustration_exampl3_X_3_imp_sampling}.
\FloatBarrier
\begin{figure}[h!]
	\centering
	\begin{subfigure}{.48\textwidth}
		\centering
		\includegraphics[width=1\linewidth]{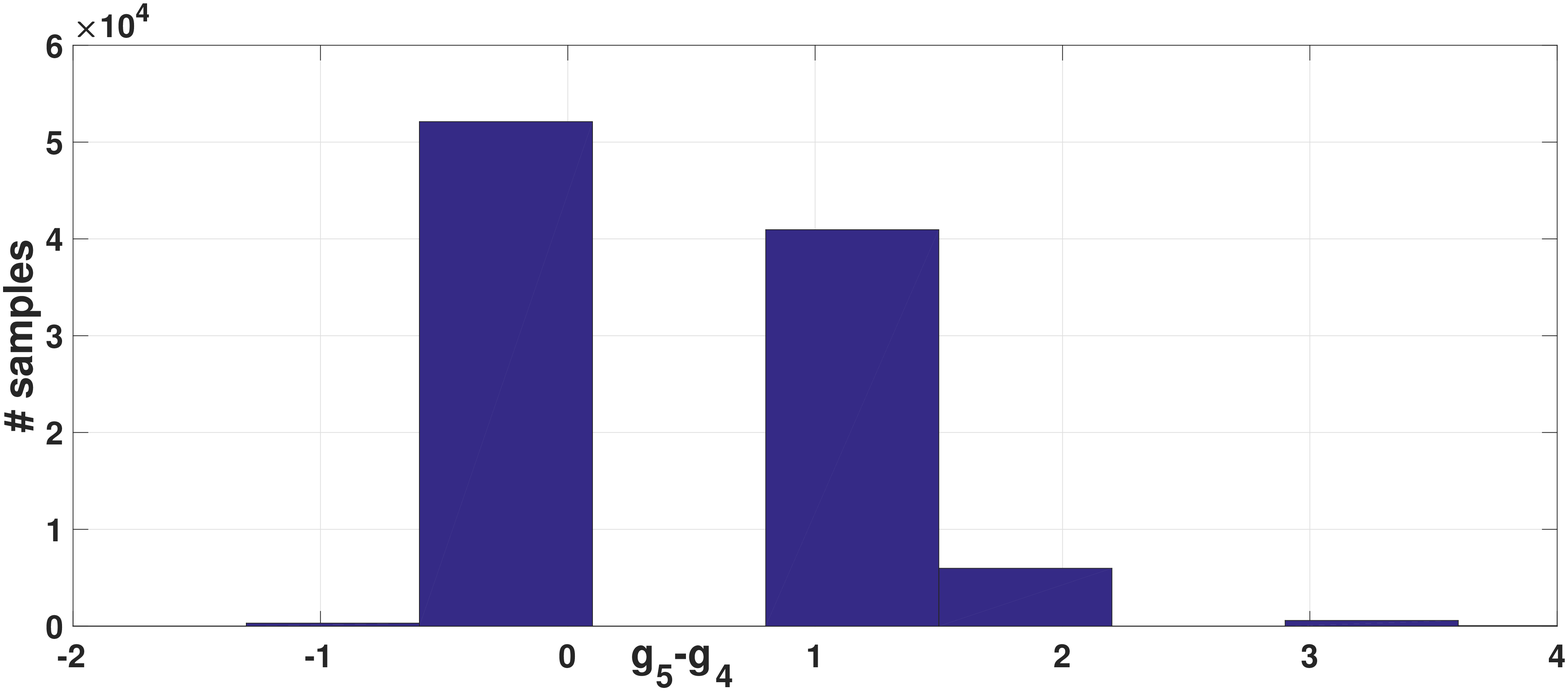}
		\caption{}
		\label{fig:sub3}
	\end{subfigure}%
	\begin{subfigure}{.48\textwidth}
		\centering
		\includegraphics[width=1\linewidth]{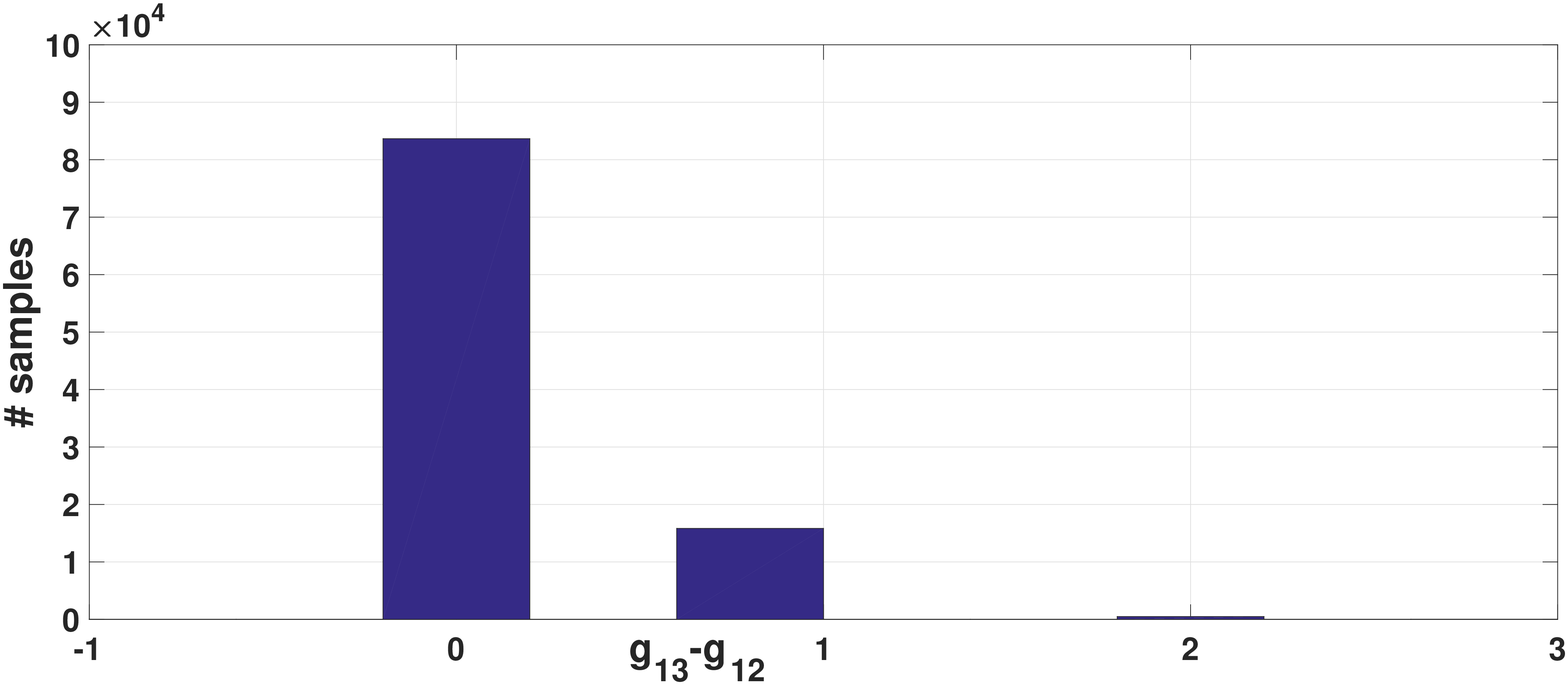}
		\caption{}
		\label{fig:sub4}
	\end{subfigure}
	\caption{Example \ref{exp:decay} with IS  ($\delta=\frac{3}{4}$): Histogram of  $g_{\ell}-g_{\ell-1}$ ($g_{\ell}=\overline{X}_{\ell}(T)$),  for number of samples $M_{\ell}=10^5$. Our IS  reduces the the proportion of samples $\{g_{\ell}-g_{\ell-1}=0\}$ (compared to the case without  IS;  see Figure \ref{fig:catastrophic_coupling_illustration_decay_X_1}) to reach around  $80\%$ for $\ell=13$. a) $\ell=5$. b)  $\ell=13$. }
	\label{fig:catastrophic_coupling_illustration_decay_X_1_imp_sampling}
\end{figure}
%\FloatBarrier
%	\begin{figure}[h!]
%\centering
%\includegraphics[width=1\linewidth]{./figures/imp_sampling_choice_1_results/decay/X0_10/decay_1_N_200000_L12_L0_1step_X0_10_delta_025}
%\caption{MLMC  with IS  ($\delta=1/4$) for  Example \ref{exp:decay}:  Convergence plots with $g_{\ell}=\overline{X}_{\ell}(T)$.}
%\label{fig: MLMC estimator_imp_sampling choice_1delta_025_exp1}
%\end{figure}
%\FloatBarrier
	\begin{figure}[h!]
\centering
\includegraphics[width=1\linewidth]{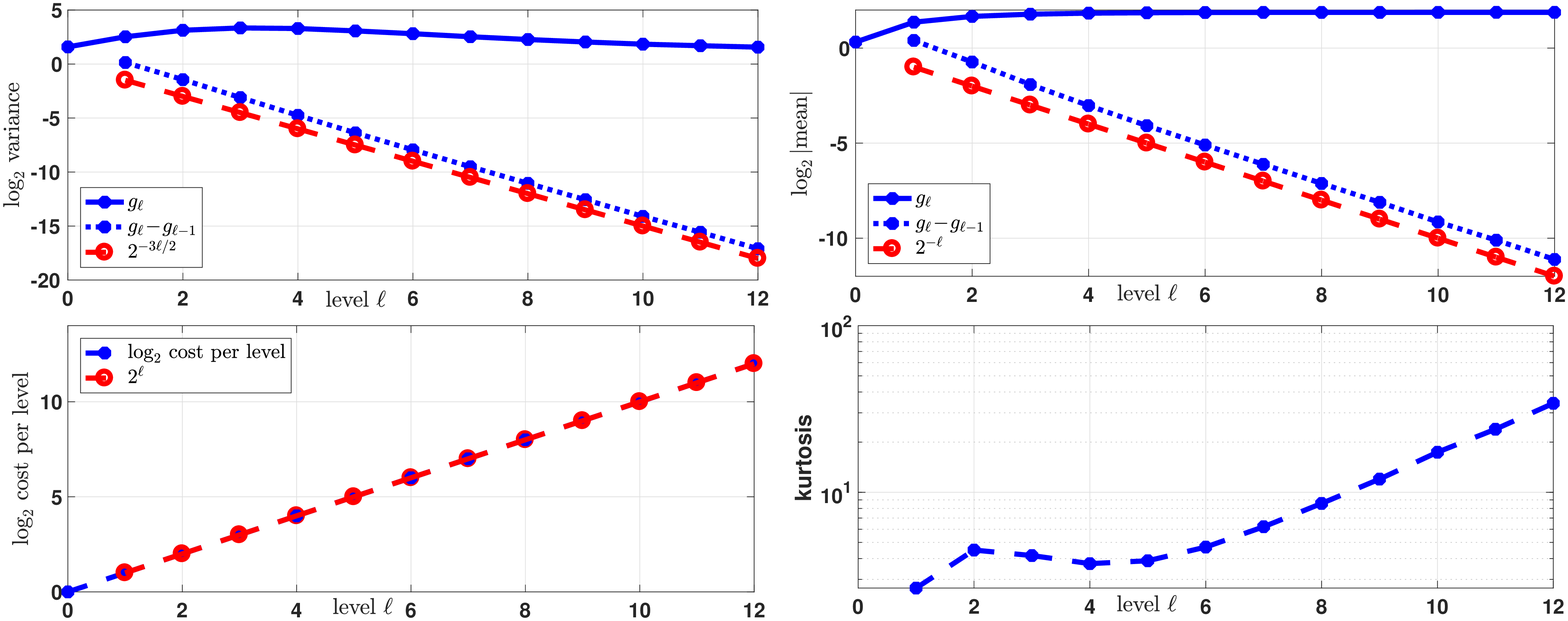}
\caption{Convergence plots of MLMC  with  IS ($\delta=1/2$) for  Example \ref{exp:decay}.}
\label{fig: MLMC estimator_imp_sampling choice_1delta_05_exp1}
\end{figure}

\FloatBarrier

	\begin{figure}[h!]
\centering
\includegraphics[width=1\linewidth]{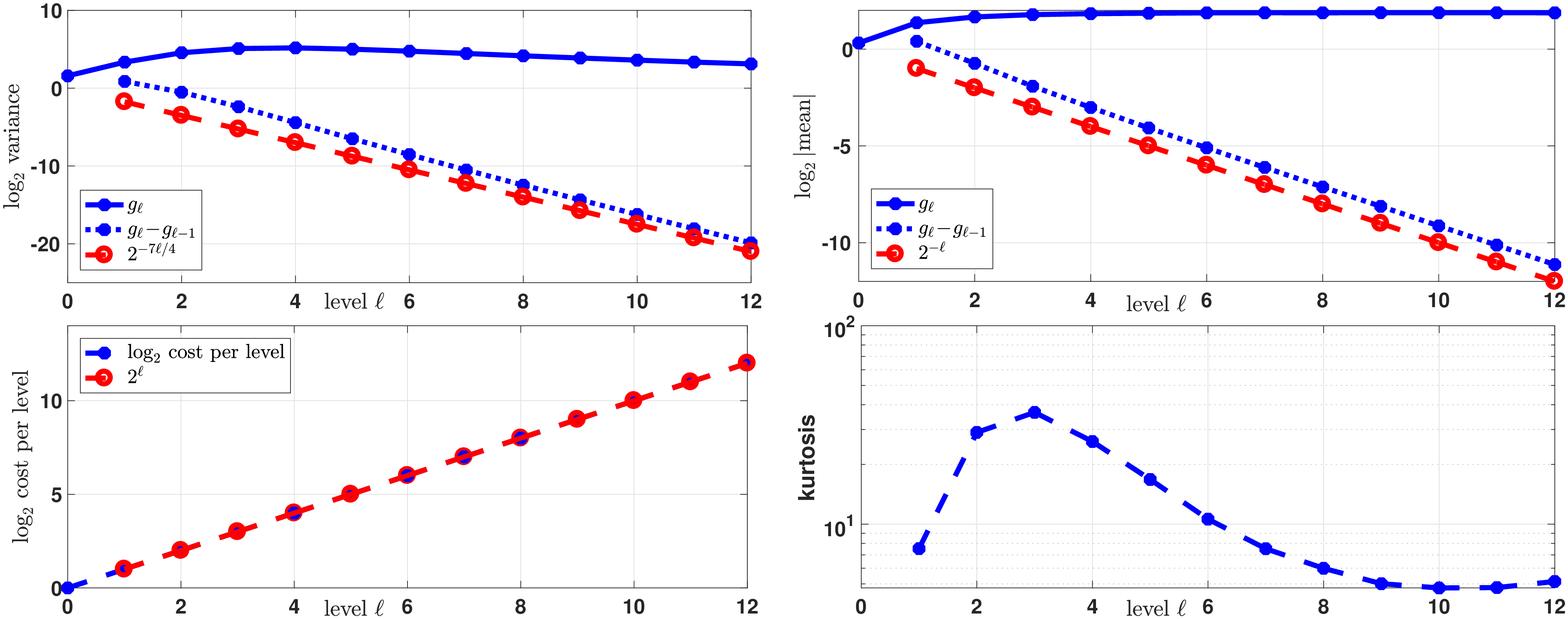}
\caption{Convergence plots of MLMC  with IS  ($\delta=3/4$) for  Example \ref{exp:decay}.}
\label{fig: MLMC estimator_imp_sampling choice_1delta_075_exp1}
\end{figure}
\FloatBarrier
%\subsubsection{The Gene Transcription and Translation Example}\label{sec:The gene transcription and translation example}
\begin{figure}[h!]
	\centering
	\begin{subfigure}{.48\textwidth}
		\centering
		\includegraphics[width=1\linewidth]{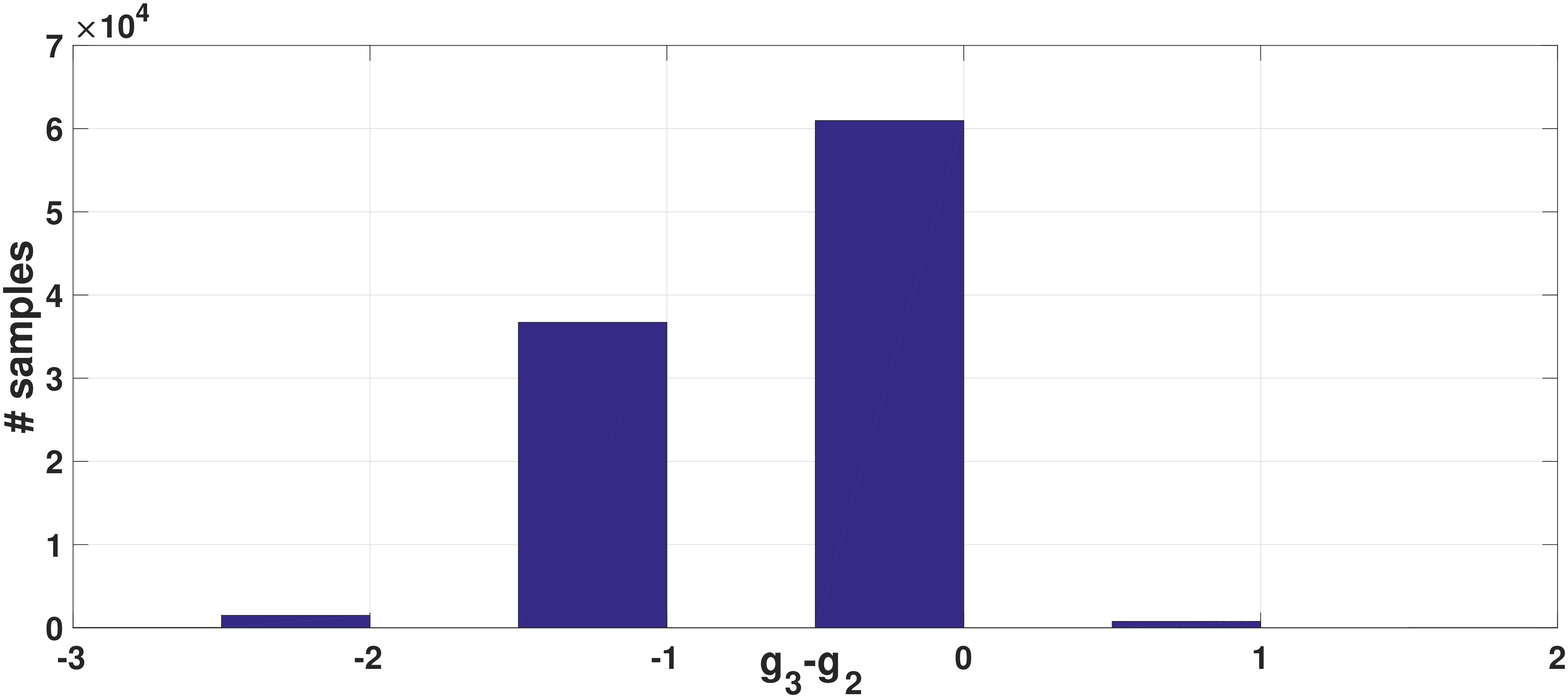}
		\caption{}
		\label{fig:sub3}
	\end{subfigure}%
	\begin{subfigure}{.48\textwidth}
		\centering
		\includegraphics[width=1\linewidth]{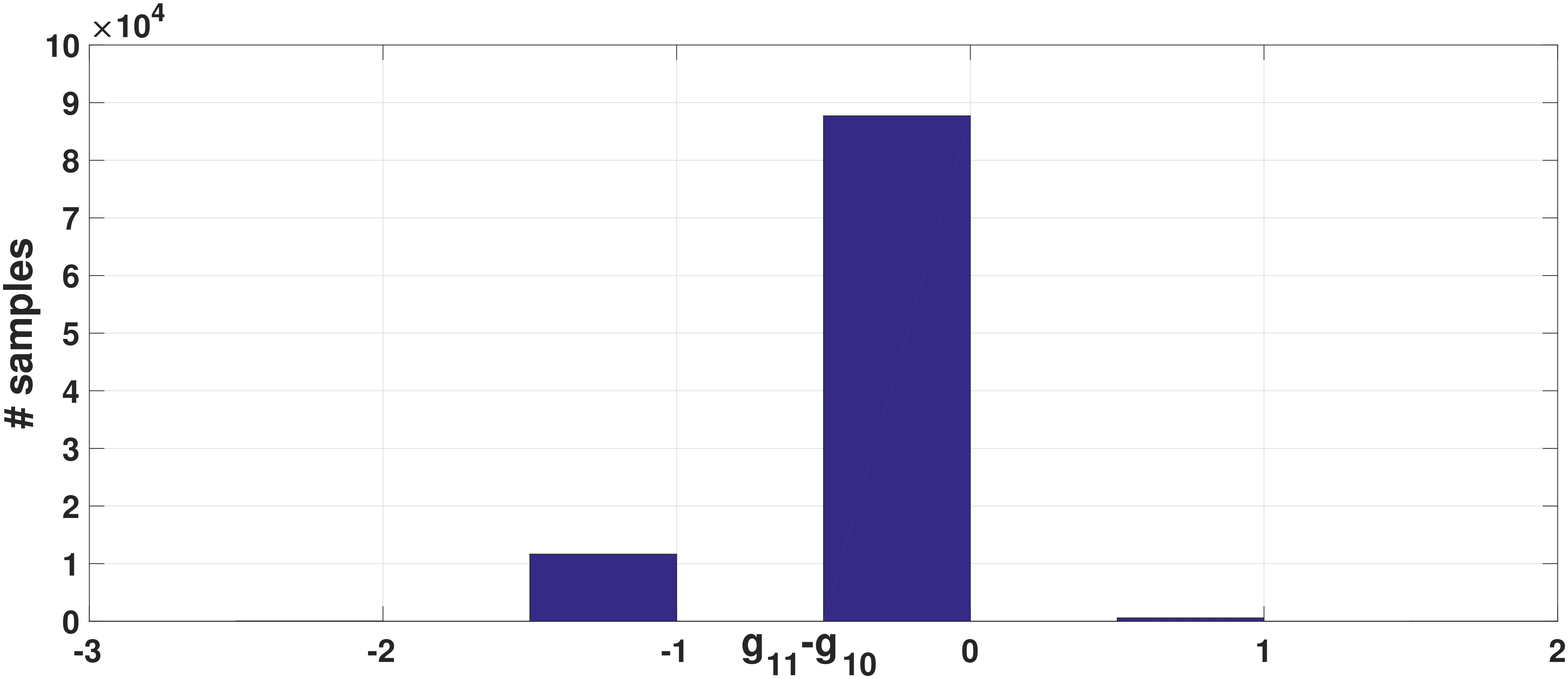}
		\caption{}
		\label{fig:sub4}
	\end{subfigure}
	\caption{Example  \ref{exp:Gene transcription and translation} with IS ($\delta=\frac{3}{4}$): Histogram of  $g_{\ell}-g_{\ell-1}$ ($g_{\ell}=\overline{X}^{(1)}_{\ell}(T)$),for number of samples $M_{\ell}=10^5$. Our IS  reduces the the proportion of samples $\{g_{\ell}-g_{\ell-1}=0\}$ (compared to the case without  IS;  see Figure \ref{fig:catastrophic_coupling_illustration_example2_X_1}) to reach around  $90\%$ for $\ell=11$.  a) $\ell=3$. b)  $\ell=11$.}
	\label{fig:catastrophic_coupling_illustration_exampl2_X_1_imp_sampling}
\end{figure}
\FloatBarrier
	\begin{figure}[h!]
\centering
\includegraphics[width=1\linewidth]{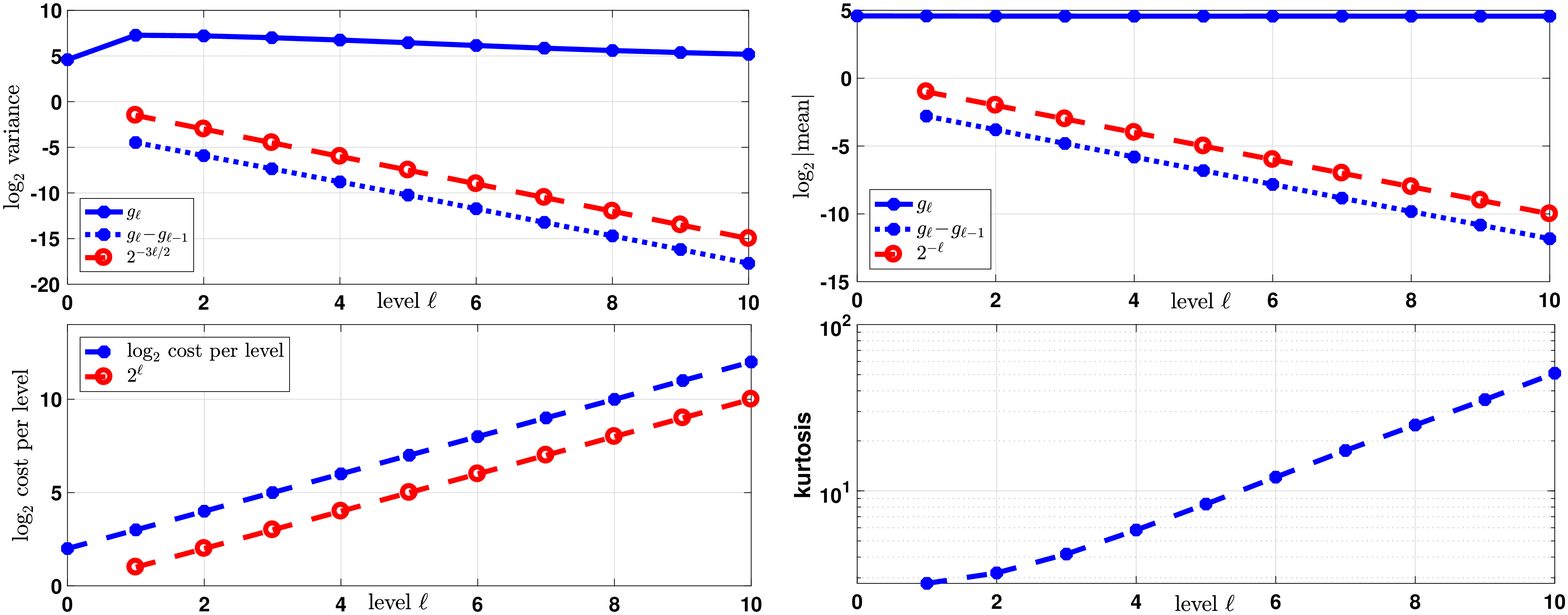}
\caption{Convergence plots of MLMC with IS ($\delta=1/2$) for Example \ref{exp:Gene transcription and translation}.}
\label{fig: MLMC estimator_imp_sampling choice_1delta_05_exp2}
\end{figure}

\FloatBarrier

\begin{figure}[h!]
\centering
\includegraphics[width=1\linewidth]{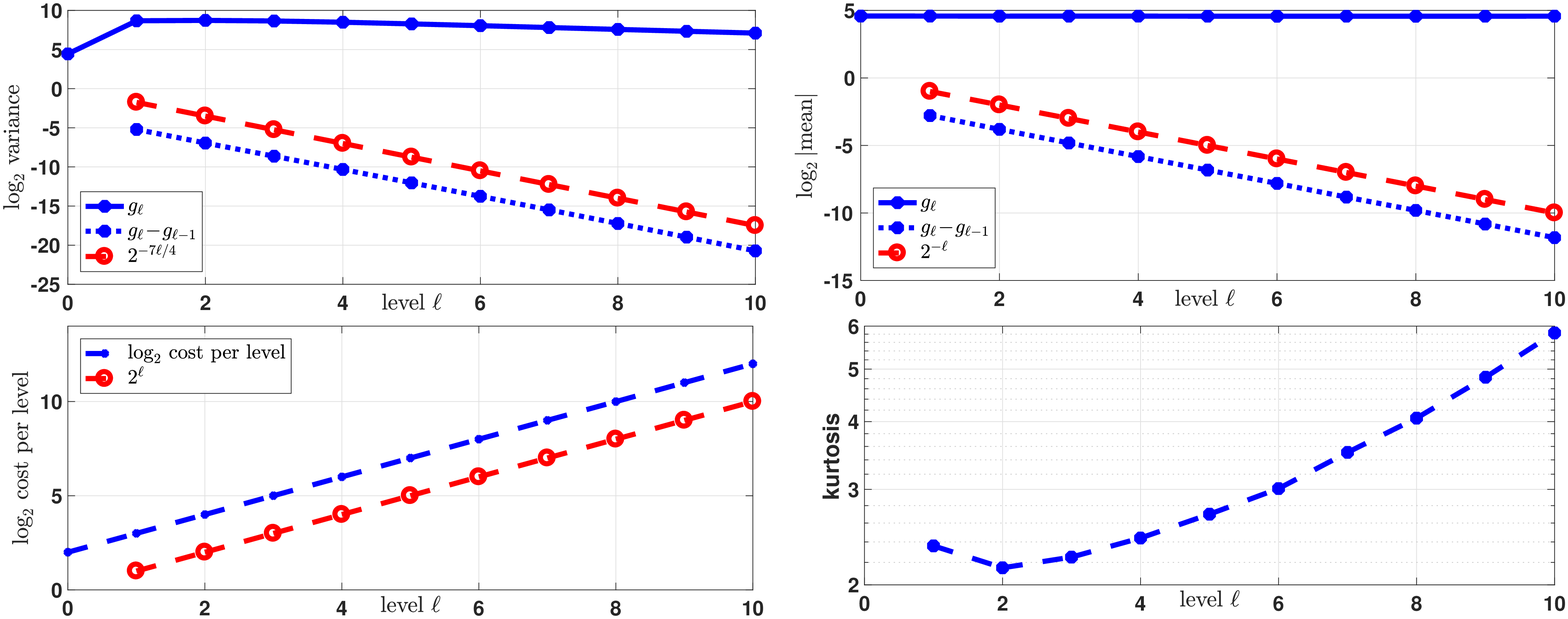}
\caption{Convergence plots  of MLMC with IS ($\delta=3/4$) for Example \ref{exp:Gene transcription and translation}.}
\label{fig: MLMC estimator_imp_sampling choice_1delta_075_exp2}
\end{figure}
\FloatBarrier
%\subsubsection{The Michaelis-Menten Enzyme Kinetics Example}\label{sec:ichaelis-Menten enzyme kinetics example}
\begin{figure}[h!]
	\centering
	\begin{subfigure}{.48\textwidth}
		\centering
		\includegraphics[width=1\linewidth]{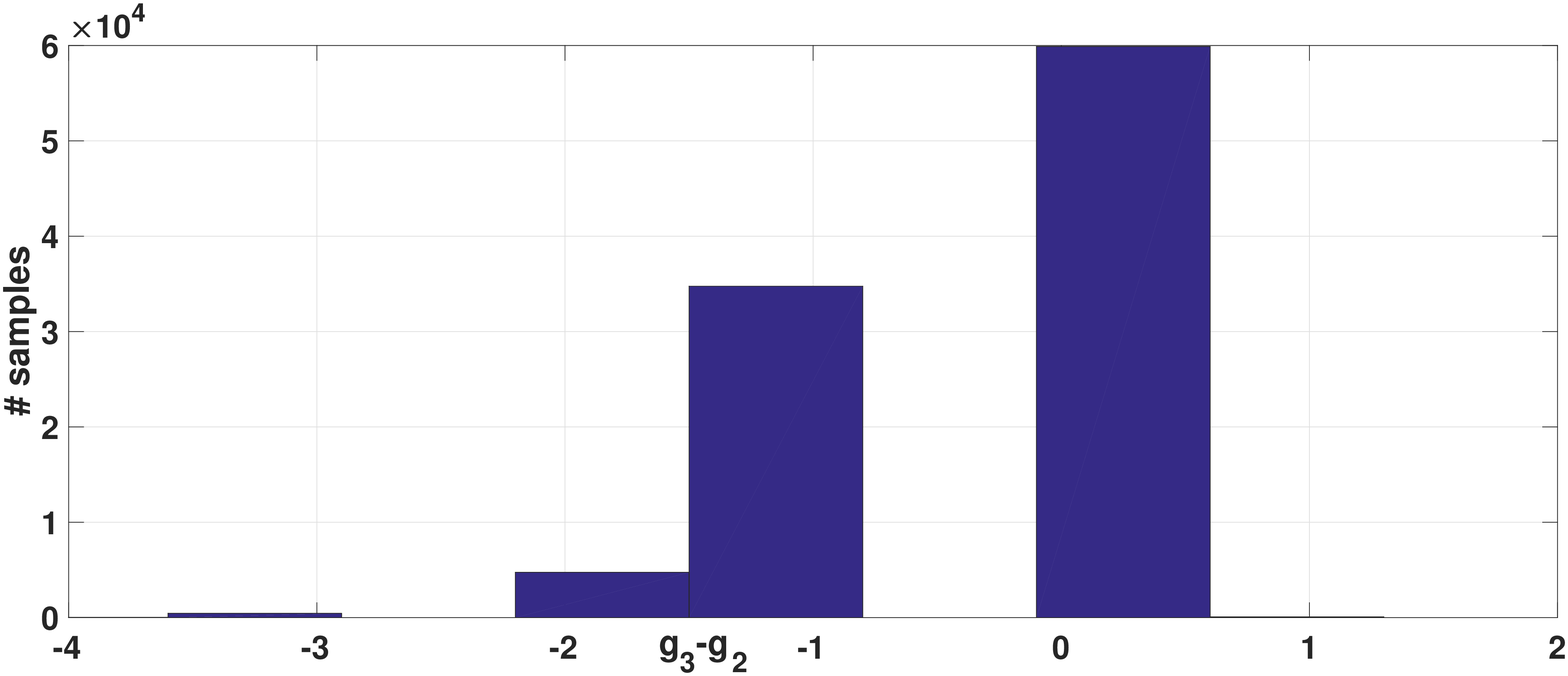}
		\caption{}
		\label{fig:sub3}
	\end{subfigure}%
	\begin{subfigure}{0.48\textwidth}
		\centering
		\includegraphics[width=1\linewidth]{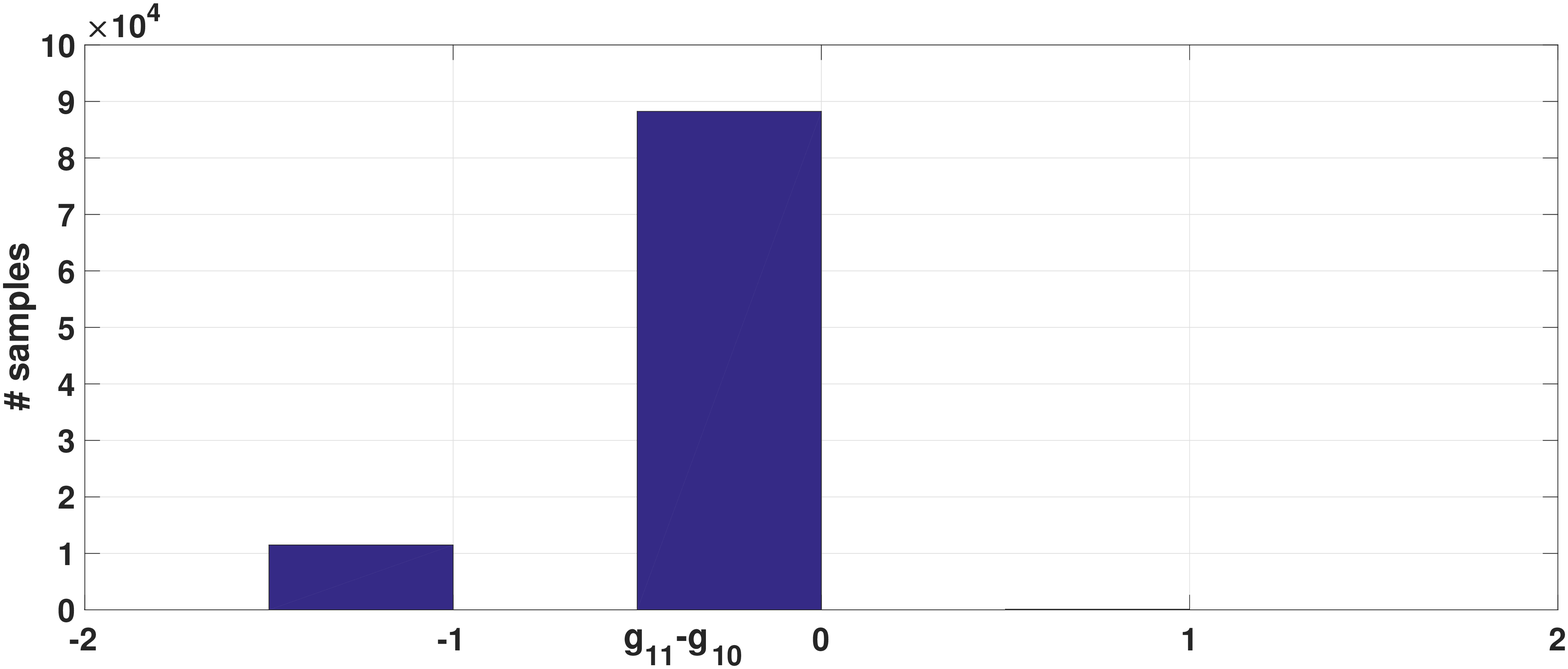}
		\caption{}
		\label{fig:sub4}
	\end{subfigure}
	\caption{Example  \ref{exp:Michaelis–Menten enzyme kinetics} with  IS ($\delta=\frac{3}{4}$): Histogram of  $g_{\ell}-g_{\ell-1}$ ($g_{\ell}=\overline{X}^{(3)}_{\ell}(T)$),  for number of samples $M_{\ell}=10^5$. Our IS reduces the the proportion of samples $\{g_{\ell}-g_{\ell-1}=0\}$ (compared to the case without  IS;  see Figure \ref{fig:catastrophic_coupling_illustration_example4_X_3}) to reach around  $90\%$ for $\ell=11$. a) $\ell=3$. b)  $\ell=11$.}	\label{fig:catastrophic_coupling_illustration_exampl3_X_3_imp_sampling}
\end{figure}
\FloatBarrier
	\begin{figure}[h!]
\centering
\includegraphics[width=1\linewidth]{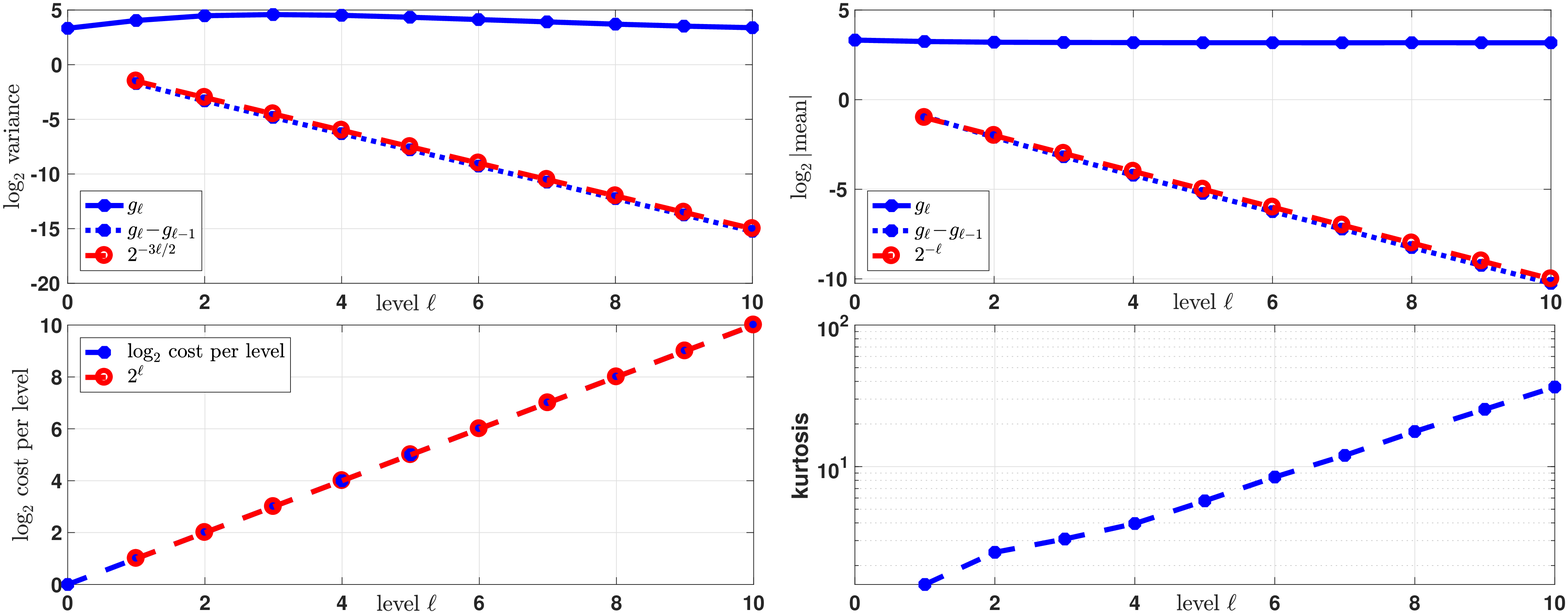}
\caption{Convergence plots  of MLMC with IS ($\delta=1/2$) for Example \ref{exp:Michaelis–Menten enzyme kinetics}.}
\label{fig: MLMC estimator_imp_sampling choice_1delta_05_exp3}
\end{figure}
\FloatBarrier
	\begin{figure}[h!]
\centering
\includegraphics[width=1\linewidth]{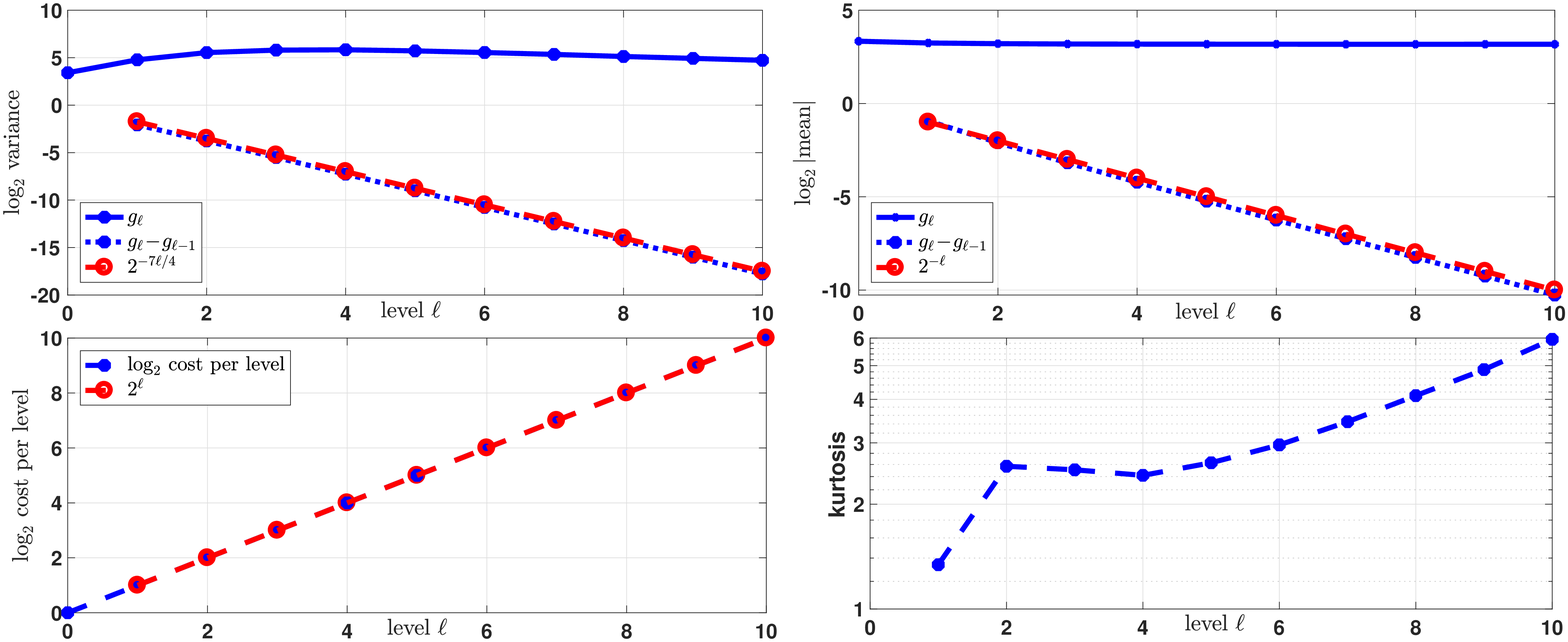}
\caption{Convergence plots  of MLMC with IS ($\delta=3/4$) for Example \ref{exp:Michaelis–Menten enzyme kinetics}.}
\label{fig: MLMC estimator_imp_sampling choice_1delta_075_exp3}
\end{figure}
\FloatBarrier

\section{Conclusions and Future Work}\label{sec:Conclusions and future work}
In the work presented here, we address  the   high-kurtosis phenomenon related to catastrophic coupling, and observed in MLMC estimators when applied  in the context of SRNs and pure jumps. We propose a novel path-dependent IS algorithm to be used  with MLMC, in order to improve  robustness and computational performance.

Our theoretical results and  numerical experiments show that our  proposed method  not only improves the robustness of the multilevel estimator by dramatically reducing  the kurtosis, but also  improves the strong convergence rate, which results in an improvement of the complexity of the MLMC method, from  $\Ordo{\text{TOL}^{-2} \log(\text{TOL})^2}$  to $\Ordo{\text{TOL}^{-2}}$, with  $\text{TOL}$ being a pre-selected tolerance. We achieve all these improvements with a  negligible additional cost since our IS algorithm is only applied a few times across each simulated path.

Here, we limit ourselves to the use of the IS technique with an explicit TL scheme. In  a future study, we intend to investigate the potential of our proposed algorithm when using a split-step implicit TL scheme, as proposed in \cite{hammouda2017multilevel}, which is required for systems with the presence of slow and fast timescales (stiff systems). To overcome the  \textit{catastrophic coupling} issue, the authors in \cite{hammouda2017multilevel} used extrapolation      to estimate the sample variance when using MLMC. We believe that our new IS  technique may help to obtain accurate estimates of the  sample variances needed  by the MLMC estimator. Another potential  research direction may be to  investigate a more optimal IS scheme to be used for MLMC; for instance, we may try to use a hierarchy of  $\delta_{\ell}$, where the parameter $\delta$ used in our proposed method would depend on the level of discretization. Furthermore, we may explore the possibility of introducing a new IS scheme for MLMC based on SPM coupling, to address the \textit{catastrophic decoupling} issue, which is  the second cause of the  high-kurtosis phenomenon in the context of SRNs when using MLMC. Finally, we can combine the strengths of our method and the hybrid approach  in \cite{moraes2016multilevel} to improve the performance of the MLMC estimator.

\textbf{Acknowledgments} This work was supported by the KAUST Office of Sponsored Research (OSR) under Award No. URF/1/2584-01-01 and the Alexander von Humboldt Foundation. C. Ben Hammouda and R. Tempone are members of the KAUST SRI Center for Uncertainty Quantification in Computational Science and Engineering.  The authors would like to thank Dr. Alvaro Moraes and Sophia  Franziska Wiechert for their helpful and constructive comments. The authors are also  very grateful to the anonymous referees for their valuable comments and suggestions  that greatly contributed to shape the final version of the work.

%%%%%%%%%%%%%%%%%%%%%%%%%%%%%%%%%%%%%%%%%%

%References
%%%%%%%%%%%%%%%%%%%%%%%%%%%%%%%%%%%%%%%%%%
\bibliographystyle{plain}
\bibliography{imp_samp_SRNS} 
\appendix
\section{Proofs of Lemma \ref{lemma: moments} and Theorems \ref{thm:kurtosis_high_dim}  and \ref{thm:variance_high_dim}}\label{appendix:Proofs of Lemma}
\begin{proof}[Proof of Lemma \ref{lemma: moments}]\label{proof_lemma_one_dim}
We denote by $K=\sum_{n \in S} k_n$, $L_{\ell}\left(j\right)$ the  likelihood evaluated at $K=j$. Then, for   $p\ge 1$ and $0\le\delta<1$,  and  using relation  \eqref{eq: lik_level_ell}, we  write
\begin{small}
\begin{align}\label{eq:kurt_exapanded1}
\exptpibar{\abs{\Delta g_{\ell}}^p(T) L_{\ell}^p ; \left(\mathcal{F}_{N_{\ell}-1}, \mathcal{I}^s_{\ell}=\mathcal{S}\right)}
&= \sum_{\underset{ i \in \nset \setminus \{0\}}{\abs{\Delta g_{\ell}(T)}=K =i}} i ^p L_{\ell}^p \left(i\right)  \bar{\pi}_{\ell} \left( \abs{\Delta g_{\ell}(T)}=i, K= i ; \left(\mathcal{F}_{N_{\ell}-1}, \mathcal{I}^s_{\ell}=\mathcal{S}\right)\right)\nonumber \\
&+ \sum_{\underset{i \neq j, \: (i,j)  \in \nset^2}{\abs{\Delta g_{\ell}(T)}=i,\: K =j}} i ^p L_{\ell}^p \left(j\right)  \bar{\pi}_{\ell} \left( \abs{\Delta g_{\ell}(T)}=i, K= j ; \left(\mathcal{F}_{N_{\ell}-1}, \mathcal{I}^s_{\ell}=\mathcal{S}\right)\right)\nonumber \\
&= \sum_{\underset{ i \in \nset  \setminus \{0\}}{\abs{\Delta g_{\ell}}=K =i}} \underset{A_{i}}{\underbrace{ i ^p e^{p(\Delta t_{\ell}^{1-\delta} -\Delta t_{\ell})  \sum_{n \in \mathcal{S}} \Delta a_{\ell,n}} \Delta t_{\ell}^{i p \delta}  \bar{\pi}_{\ell} \left( \abs{\Delta g_{\ell}(T)}=i, K= i ; \left(\mathcal{F}_{N_{\ell}-1}, \mathcal{I}^s_{\ell}=\mathcal{S}\right)\right)}}\nonumber \\
&+ \sum_{\underset{\underset{i \neq j, \: (i,j)  \in \nset^2}{K =j}}{\abs{\Delta g_{\ell}(T)}=i}} \underset{B_{ij}}{\underbrace{i^p e^{p(\Delta t_{\ell}^{1-\delta} -\Delta t_{\ell})  \sum_{n \in \mathcal{S}} \Delta a_{\ell,n}} \Delta t_{\ell}^{j p \delta}    \bar{\pi}_{\ell} \left( \abs{\Delta g_{\ell}(T)}=i, K= j ; \left(\mathcal{F}_{N_{\ell}-1}, \mathcal{I}^s_{\ell}=\mathcal{S}\right)\right)}}\nonumber \\
& =\sum_{\underset{ i \in \nset  \setminus \{0\}}{\abs{\Delta g_{\ell}}=K =i}} A_i + \sum_{\underset{i \neq j, \: (i,j)  \in \nset^2}{\abs{\Delta g_{\ell}}=i,\: K =j}}  B_{ij}.
\end{align}
\end{small}
Using Assumption \ref{assumption1_high_dim} (a), we have 
\begin{small}
\begin{equation}\label{eq:A_i_terms}
0 \le \frac{\sum_{\underset{ i \in \nset  \setminus \{0,1\}}{\abs{\Delta g_{\ell}(T)}=K =i}} A_i}{A_1} \le  \underset{\underset{\Delta t_{\ell} \rightarrow 0}{\longrightarrow 0 }}{\underbrace{\sum_{i \in \nset  \setminus \{0,1\}} i ^{p}  \Delta t_{\ell}^{(i-1) p \delta}}}.
\end{equation}
\end{small}
Now, let us examine the second sum in the  right-hand side of \eqref{eq:kurt_exapanded1}. First, observe that $B_{0j}=0, \: \forall j \ge 1$ and $B_{i0}=0, \: \forall i \ge 1$. Although the first observation is clear, we need to explain the second observation, which is mainly due to the fact that  $\bar{\pi}\left( \abs{\Delta g_{\ell}(T)}=i, K= 0 ; \left(\mathcal{F}_{N_{\ell}-1}, \mathcal{I}^s_{\ell}=\mathcal{S}\right)\right)=0$, $\forall i \ge 1$. For the purpose of simplification, let us consider $g_{\ell}=\bar{X}_{\ell}$; then considering the first interval in the coarse level, and  using the coupling equation \eqref{eq: local_error_expression}, we have: i) At $t=0$: $\bar{X}_{\ell}(0)=\bar{X}_{\ell-1}(0)$ and  $\Delta a^1_{\ell-1,0}=0$. ii)  At $t=\Delta t_{\ell}$:  $\bar{X}_{\ell}(\Delta t_{\ell})=\bar{X}_{\ell-1}(\Delta t_{\ell})$  and $\Delta a^2_{\ell-1,0}=a(\bar{X}_{\ell}(\Delta t_{\ell}))-a(\bar{X}_{\ell-1}(0))$. iii) At $t=t_1=2\Delta t_{\ell}$: if $\Delta a^2_{\ell-1,0}=0$, then we simulate this step under the old measure and consequently  we will have $\bar{X}_{\ell}(t_1)=\bar{X}_{\ell-1}(t_1)$  otherwise if $\Delta a^2_{\ell-1,0} \neq 0$, then we  simulate this step under the  IS measure, but since $j=0$, then we will have $\bar{X}_{\ell}(t_1)=\bar{X}_{\ell-1}(t_1)$. Therefore, in both scenarios, we will have the same situation at the start, $t_0=0$. Therefore, we conclude that  $\bar{\pi}\left( \abs{\Delta g_{\ell}(T)}=i, K= 0 ; \left(\mathcal{F}_{N_{\ell}-1}, \mathcal{I}^s_{\ell}=\mathcal{S}\right)\right)=0$, $\forall \: i \ge 1$ and $B_{i0}=0$, $\forall \: i \ge 1$.

Then, using Assumptions \ref{assumption1_high_dim} (b) and  \ref{assumption1_high_dim} (c), we obtain
\begin{small}
\begin{align}\label{eq:B_ij terms}
0 \le \frac{\sum_{\underset{i \neq j, \: (i,j)  \in \nset^2}{\abs{\Delta g_{\ell} (T)}=i,\: K =j}}  B_{ij}}{A_1} & \le \frac{ \sum_{i \neq j, \:  1 \le i,j} i^p \Delta t_{\ell}^{j p\delta } \bar{\pi}_{\ell} \left\lbrace \abs{\Delta g_{\ell} (T)}= i ; \left(\mathcal{F}_{N_{\ell}-1}, \mathcal{I}^s_{\ell}=\mathcal{S}\right)  \right\rbrace}{\left(\Delta t_{\ell}^{p \delta} \right) e^{- \left( \Delta t_{\ell}^{1-\delta}  \Delta a_{\ell,n^\ast} \right)} \left(\Delta t_{\ell}^{1-\delta}  \Delta a_{\ell,n^\ast}\right)  \left(1+\ordo{1}\right)}\nonumber\\
& \le   \frac{ \sum_{i \neq j, \:  1 \le i,j} \eta_{i,\ell} i^p \Delta t_{\ell}^{j p\delta } \Delta t_{\ell}^{i (1-\delta)} }{\left(\Delta t_{\ell}^{p \delta} \right) e^{- \left( \Delta t_{\ell}^{1-\delta}  \Delta a_{\ell,n^\ast} \right)}  \left(\Delta t_{\ell}^{1-\delta} \Delta a_{\ell,n^\ast}\right)  \left(1+\ordo{1}\right)}\nonumber\\
&=    \underset{\underset{\Delta t_{\ell} \rightarrow 0}{\longrightarrow0}} {\underbrace{    \left(1+\ordo{1}\right)^{-1} \left( e^{ \left( \Delta t_{\ell}^{1-\delta}  \Delta a_{\ell,n^\ast} \right)} \Delta a_{\ell,n^\ast}^{-1} \sum_{i \neq j, \:  1 \le i,j} \eta_{i,\ell} i^p \Delta t_{\ell}^{p\delta ( j-1)} \Delta t_{\ell}^{(1-\delta) ( i-1)} \right) }}.
\end{align}
\end{small}
Therefore, using \eqref{eq:kurt_exapanded1}, \eqref{eq:A_i_terms} and \eqref{eq:B_ij terms}, we  conclude  Lemma \ref{lemma: moments}, that is
\begin{small}
\begin{equation*}
\exptpibar{\abs{\Delta g_{\ell}}^p(T) L_{\ell}^p ; \left(\mathcal{F}_{N_{\ell}-1}, \mathcal{I}^s_{\ell}=\mathcal{S}\right)}=  \Delta t_{\ell}^{(p -1)\delta+1}  \left(\Delta a_{\ell,n^\ast}\right)  e^{p(\Delta t_{\ell}^{1-\delta} -\Delta t_{\ell})  \sum_{n \in \mathcal{S}} \Delta a_{\ell,n}}  e^{- \left( \Delta t_{\ell}^{1-\delta}  \Delta a_{\ell,n^\ast} \right)}  \left(1+h_{p,\ell}\right),
\end{equation*}
\end{small}
such that $h_{p,\ell} \underset{\Delta t_{\ell} \rightarrow 0}{\longrightarrow 0}$.
\end{proof}
\begin{proof}[Proof of Theorem \ref{thm:kurtosis_high_dim}]\label{proof:kurtosis_one_dim}
Let $0 \le  \delta < 1$. In the first step of the proof, we want to show that 
\begin{equation*}
\kappa_{\ell}:=\frac{\exptpibar{\left(Y_{\ell}-\exptpibar{Y_{\ell}}\right)^4}}{\left(\text{Var}_{\bar{\pi}_{\ell}}\left[Y_{\ell}\right]\right)^2} \underset{\Delta t_{\ell} \rightarrow 0}{\sim}  \frac{\exptpibar{Y_{\ell}^4}}{\left(\exptpibar{Y_{\ell}^2}\right)^2} 
\end{equation*}
Let us first show that $\text{Var}_{\bar{\pi}_{\ell}}\left[Y_{\ell}\right]  \underset{\Delta t_{\ell} \rightarrow 0}{\sim} \exptpibar{Y_{\ell}^2}$. In fact,
\begin{align*}
\frac{\text{Var}_{\bar{\pi}_{\ell}}\left[Y_{\ell}\right] }{\exptpibar{Y_{\ell}^2}}=\frac{\exptpibar{Y_{\ell}^2}- \left(\exptpibar{Y_{\ell}}\right)^2}{\exptpibar{Y_{\ell}^2}}=1- \frac{ \left(\exptpibar{Y_{\ell}}\right)^2}{\exptpibar{Y_{\ell}^2}}.
\end{align*}
Therefore,  we need to show that $I_1:=\frac{ \left(\exptpibar{Y_{\ell}}\right)^2}{\exptpibar{Y_{\ell}^2}} \underset{\Delta t_{\ell}\rightarrow 0}\longrightarrow 0$. 

Due to the order one weak error convergence, there exists a constant $d_1>0$ such that  $\left(\exptpibar{Y_{\ell}}\right)\le d_1 \Delta t_{\ell}$. Therefore, using Lemma \ref{lemma: moments} and Assumption \ref{assumption2_high_dim}, we obtain
\begin{small}
\begin{align*}
0 \le I_1\le \frac{d_1^2 \Delta t_{\ell}^2}{\exptpibar{Y_{\ell}^2}}&=  \frac{d_1^2 \Delta t_{\ell}^2}{\exptpibar{\exptpibar{Y_{\ell}^2; \left(\mathcal{F}_{N_{\ell}}, \mathcal{I}^s_{\ell}=\mathcal{S}\right)}}}\\
 &=\frac{d_1^2 \Delta t_{\ell}^2}{\exptpibar{ \left(\Delta t_{\ell}^{\delta+1} \right)  e^{2(\Delta t_{\ell}^{1-\delta} -\Delta t_{\ell})  \sum_{n \in \mathcal{S}} \Delta a_{\ell,n}}  e^{- \left( \Delta t_{\ell}^{1-\delta}  \Delta a_{\ell,n^\ast} \right)} \left(\Delta a_{\ell,n^\ast}\right) \left(1+ h_{2,\ell}\right)}}\\
 &\le \frac{d_1^2 \Delta t_{\ell}^{1-\delta}}{\exptpibar{   e^{- \left( \Delta t_{\ell}^{1-\delta}  \Delta a_{\ell,n^\ast} \right)} \Delta a_{\ell,n^\ast} }}\le  \frac{d_1^2 \Delta t_{\ell}^{1-\delta}}{C_1} \underset{\Delta t_{\ell}\rightarrow 0}\longrightarrow 0.
\end{align*}
\end{small}
 Therefore, we conclude that
\begin{equation}\label{eq:var_equiv}
  \text{Var}_{\bar{\pi}_{\ell}}\left[Y_{\ell}\right]  \underset{\Delta t_{\ell} \rightarrow 0}{\sim} \exptpibar{Y_{\ell}^2}.
\end{equation}  
Now, let us  show that $\exptpibar{\left(Y_{\ell}-\exptpibar{Y_{\ell}}\right)^4} \underset{\Delta t_{\ell} \rightarrow 0}{\sim} \exptpibar{Y_{\ell}^4}$. In fact,
\begin{align*}
\frac{\exptpibar{\left(Y_{\ell}-\exptpibar{Y_{\ell}}\right)^4}}{\exptpibar{Y_{\ell}^4}}&=\frac{\exptpibar{Y_{\ell}^4}-4  \exptpibar{Y_{\ell}^3} \exptpibar{Y_{\ell}}+ 6 \exptpibar{Y_{\ell}^2} \left(\exptpibar{Y_{\ell}}\right)^2 -3  \left(\exptpibar{Y_{\ell}}\right)^4}{\exptpibar{Y_{\ell}^4}}\\
&=1-4\frac{\exptpibar{Y_{\ell}^3} \exptpibar{Y_{\ell}}}{\exptpibar{Y_{\ell}^4}}+ 6\frac{\exptpibar{Y_{\ell}^2} \left(\exptpibar{Y_{\ell}}\right)^2}{\exptpibar{Y_{\ell}^4}}- 3 \frac{\left(\exptpibar{Y_{\ell}}\right)^4}{\exptpibar{Y_{\ell}^4}}\\
&=1-4 I_2+ 6 I_3- 3 I_4
\end{align*}
Therefore,  we need to show that $I_2, I_3, I_4 \underset{\Delta t_{\ell}\rightarrow 0}\longrightarrow 0$. 

Using Lemma \ref{lemma: moments} and Assumption \ref{assumption2_high_dim}, we obtain
\begin{small}
\begin{align*}
0 \le I_4\le \frac{d_1^4 \Delta t_{\ell}^4}{\exptpibar{Y_{\ell}^4}}&=  \frac{d_1^4   \Delta t_{\ell}^4}{\exptpibar{\exptpibar{Y_{\ell}^4; \left(\mathcal{F}_{N_{\ell}}, \mathcal{I}^s_{\ell}=\mathcal{S}\right)}}}\\
 &=\frac{d_1^4 \Delta t_{\ell}^4}{\exptpibar{\left(\Delta t_{\ell}^{3\delta+1} \right) e^{4(\Delta t_{\ell}^{1-\delta} -\Delta t_{\ell})  \sum_{n \in \mathcal{S}} \Delta a_{\ell,n}}  e^{- \left( \Delta t_{\ell}^{1-\delta}  \Delta a_{\ell,n^\ast} \right)} \left( \Delta a_{\ell,n^\ast}\right) \left(1+ h_{4,\ell}\right)}}\\
 &\le \frac{d_1^4 \Delta t_{\ell}^{3(1-\delta)}}{\exptpibar{   e^{- \left( \Delta t_{\ell}^{1-\delta}  \Delta a_{\ell,n^\ast} \right)}   \Delta a_{\ell,n^\ast} }}\le \frac{d_1^4 \Delta t_{\ell}^{3(1-\delta)}}{C_1}\underset{\Delta t_{\ell}\rightarrow 0}\longrightarrow 0.
\end{align*}
\end{small}
Similarly for $I_2$, using Lemma \ref{lemma: moments} and Assumptions \ref{assumption2_high_dim} and \ref{assump:integrability_assump}, we obtain
\begin{small}
\begin{align*}
0 \le I_2\le \frac{d_1 \Delta t_{\ell} \exptpibar{\abs{Y_{\ell}}^3} }{\exptpibar{Y_{\ell}^4}}&=  \frac{d_1 \Delta t_{\ell} \exptpibar{\exptpibar{Y_{\ell}^3; \left(\mathcal{F}_{N_{\ell}}, \mathcal{I}^s_{\ell}=\mathcal{S}\right)}}}{\exptpibar{\exptpibar{Y_{\ell}^4; \left(\mathcal{F}_{N_{\ell}}, \mathcal{I}^s_{\ell}=\mathcal{S}\right)}}}\\
 &=\frac{d_1 \Delta t_{\ell}  \exptpibar{\left(\Delta t_{\ell}^{2 \delta+1} \right)  e^{3(\Delta t_{\ell}^{1-\delta} -\Delta t_{\ell})  \sum_{n \in \mathcal{S}} \Delta a_{\ell,n}} e^{- \left( \Delta t_{\ell}^{1-\delta}  \Delta a_{\ell,n^\ast} \right)} \left( \Delta a_{\ell,n^\ast}\right) \left(1+ h_{3,\ell}\right)}}{\exptpibar{  \left(\Delta t_{\ell}^{3 \delta+1} \right) e^{4(\Delta t_{\ell}^{1-\delta} -\Delta t_{\ell})  \sum_{n \in \mathcal{S}} \Delta a_{\ell,n}} e^{- \left(  \Delta a_{\ell,n^\ast} \right)} \left(\Delta t_{\ell}^{1-\delta}  \Delta a_{\ell,n^\ast}\right) \left(1+h_{4,\ell}\right)}}\\
 &\le \frac{d_1 \Delta t_{\ell}^{1-\delta}  \exptpibar{ e^{3(\Delta t_{\ell}^{1-\delta} -\Delta t_{\ell})  \sum_{n \in \mathcal{S}} \Delta a_{\ell,n}}   \Delta a_{\ell,n^\ast}\left(1+ h_{3,\ell}\right)}}{\exptpibar{   e^{- \left( \Delta t_{\ell}^{1-\delta}  \Delta a_{\ell,n^\ast} \right)}   \Delta a_{\ell,n^\ast} }}\\
 &\le C \Delta t_{\ell}^{1-\delta} \underset{\Delta t_{\ell}\rightarrow 0}{\longrightarrow} 0.
\end{align*}
\end{small}
Finally, for $I_3$, using Lemma \ref{lemma: moments} and Assumptions \ref{assumption2_high_dim} and \ref{assump:integrability_assump}, we obtain
\begin{small}
\begin{align*}
0 \le I_3\le \frac{d_1^2 \Delta t_{\ell}^2 \exptpibar{\abs{Y_{\ell}}^2} }{\exptpibar{Y_{\ell}^4}}&=  \frac{d_1^2 \Delta t_{\ell}^2 \exptpibar{\exptpibar{Y_{\ell}^2; \left(\mathcal{F}_{N_{\ell}}, \mathcal{I}^s_{\ell}=\mathcal{S}\right)}}}{\exptpibar{\exptpibar{Y_{\ell}^4; \left(\mathcal{F}_{N_{\ell}}, \mathcal{I}^s_{\ell}=\mathcal{S}\right)}}}\\
 &=\frac{d_1^2 \Delta t_{\ell}^2  \exptpibar{ \left(\Delta t_{\ell}^{ \delta+1} \right) e^{2(\Delta t_{\ell}^{1-\delta} -\Delta t_{\ell})  \sum_{n \in \mathcal{S}} \Delta a_{\ell,n}}  e^{- \left( \Delta t_{\ell}^{1-\delta}  \Delta a_{\ell,n^\ast} \right)} \left(  \Delta a_{\ell,n^\ast}\right) \left(1+ h_{2,\ell}\right)}}{\exptpibar{\left(\Delta t_{\ell}^{3 \delta+1} \right) e^{4(\Delta t_{\ell}^{1-\delta} -\Delta t_{\ell})  \sum_{n \in \mathcal{S}} \Delta a_{\ell,n}}  e^{- \left( \Delta t_{\ell}^{1-\delta}  \Delta a_{\ell,n^\ast} \right)} \left(  \Delta a_{\ell,n^\ast}\right) \left(1+ h_{4,\ell}\right)}}\\
 &\le \frac{d_1^2 \Delta t_{\ell}^{2(1-\delta)} \exptpibar{ e^{2(\Delta t_{\ell}^{1-\delta} -\Delta t_{\ell})  \sum_{n \in \mathcal{S}} \Delta a_{\ell,n}}   \Delta a_{\ell,n^\ast} \left(1+ h_{2,\ell}\right)} }{\exptpibar{  e^{- \left( \Delta t_{\ell}^{1-\delta}  \Delta a_{\ell,n^\ast} \right)}  \Delta a_{\ell,n^\ast} }}\\
 &\le \widetilde{C} \Delta t_{\ell}^{2(1-\delta)}  \underset{\Delta t_{\ell}\rightarrow 0}{\longrightarrow} 0.
\end{align*}
\end{small}
Therefore, we conclude that
\begin{equation}\label{eq:forth_moment_equiv}
 \exptpibar{\left(Y_{\ell}-\exptpibar{Y_{\ell}}\right)^4} \underset{\Delta t_{\ell} \rightarrow 0}{\sim} \exptpibar{Y_{\ell}^4}.
\end{equation}
Finally, using \eqref{eq:var_equiv}, \eqref{eq:forth_moment_equiv}, Lemma \ref{lemma: moments} and Assumptions \ref{assumption2_high_dim} and \ref{assump:integrability_assump}, we obtain 
\begin{small}
\begin{align}\label{eq:kurtosis_equiv}
\kappa_{\ell}:=\frac{ \exptpibar{\left(Y_{\ell}-\expt{Y_{\ell}}\right)^4}}{\left(\text{Var}_{\bar{\pi}_{\ell}}\left[Y_{\ell}\right]\right)^2} &\underset{\Delta t_{\ell} \rightarrow 0}{\sim}  \frac{ \exptpibar{Y_{\ell}^4}}{\left( \exptpibar{Y_{\ell}^2}\right)^2}\nonumber\\
 &=\frac{\exptpibar{ \exptpibar{Y_{\ell}^4(T)  ; \left(\mathcal{F}_{N_{\ell}}, \mathcal{I}^s_{\ell}=\mathcal{S}\right)}}}{\left( \exptpibar{\exptpibar{Y_{\ell}^2(T)  ; \left(\mathcal{F}_{N_{\ell}}, \mathcal{I}^s_{\ell}=\mathcal{S}\right)}}\right)^2} \nonumber\\
&= \Delta t_{\ell}^{\delta-1} \frac{\exptpibar{ e^{4(\Delta t_{\ell}^{1-\delta} -\Delta t_{\ell})  \sum_{n \in \mathcal{S}} \Delta a_{\ell,n}}  e^{- \left( \Delta t_{\ell}^{1-\delta}  \Delta a_{\ell,n^\ast} \right)}   \Delta a_{\ell,n^\ast} \left(1+ h_{4,\ell}\right)}}{\left(\exptpibar{ e^{2(\Delta t_{\ell}^{1-\delta} -\Delta t_{\ell})  \sum_{n \in \mathcal{S}} \Delta a_{\ell,n}}  e^{- \left( \Delta t_{\ell}^{1-\delta}  \Delta a_{\ell,n^\ast} \right)}  \Delta a_{\ell,n^\ast} \left(1+ h_{2,\ell}\right)}\right)^2} \nonumber\\
&=\Ordo{\Delta t_{\ell}^{\delta-1}}.
\end{align}
\end{small}
\end{proof}
\begin{proof}[Proof of Theorem \ref{thm:variance_high_dim}]
Let $0 <  \delta < 1$. Then, using \eqref{eq:var_equiv}, Lemma \ref{lemma: moments} and Assumptions \ref{assumption2_high_dim} and \ref{assump:integrability_assump}, we obtain  
\begin{align}\label{eq:var_cond}
\text{Var}_{\bar{\pi}_{\ell}}\left[ Y_{\ell} \right]  \underset{\Delta t_{\ell} \rightarrow 0}{\sim} \exptpibar{Y_{\ell}^2}&= \exptpibar{\exptpibar{ Y_{\ell}^2; \left(\mathcal{F}_{N_{\ell}-1}, \mathcal{I}^s_{\ell}=\mathcal{S}\right)}}  \nonumber\\
&=\exptpibar{ \left(\Delta t_{\ell}^{\delta+1} \right)  e^{2(\Delta t_{\ell}^{1-\delta} -\Delta t_{\ell})  \sum_{n \in \mathcal{S}} \Delta a_{\ell,n}}  e^{- \left( \Delta t_{\ell}^{1-\delta}  \Delta a_{\ell,n^\ast} \right)} \left(\Delta a_{\ell,n^\ast}\right) \left(1+ h_{2,\ell}\right)}\nonumber\\
&=\Ordo{\Delta t_{\ell}^{1+\delta}}.
\end{align}
\end{proof}
\section{Numerical Evidence of   Assumption  \ref{assumption1_high_dim}}\label{appendix:Numerical evidence of   Assumptions}
In Figures \ref{fig:number_k_n>0_decay}, \ref{fig:number_k_n>0_example2} and \ref{fig:number_k_n>0_example4}, we plot the histograms, for Examples \ref{exp:decay}, \ref{exp:Gene transcription and translation} and \ref{exp:Michaelis–Menten enzyme kinetics}, with $\delta=0.5$,  corresponding to   $\# \{ \text{IS steps : s.t.}\:  \bar{K}:=\sum_{j \in \mathcal{J}_1} \sum_{n \in \mathcal{S}_j}k^j_n>0\}$, that is the number of times where we perform IS and  succeeded to  separate the two paths. These Figures show that our assumption \ref{assumption1_high_dim} (c)  is valid since for small values of $\Delta t_{\ell}$, we have at most one jump created by IS such that it separates the two paths. 

\FloatBarrier
\begin{figure}[h!]
	\centering
%	\begin{subfigure}{0.4\textwidth}
%		\centering
%		\includegraphics[width=0.9\linewidth]{./figures/number_k_n>0/decay/number_kn_l_2}
%		\caption{}
%		\label{fig:sub3}
%	\end{subfigure}
	\begin{subfigure}{0.4\textwidth}
		\centering
		\includegraphics[width=0.9\linewidth]{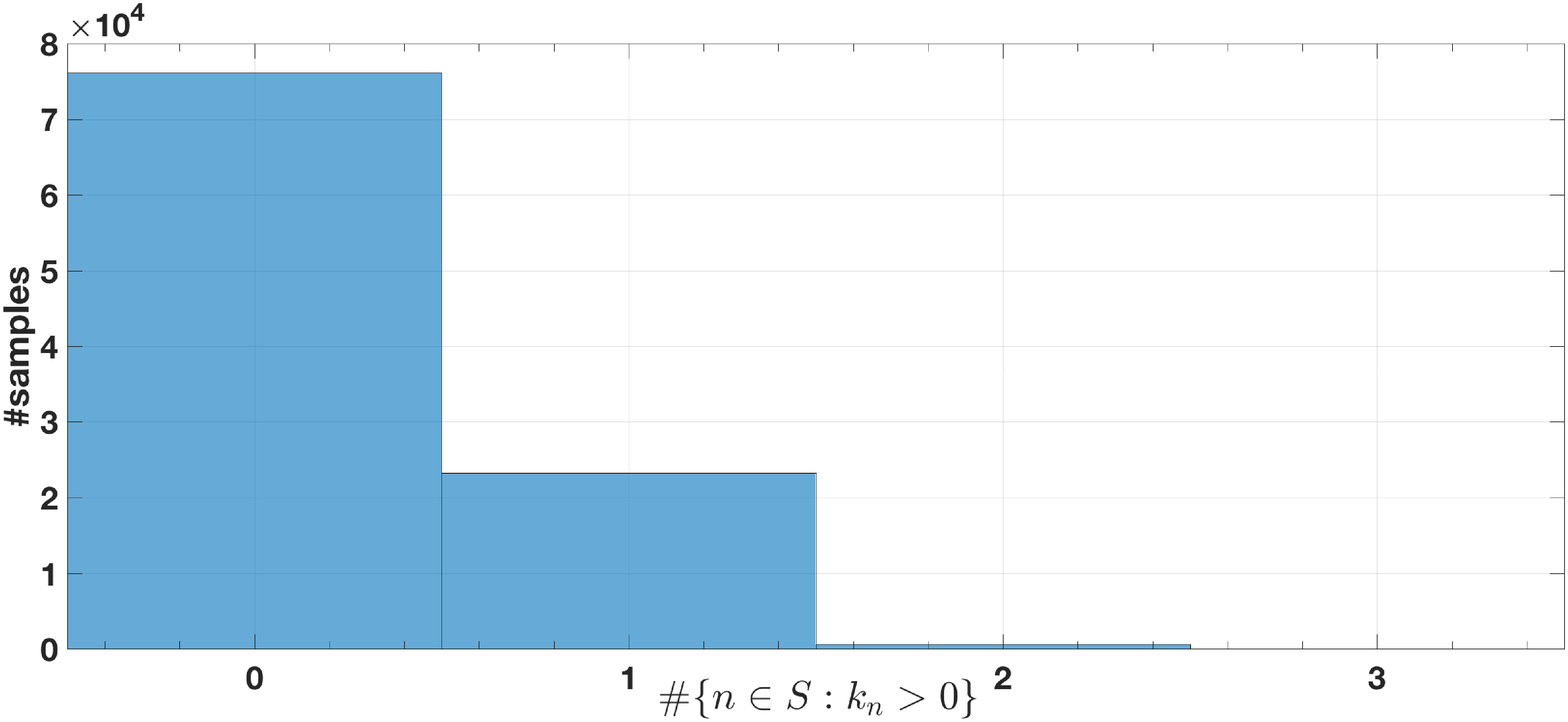}
		\caption{}
		\label{fig:sub4}
	\end{subfigure}
	\begin{subfigure}{0.4\textwidth}
		\centering
		\includegraphics[width=0.9\linewidth]{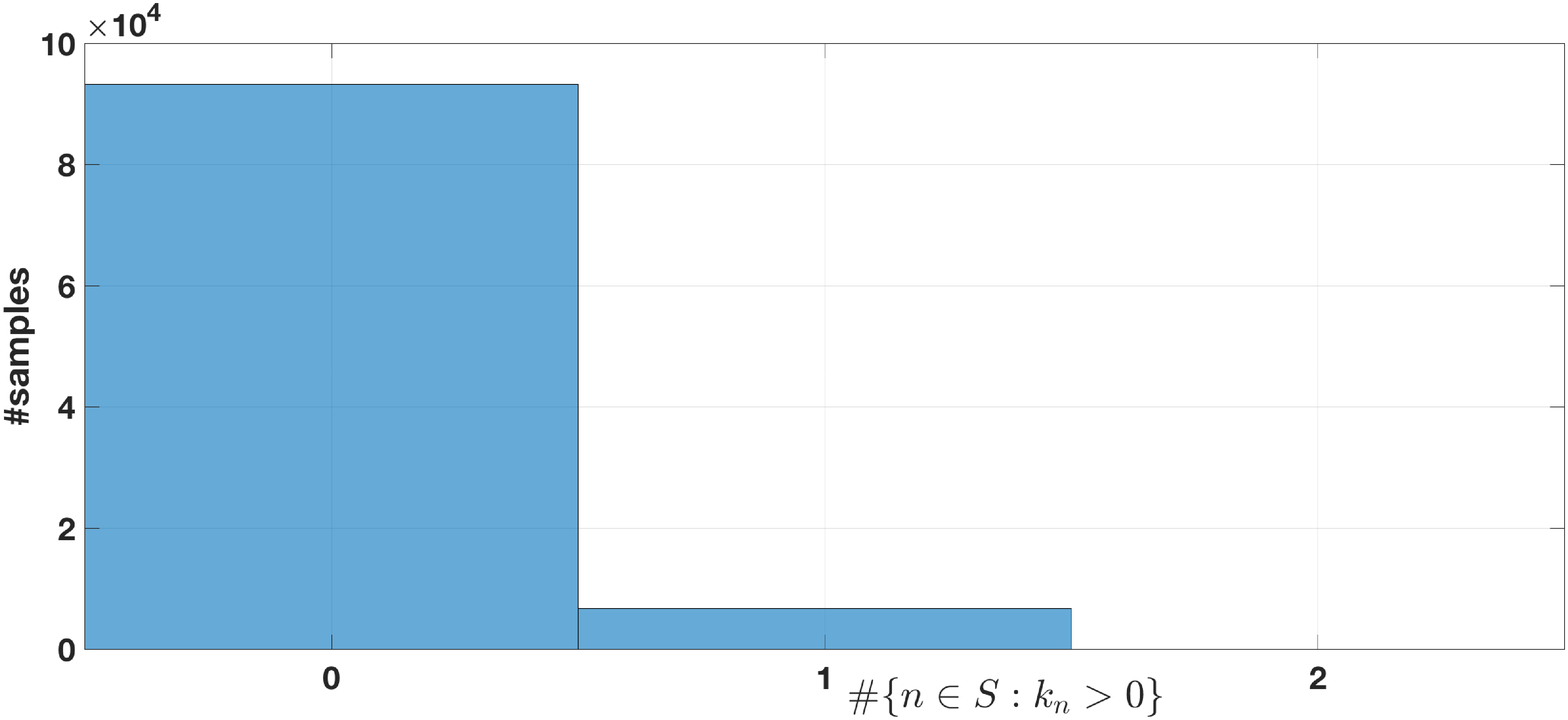}
		\caption{}
		\label{fig:sub4}
	\end{subfigure}
	\caption{ Example \ref{exp:decay} with IS (with $\delta=0.5$): Histogram of  $\# \{n \in \mathcal{S}: k_n>0\}$, for number of samples $M_{\ell}=10^5$. a)  $\ell=6$. b) $\ell=10$ }
	\label{fig:number_k_n>0_decay}
\end{figure}

\FloatBarrier
\begin{figure}[h!]
	\centering
	\begin{subfigure}{0.4\textwidth}
		\centering
		\includegraphics[width=0.9\linewidth]{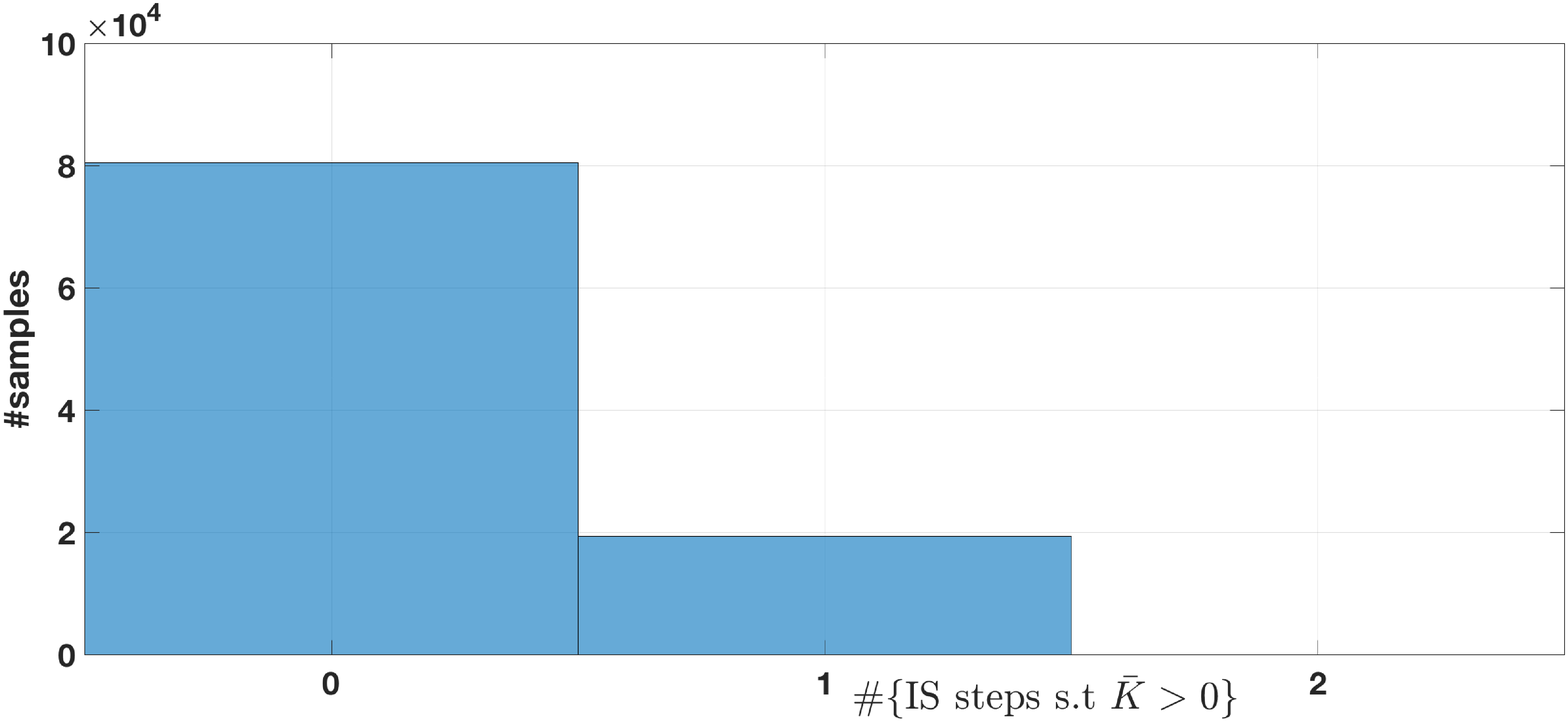}
		\caption{}
		\label{fig:sub3}
	\end{subfigure}
	\begin{subfigure}{0.4\textwidth}
		\centering
		\includegraphics[width=0.9\linewidth]{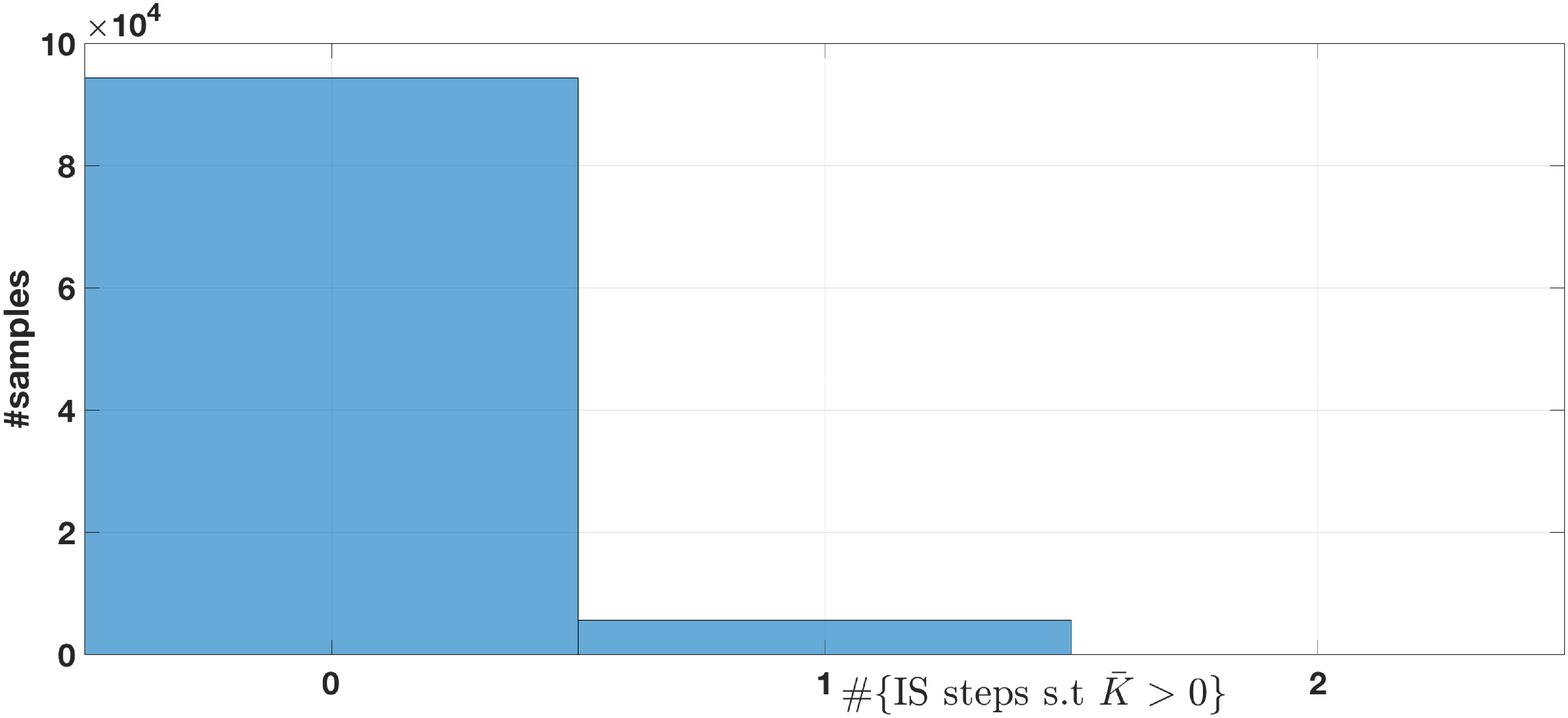}
		\caption{}
		\label{fig:sub4}
	\end{subfigure}
	\caption{ Example \ref{exp:Gene transcription and translation} with IS  (with $\delta=0.5$): Histogram of  $\# \{ \text{IS steps : s.t.}\:  \bar{K}:=\sum_{j \in \mathcal{J}_1}  \sum_{n \in \mathcal{S}_j}k^j_n>0\}$, for number of samples $M_{\ell}=10^5$.  a) $\ell=4$. b)  $\ell=8$.  }
	\label{fig:number_k_n>0_example2}
\end{figure}
\FloatBarrier
\begin{figure}[h!]
	\centering
%	\begin{subfigure}{0.4\textwidth}
%		\centering
%		\includegraphics[width=0.9\linewidth]{./figures/number_k_n>0/example4/number_kn_l_2}
%		\caption{}
%		\label{fig:sub3}
%	\end{subfigure}
	\begin{subfigure}{0.4\textwidth}
		\centering
		\includegraphics[width=0.9\linewidth]{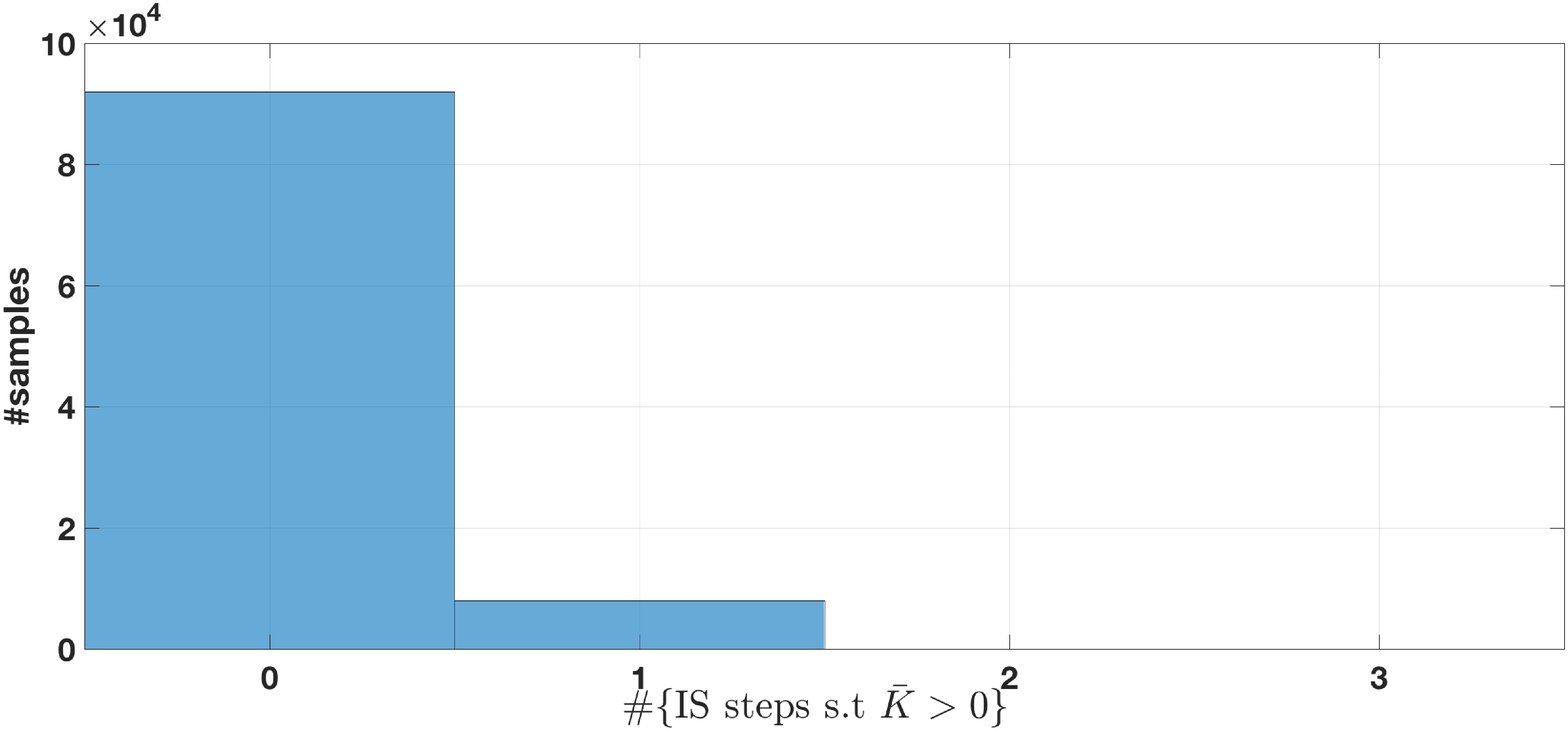}
		\caption{}
		\label{fig:sub4}
	\end{subfigure}
	\begin{subfigure}{0.4\textwidth}
		\centering
		\includegraphics[width=0.9\linewidth]{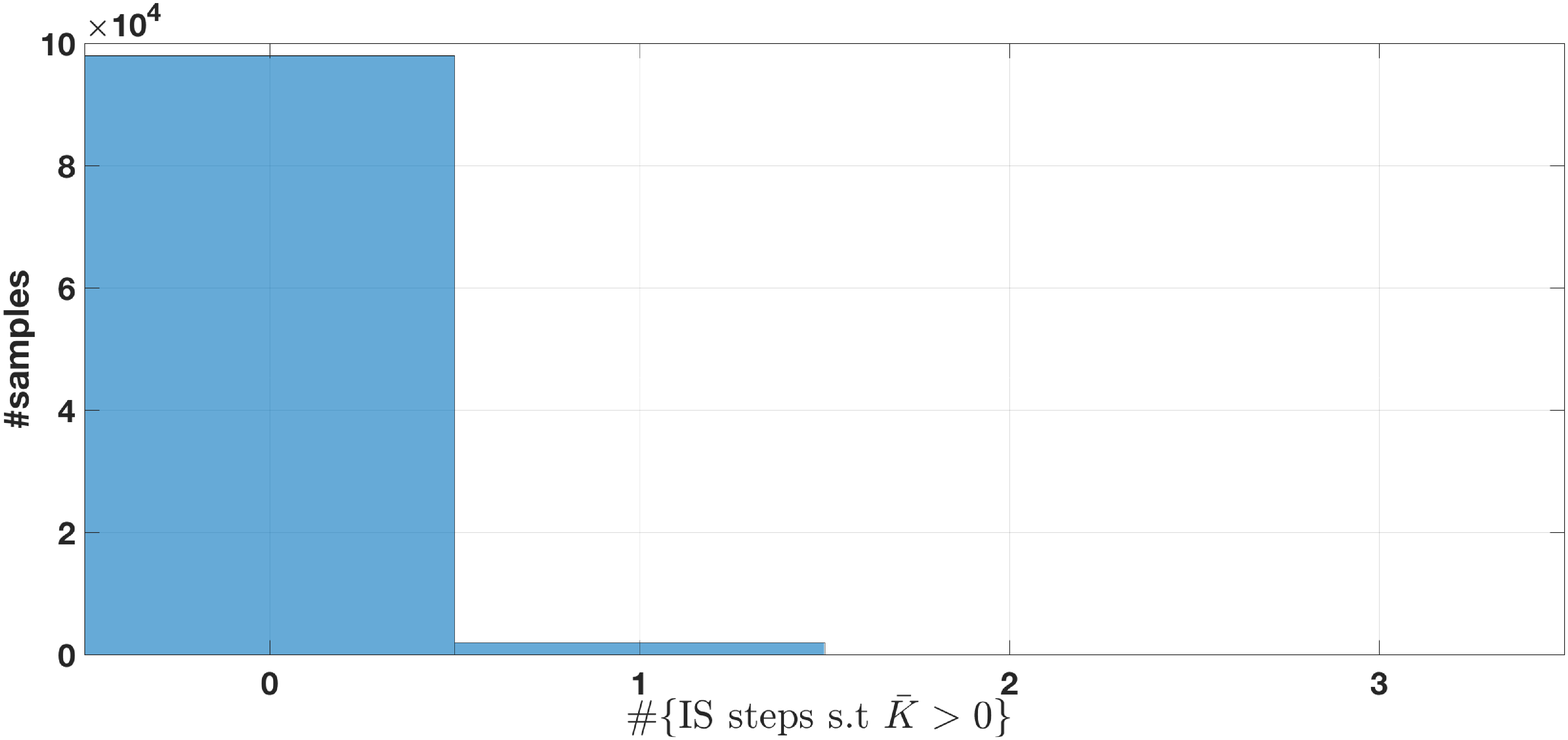}
		\caption{}
		\label{fig:sub4}
	\end{subfigure}
	\caption{ Example \ref{exp:Michaelis–Menten enzyme kinetics}  with IS  (with $\delta=0.5$): Histogram of  $\# \{ \text{IS steps : s.t.}\:  \bar{K}:=\sum_{j \in \mathcal{J}_1}  \sum_{n \in \mathcal{S}_j}k^j_n>0\}$, for number of samples $M_{\ell}=10^5$.  a)  $\ell=6$. b) $\ell=10$ }
	\label{fig:number_k_n>0_example4}
\end{figure}
\FloatBarrier

\end{document}